\renewcommand{\MR}[1]{} \renewcommand{\PrintDOI}[1]{}
\newcommand{\arxiv}[1]{\href{http://arxiv.org/abs/#1}{arXiv:\nolinkurl{#1}}}
\numberwithin{equation}{section}
\newtheorem{thm}{Theorem}[section]
\newtheorem{prop}[thm]{Proposition}
\newtheorem{lem}[thm]{Lemma}
\newtheorem{cor}[thm]{Corollary}
\newtheorem{conj}[thm]{Conjecture}
\newtheorem{alphatheorem}{Theorem}
\theoremstyle{definition} 
\newtheorem{dfn}[thm]{Definition}
\theoremstyle{remark}
\newtheorem{rem}[thm]{Remark}
\newcommand{\beq}{\begin{equation}}
\newcommand{\eeq}{\end{equation}}
\newcommand{\be}{\begin{equation*}}
\newcommand{\ee}{\end{equation*}}
\newcommand{\bC}{\mathbb{C}}
\newcommand{\bZ}{\mathbb{Z}}
\newcommand{\bN}{\mathbb{N}}
\newcommand{\mc}{\mathcal}
\newcommand{\sfh}{\mathsf{h}}
\newcommand{\sfb}{\mathsf{b}}
\newcommand{\sfs}{\mathsf{s}}
\newcommand{\hf}{\tfrac12}
\newcommand{\bb}{\mathbf{b}}
\newcommand{\bh}{\mathbf{h}}
\newcommand{\g}{\mathfrak{g}}
\newcommand{\gl}{\mathfrak{gl}}
\newcommand{\fksl}{\mathfrak{sl}}
\newcommand{\fkS}{\mathfrak{S}}
\newcommand{\gr}{{\mathrm{gr}}}
\newcommand{\tl}{\tilde}
\newcommand{\gge}{\geqslant}
\newcommand{\lle}{\leqslant}
\newcommand{\la}{\lambda}
\newcommand{\bla}{\bm\lambda}
\newcommand{\I}{{\mathbb I}}
\newcommand{\Y}{{\mathscr{Y}}}
\newcommand{\Yi}{{^\imath\mathscr{Y}}}
\newcommand{\YiJ}{{^\imath\mathscr{Y}_{\jmath}}}
\newenvironment{nouppercase}{%
  \renewcommand{\uppercasenonmath}[1]{}}{}
\begin{document}
\pagestyle{myheadings}
\setcounter{page}{1}

\title[Minimal presentation and coideal structure of twisted Yangians]{\Large Minimalistic presentation and coideal structure of twisted Yangians}

\author{Kang Lu}
\date{\today}
\address{
Shenzhen International Center for Mathematics and Department of Mathematics, Southern University of Science and Technology, Shenzhen, China\footnote{The author was previously at the University of Virginia when this research was initiated.}
}
\email{kanglu.math@outlook.com,luk@sustech.edu.cn}

\dedicatory{Dedicated to the memory of Chen-Ning Yang}   

\subjclass[2020]{Primary 17B37.}
\keywords{Twisted Yangians, Drinfeld presentation, Symmetric pairs, Coideal subalgebras}

\begin{abstract}
We introduce a minimalistic presentation for the twisted Yangians ${}^\imath\mathscr Y$ associated with split symmetric pairs (or Satake diagrams) introduced in \cite{LWZ25affine} via a Drinfeld type presentation. As applications, we establish an injective algebra homomorphism from ${}^\imath\mathscr Y$ to the Yangian $\mathscr Y$, thereby identifying ${}^\imath\mathscr Y$ as a right coideal subalgebra of $\mathscr Y$ and proving its isomorphism with the twisted Yangian in the $J$ presentation. Furthermore, we provide estimates for the Drinfeld generators of ${}^\imath\mathscr Y$ and describe their images under the coproduct in terms of the Drinfeld generators of $\mathscr Y$ under this identification.
\end{abstract}
\begin{nouppercase}	
\maketitle
\end{nouppercase}
\setcounter{tocdepth}{1}
\tableofcontents

\thispagestyle{empty}
\section{Introduction} 
\subsection{Background}
Yangians for $\gl_N$ first appeared in mathematical physics in the work of the St. Petersburg school in the late 1970s and early 1980s concerning the quantum inverse scattering method. Yangians associated with simple Lie algebras were introduced by Drinfeld \cite{Dri85} in the 1980s as a new class of quantum groups, arising naturally in the study of the Yang–Baxter equation and quantum integrable systems. The term ``Yangian" was introduced by Drinfeld in honor of C.N. Yang, who found the first nontrivial solution to Yang–Baxter equation. Yangians provide deep algebraic insights into representation theory, quantum integrable models, and symmetries in field theory, while their connections with geometry and combinatorics continue to inspire developments across several areas of mathematics.

Twisted Yangians are a family of important quantum algebras closely related to Yangians; they correspond to symmetric pairs $(\g,\g^{\theta})$, where $\mathfrak{g}$ is a finite-dimensional simple complex Lie algebra and $\mathfrak{g}^{\theta}$ is the fixed-point subalgebra of $\mathfrak{g}$ under an involution $\theta$. Twisted Yangians were first introduced by Olshanski \cite{Ols92} for symmetric pairs of types $\mathsf{AI}$ and $\mathsf{AII}$ via the R-matrix presentation. These twisted Yangians are closely related to classical Lie algebras of types $\mathsf{BCD}$ and their representations \cite{MNO96,Mo07}. Later, the R-matrix construction of twisted Yangians was extended to symmetric pairs of type $\mathsf{AIII}$ \cite{MR02} and to the remaining symmetric pairs of classical type \cite{GR16}. These algebras admit two complementary descriptions: on one hand, as abstract algebras governed by the reflection equation \cite{Che84,Skl88} together with additional constraints such as symmetry and unitary relations, and on the other hand, as coideal subalgebras inside the corresponding Yangians. Additionally, another family of twisted Yangians associated to arbitrary symmetric pairs was uniformly constructed in terms of Drinfeld's $J$ presentation \cite{Mac02}, arising as boundary remnants of Yangians in $1{+}1$-dimensional integrable field theories. Moreover, these algebras provide a homogeneous quantization of a Lie coideal structure for twisted current algebras of simple Lie algebras \cite{BR17}. These two family of twisted Yangians play a central role in quantum integrable systems with boundaries, integrable field theory, and AdS/CFT correspondence, see \cite{Skl88,Mac05,Reg24}, and they also serve as key ingredients in the study of fixed-point loci of quiver varieties \cite{Li19,Nak25} and affine Grassmannian slices \cite{LWW25}.

Recently, a Drinfeld type presentation of the twisted Yangian of type $\sf AI$ (split type $\sf A$) has been constructed in
\cite{LWZ25GD} by applying the Gauss decomposition for the generating matrices of twisted Yangians in the R-matrix presentation; such Drinfeld type presentations have been extended to twisted Yangians of
all split types \cite{LWZ25affine} and then to all quasi-split types \cite{LZ24}, by means of a degeneration procedure applied to the corresponding Drinfeld presentations of (quasi-)split affine $\imath$quantum groups \cite{LW21,mLWZ24,Zha22}.
However, an explicit identification of the  twisted Yangians in the R-matrix and  Drinfeld presentations has been established only for (quasi-)split type $\sf A$.

The Drinfeld presentation of twisted Yangians has proved to be a powerful tool, with applications to diverse areas including the study of Slodowy slices in classical Lie algebras \cite{TT24}, finite $\mathcal W$-algebras of classical types \cite{LPTTW25}, the geometry of the affine Grassmannian \cite{LWW25} (see also \cite{BPT25} for type $\mathsf{AI}$), and Coulomb branches of $3$-dimensional $\mathcal{N}=4$ gauge theories \cite{SSX25}.

\subsection{The problems}\label{sec:goal}
Let $\g$ be a finite-dimensional simple complex Lie algebra. Let $\Y$, $\Y_\jmath$, and $\Y_{\mathscr R}$ be the corresponding Yangians in Drinfeld, $J$, and R-matrix presentations, respectively. Let $\Yi$, $\YiJ$, and $\Yi_{\mathscr R}$ be the twisted Yangians associated to split type Satake diagrams (which coincide with Dynkin diagrams) in Drinfeld, $J$, and R-matrix presentations, respectively. These  diagrams correspond to symmetric pairs $(\g,\g^{\omega})$, where $\omega$ is the Chevalley involution.

The equivalence between $\Y$ and $\Y_\jmath$ is established in \cite{Dri87,GRW19} for all simple Lie algebras while isomorphisms between $\Y_\jmath$ and $\Y_{\mathscr R}$ are obtained in  \cite{Wen18}. Explicit isomorphisms between $\Y$ and $\Y_{\mathscr R}$ are established in \cite{BK05} for type $\mathsf A$ and in \cite{JLM18,GRW19} for types $\mathsf{BCD}$.

It is a natural and important question to establish equivalences between twisted Yangians in different presentations. The recipe to establish the isomorphism between $\YiJ$ and $\Yi_{\mathscr R}$ (classical type from \cite{Ols92,MR02,GR16}) is more or less clear, see \cite[Corollary 3.19]{CGM14} for the case of type $\mathsf{AIII}$ and cf. \cite[\S11]{Kolb14} for affine $\imath$quantum groups of types $\mathsf{AI}$ and $\mathsf{AII}$. Note that both $\YiJ$ and $\Yi_{\mathscr R}$ are subalgebras of $\Y$. One can show that $\YiJ$ is contained in $\Yi_{\mathscr R}$ by checking the generators of $\YiJ$ are in $\Yi_{\mathscr R}$, and hence they must be equal as their associated graded are equal. Alternatively, both $\YiJ$ and $\Yi_{\mathscr R}$ are coideal subalgebras of $\Y$ and homogeneous quantizations of a Lie coideal structure for the twisted current algebras, see \cite[\S3.5]{GR16}. Thus it follows from a uniqueness of homogeneous quantization result for symmetric pairs from \cite{BR17} that $\YiJ$ and $\Yi_{\mathscr R}$ are isomorphic.

In this paper, we focus on twisted Yangians $\Yi$ and $\YiJ$ in Drinfeld and $J$ presentations. The goal of this paper is to address the following:
\begin{enumerate}
    \item identify the twisted Yangian $\Yi$ in the Drinfeld presentation as a coideal subalgebra of the Yangian $\Y$,
    \item relate the twisted Yangians $\Yi$ and $\YiJ$ in the Drinfeld and $J$ presentations,
    \item approximately express Drinfeld generators of twisted Yangians in terms of Drinfeld generators of Yangians,
    \item and estimate the coproduct of Drinfeld generators of twisted Yangians in terms of Drinfeld generators of Yangians.
\end{enumerate}
Let us explain the main results of the paper in more detail.

\subsection{Minimalistic presentation}
To realize the twisted Yangian $\Yi$ in Drinfeld presentation as a subalgebra of the Yangian $\Y$, we construct an injective algebra homomorphism from $\Yi$ to $\Y$. Verifying that this map preserves all defining relations of $\Yi$ is typically highly nontrivial. This obstacle can, however, be resolved by employing a minimalistic presentation in the sense of \cite{Lev93gen,GNW18}.

Roughly speaking, a minimalistic presentation for Yangian $\Y$ can be obtained as follows. Let $\xi_{i,r}$ and $x_{i,r}^\pm$ for $i\in \I$ and $r\in\bN$ be the Drinfeld generators of $\Y$ with its defining relations, where $\I$ is an index set for the nodes of the Dynkin diagram of $\g$ and the subindex $r$ can be understood as the degree of the corresponding generators. It is well known that $\Y$ is generated by the degree-0 and degree-1 elements $\xi_{i,r}$ and $x_{i,r}^\pm$ for $i\in \I$ and $r=0,1$. Then one only takes a subset of defining relations involving only these elements as the defining relations for the minimalistic presentation. It turns out the algebra defined by the minimalistic presentation is isomorphic to the Yangian \cite{Lev93gen,GNW18}. Note that for the special rank 1 case, an extra relation $\big[\xi_{i,1},[x_{i,1}^+,x_{i,1}^-]\big]=0$ should also be included, see \cite[\S2]{GNW18} for discussions. This relation was included in \cite{Lev93gen} for all types but turns out to be redundant except for type $\mathsf A_1$.

Such a minimalistic presentation involves only finitely many generators and significantly fewer relations, making it easier to verify than the original definition and thereby highly useful for applications. For example, it has been employed to construct an explicit isomorphism between the Drinfeld and $J$ presentations of Yangians \cite{GRW19}, as well as to define a coproduct structure for Yangians associated with Kac–Moody algebras \cite{GNW18}.

It is natural to expect that a similar minimalistic presentation also exists for twisted Yangians. Our first main result addresses this question affirmatively. 

Let $h_{i,2r+1},b_{i,r}$ for $i\in \I$ and $r\in\bN$ be the Drinfeld generators of the twisted Yangian $\Yi$ with the defining relations \eqref{ty0}--\eqref{fSerre3}. Then $\Yi$ is generated by the elements $h_{i,1},b_{i,0}$ for $i\in \I$, see Lemma \ref{lem:generate}.

\begin{alphatheorem}[Theorem \ref{thm:min-text}]\label{thm:min}
The twisted Yangian $\Yi$ is isomorphic to the algebra generated by $h_{i,1}$, $b_{i,0}$, $b_{i,1}$ for $i\in \I$ subject to only the relations \eqref{redhh}--\eqref{redbb} together with the finite Serre type relations \eqref{fSerre0}--\eqref{fSerre3}. If $\g$ is of type $\mathsf A_1$, $\mathsf B_2\cong\mathsf C_2$, or $\mathsf G_2$, then an additional relation \eqref{eq:add-rel} should be included for any single $i\in\I$.
\end{alphatheorem}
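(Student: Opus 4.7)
The plan is to adapt the Levendorskii--Guay--Nakajima--Wendlandt strategy (see \cite{Lev93gen, GNW18, GRW19}) from the Yangian $\Y$ to the coideal setting of $\Yi$. Let $\wtl{\Yi}$ denote the algebra defined by the minimalistic presentation in the theorem. Since every relation of $\wtl{\Yi}$ is by construction a relation of $\Yi$ involving only the minimalistic generators (and since, in the exceptional rank-$1$ types, \eqref{eq:add-rel} is also a relation of $\Yi$), sending each generator of $\wtl{\Yi}$ to its namesake in $\Yi$ defines a canonical algebra homomorphism $\phi\colon\wtl{\Yi}\to\Yi$. By Lemma~\ref{lem:generate}, the elements $h_{i,1},b_{i,0}$ already generate $\Yi$, so $\phi$ is surjective; the content of the theorem is the injectivity.

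The task is then to construct an inverse $\psi\colon\Yi\to\wtl{\Yi}$. I would first lift the higher Drinfeld generators: define elements $h_{i,2r+1},b_{i,r}\in\wtl{\Yi}$ for all $r\ge 0$ inductively, via iterated commutators with $h_{j,1}$ and with $b_{i,1}$, using recursive formulas extracted from the low-degree cases of the defining relations \eqref{ty0}--\eqref{fSerre3} of $\Yi$. The heart of the argument is to verify, by induction on total degree, that these lifted elements satisfy every one of the relations \eqref{ty0}--\eqref{fSerre3}. At each inductive step one uses the already-verified relations at lower degree, Jacobi identities in $\wtl{\Yi}$, and the finite Serre relations \eqref{fSerre0}--\eqref{fSerre3} to bootstrap to higher degree; typically a rank-$2$ relation is propagated to all degrees by repeatedly commuting with a degree-$1$ generator $h_{j,1}$ attached to a neighbouring node $j$. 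When $\g$ is of type $\mathsf A_1$, $\mathsf B_2\cong\mathsf C_2$, or $\mathsf G_2$, the Dynkin diagram (or the Cartan pairing) is too constrained to provide such a neighbour with enough room: the diagonal commutator that would otherwise be derived is not controlled by the other relations. One then imposes \eqref{eq:add-rel} at a single node $i\in\I$ (it suffices at one, since the diagram symmetry and the remaining relations propagate it) to close the induction, mirroring the analogous phenomenon for $\Y$ in \cite[\S2]{GNW18}. Once all relations of $\Yi$ are checked to hold in $\wtl{\Yi}$, the universal property of $\Yi$ yields $\psi$, and it is immediate that $\phi\circ\psi$ and $\psi\circ\phi$ fix generators, so the two maps are mutually inverse.

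The main obstacle is the bookkeeping in the inductive verification of the higher Drinfeld--Serre relations and, above all, the twisted commutation relations between $\{b_{i,r}\}_{r\ge 0}$ and $\{h_{j,2s+1}\}_{s\ge 0}$. In contrast to the Yangian case, commutators of a lifted $b_{i,r}$ with $h_{j,1}$ produce quadratic correction terms arising from the coideal structure, and these corrections must be propagated carefully through each inductive step. A useful device is to introduce the filtration on $\wtl{\Yi}$ assigning degree $2r{+}1$ to $h_{i,2r+1}$ and degree $r$ to $b_{i,r}$, so that each relation can be verified modulo higher filtration and then lifted by induction; a consistency check is that the associated graded recovers the twisted current algebra quantized in \cite{BR17}, which forces the inductive relations to close up in the expected manner.
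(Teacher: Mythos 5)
Your overall framework (the canonical surjection $\phi$ from the minimalistic algebra, reduction to injectivity, lifting the higher generators $h_{i,2r+1},b_{i,r}$, and the role of \eqref{eq:add-rel} in small rank) matches the shape of the argument, but the proposal stops exactly where the proof begins. The entire content of the theorem is the inductive verification that the lifted elements satisfy \eqref{ty0}--\eqref{ty2} (equivalently, that the minimalistic relations already force $h_{i,2r}=0$, $[h_{i,r},h_{j,s}]=0$, the shift relations, etc.), and you describe this only as a plan (``bootstrap to higher degree'', ``propagated carefully''), acknowledging the quadratic correction terms as the main obstacle without resolving them. In particular, nothing in your write-up actually derives the key diagonal identity (essentially $[h_{i,1},h_{i,3}]=0$) from the minimalistic relations when an $\mathsf A_2$ subdiagram is present; this is precisely the step (the analogue of Lemma \ref{lem:hi4}, propagated to all nodes by Lemma \ref{itoj} via connectivity $(\alpha_i,\alpha_j)\neq 0$, not ``diagram symmetry'') that explains why \eqref{eq:add-rel} is needed only for $\mathsf A_1$, $\mathsf B_2\cong\mathsf C_2$, $\mathsf G_2$ and suffices at a single node.

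Moreover, your fallback device --- verify each relation modulo higher filtration and invoke ``as a consistency check'' that the associated graded recovers the twisted current algebra, ``which forces the inductive relations to close up'' --- is circular. That the associated graded of the minimalistic algebra is no larger than $\mathrm{U}(\g[z]^{\check\omega})$ is exactly what must be proved; it does not follow from the existence of the surjection onto $\Yi$. The paper resolves this by first establishing a minimalistic presentation of $\mathrm{U}(\g[z]^{\check\omega})$ itself (Proposition \ref{prop:red}, whose proof occupies \S\ref{sec:pf-prop} and is where all the induction happens, in a setting with no quadratic terms), and then transferring it to $\Yi$ by the filtered-surjection/associated-graded argument of \S\ref{sec:pfthmA}. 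Alternatively one could push your direct Levendorskii--Guay--Nakajima--Wendlandt computation through at the quantum level, but then the full induction with the anticommutator corrections in \eqref{ty1}--\eqref{ty2} has to be carried out explicitly. Either way, that verification --- the core of the theorem --- is absent from the proposal as written.
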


The extra relation \eqref{eq:add-rel} for $\Yi$ is analogous to the relation $\big[\xi_{i,1},[x_{i,1}^+,x_{i,1}^-]\big]=0$ for $\Y$, which seems to be necessary when the rank is very small. More precisely, when there are at least two nodes in the Dynkin diagram, then the relation $\big[\xi_{i,1},[x_{i,1}^+,x_{i,1}^-]\big]=0$ for $\Y$ can be deduced from other relations in the minimalistic presentation. A similar situation happens when there is a subdiagram of type $\mathsf A_2$ in the Dynkin diagram for the case of twisted Yangians (cf. Lemma \ref{lem:hi4}).

Recently, a completely different Drinfeld presentation of twisted Yangian of split type $\mathsf A_{2n}$ is given in \cite{HU25}, where a corresponding minimalistic presentation is also introduced. We expect their presentation is related to the parabolic presentation in \cite{LPTTW25} with the matrix chosen and the composition $(2^n)$.

\subsection{Embedding into Yangians}
Twisted Yangian $\YiJ$ associated to arbitrary symmetric pairs was introduced in \cite{Mac02} as a coideal subalgebra of Yangian in the $J$ presentation. The same family of twisted Yangians are also obtained via a homogeneous quantization of a Lie coideal structure for twisted current algebras in \cite{BR17}, where the generators and defining relations analogous to \cite[Theorem 2]{Dri85} are explicitly described.

It is important to understand if the twisted Yangians in Drinfeld presentations are coideal subalgebras of Yangians and how twisted Yangians in Drinfeld presentations from \cite{LWZ25affine} and in $J$ presentations from \cite{Mac02,BR17} are related. Our second main result is to address this question with the help of Theorem \ref{thm:min}.

\begin{alphatheorem}[Theorem \ref{thm:embedmaintext}]\label{thm:embed}
Let $\g$ be a simple Lie algebra except for type $\mathsf G_2$. The map $\varphi$ defined by
\begin{align}
    b_{i,0}&\mapsto x_{i}^+-x_i^-,\notag\\
    h_{i,1}&\mapsto 2\xi_{i,1}-\xi_{i}^2+\sum_{\alpha\in\Phi^+}(\alpha,\alpha_i)(x_\alpha^+)^2,\label{eq:intro1}\\
    b_{i,1}&\mapsto x_{i,1}^++x_{i,1}^-+\tfrac12\sum_{\alpha\in\Phi^+}\big\{[x_{i}^+,x_{\alpha}^+],x_{\alpha}^+\big\}-\tfrac12\{x_i^+,\xi_i\}.\label{eq:intro2}
\end{align}
induces an algebra monomorphism from $\Yi$ to $\Y$, which identifies $\Yi$ as a right coideal subalgebra of $\Y$. In particular, $\Yi$ is isomorphic to the twisted Yangian $\YiJ$ in $J$ presentation. 
\end{alphatheorem}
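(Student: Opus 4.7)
\medskip

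\noindent\textbf{Proof proposal.} The plan is to leverage the minimalistic presentation from Theorem~\ref{thm:min} to reduce the verification that $\varphi$ is a homomorphism to a finite check, then use a filtration argument for injectivity, and finally compare with the $J$-presentation via the coideal property.

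\emph{Step 1 (well-definedness).} By Theorem~\ref{thm:min}, to show that $\varphi$ extends to an algebra homomorphism $\Yi\to\Y$, it suffices to verify the finitely many relations \eqref{redhh}--\eqref{redbb} together with the finite Serre relations \eqref{fSerre0}--\eqref{fSerre3}, plus the rank-one extra relation \eqref{eq:add-rel} when $\g$ is of type $\mathsf A_1$ or $\mathsf B_2\cong\mathsf C_2$ (type $\mathsf G_2$ is excluded). All computations take place in $\Y$ using Drinfeld's presentation, and the images involve only degree $\le 1$ Drinfeld generators $\xi_{i,0},\xi_{i,1},x_{i,0}^\pm,x_{i,1}^\pm$ together with sums over the positive roots $\Phi^+$ (the elements $x_\alpha^\pm$ being obtained from iterated brackets of the $x_{i,0}^\pm$). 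The low-degree relations in $\Y$ together with the standard Hopf-type identities suffice. The main computational load is the finite Serre relations \eqref{fSerre2}--\eqref{fSerre3}, which expand into a finite (but intricate) combination of quadratic expressions in the root vectors; matching coefficients root-by-root is routine but tedious. I would first dispose of the $\g=\fksl_2$ case by direct calculation (this is where relation \eqref{eq:add-rel} is forced), then handle higher rank by a case-by-case check on $\mathsf A_2$, $\mathsf B_2$, $\mathsf G_2$-free subsystems, reducing everything to rank-two subalgebras generated by two simple-root data.

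\emph{Step 2 (injectivity).} Equip both $\Yi$ and $\Y$ with the canonical filtrations where $h_{i,2r+1},b_{i,r}$ have degree $2r+1$ and $r$ respectively on the source side, and $\xi_{i,r},x_{i,r}^\pm$ have degree $r$ on the target side. The formulas \eqref{eq:intro1}--\eqref{eq:intro2} are designed so that their top-degree parts are $2\xi_{i,1}$ and $x_{i,1}^++x_{i,1}^-$, which map to a known PBW-type basis of the associated graded twisted current algebra $\g[t]^\theta$ (the $+1$-fixed subalgebra of $\g[t]$ under the Cartan involution $\theta$), cf.\ \cite{BR17}. Using the PBW theorem for $\Yi$ established in \cite{LWZ25affine}, it is enough to observe that $\gr(\varphi)$ is the natural embedding of $U(\g[t]^\theta)$ into $U(\g[t])$, hence injective, which implies $\varphi$ is injective.

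\emph{Step 3 (coideal property and identification with $\YiJ$).} The Drinfeld coproduct of $\xi_{i,1}$ and $x_{i,1}^\pm$ in $\Y$ is explicit modulo $\Y\otimes \Y_{>0}$, and a direct computation (again reducible to low-degree terms) shows that
\[
\Delta\bigl(\varphi(b_{i,0})\bigr)\in \varphi(\Yi)\otimes\Y,\qquad
\Delta\bigl(\varphi(h_{i,1})\bigr),\;\Delta\bigl(\varphi(b_{i,1})\bigr)\in \varphi(\Yi)\otimes\Y.
\]
By Theorem~\ref{thm:min} the three families $b_{i,0},h_{i,1},b_{i,1}$ generate $\Yi$, so the right-coideal property $\Delta(\varphi(\Yi))\subseteq \varphi(\Yi)\otimes\Y$ follows. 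To identify $\varphi(\Yi)$ with $\YiJ$, I recall that $\YiJ$ is generated inside $\Y$ by the fixed points of $\theta$ on $\g$ together with a single copy of $\theta$-anti-fixed elements at level~$1$ given explicitly by Drinfeld's $J$ map \cite{Mac02,BR17}. The images \eqref{eq:intro1}--\eqref{eq:intro2} are precisely the split-type realizations of these $J$-generators (rewritten in Drinfeld generators of $\Y$), so $\varphi(\Yi)\subseteq \YiJ$. Equality then follows by comparing associated graded, both being $U(\g[t]^\theta)$.

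\emph{Main obstacle.} The hard part will be the verification in Step~1 of the mixed finite Serre relations \eqref{fSerre2}--\eqref{fSerre3} under $\varphi$: unlike the usual Yangian check (which essentially reduces to $\fksl_2$-triples), the sums $\sum_{\alpha\in\Phi^+}(\alpha,\alpha_i)(x_\alpha^+)^2$ and $\sum_{\alpha\in\Phi^+}\{[x_i^+,x_\alpha^+],x_\alpha^+\}$ couple different root strings, so expanding the commutators produces a proliferation of terms indexed by roots $\alpha,\beta$ with $\alpha+\beta\in\Phi$ or $\alpha\pm\alpha_i\in\Phi$. The exclusion of type $\mathsf G_2$ in the statement is precisely a sign that these root-sum identities fail when triple edges are present; for the remaining types one must organize the cancellations via a careful case analysis on pairs of roots, presumably by reducing to rank-two Levi subalgebras of types $\mathsf A_1\times\mathsf A_1$, $\mathsf A_2$, and $\mathsf B_2$.
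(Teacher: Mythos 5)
Your overall skeleton coincides with the paper's: minimalistic presentation to reduce well-definedness to finitely many relations, associated-graded comparison for injectivity (identifying $\gr\varphi$ with the embedding $\mathrm{U}(\g[z]^{\check\omega})\hookrightarrow\mathrm{U}(\g[z])$), explicit coproducts of the generators $h_{i,1},b_{i,0}$ for the right coideal property, and a $\gr$-comparison plus the identity expressing $h_{i,1}$ through $B(\xi_i)$ to get $\Yi=\YiJ$. However, Step 1 has two genuine gaps. First, you misplace the computational difficulty: under $\varphi$ the finite Serre relations \eqref{fSerre0}--\eqref{fSerre3} only involve $b_{i,0}\mapsto x_i^+-x_i^-$, i.e.\ elements of the fixed-point Lie subalgebra $\mathfrak k\subset\g\subset\Y$, so they are Lie-algebra-level identities and essentially free; the relation that actually requires work is $[\varphi(h_{i,1}),\varphi(h_{j,1})]=0$ from \eqref{redhh}, where the root sums $\sum_\alpha(\alpha,\alpha_i)(x_\alpha^+)^2$ interact with $\xi_{i,1}$. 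Your proposal never says how to handle this; the paper does it by rewriting $\varphi(h_{i,1})=2J(\xi_i)-2\tilde v_i+\sum_\alpha(\alpha,\alpha_i)(x_\alpha^+)^2$, invoking $[J(\xi_i)-\tilde v_i,J(\xi_j)-\tilde v_j]=0$ and $[J(\xi_i),x_\beta^+]=(\beta,\alpha_i)J(x_\beta^+)$ from the $J$-presentation literature, and then organizing the residual cancellations via the structure-constant identities \eqref{eta}. Some such input (or an equivalent uniform cancellation scheme) is needed; a reduction to rank-two subsystems alone does not obviously dispose of \eqref{redhh}, since the mixed sums run over all of $\Phi^+$.

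Second, your plan to "dispose of the $\fksl_2$ case by direct calculation" of the extra relation \eqref{eq:add-rel} is not credible as stated: this is exactly the step the paper flags as highly nontrivial (compare the appendix-length verification of $\big[\xi_{i,1},[x_{i,1}^+,x_{i,1}^-]\big]=0$ for $\Y(\fksl_2)$ in \cite{GRW19}), and it circumvents it for types $\mathsf A_1$ and $\mathsf B_2\cong\mathsf C_2$ by passing through the R-matrix presentations (the isomorphism of \cite{LWZ25GD} for $\mathsf A_1$, and a Gauss-decomposition argument inside the $\mathsf{CI}_2$ twisted Yangian of \cite{GR16} for $\mathsf B_2$). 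Without either this R-matrix input or a genuinely new direct verification, your argument is incomplete precisely in the cases where \eqref{eq:add-rel} must be checked. Relatedly, your diagnosis of the exclusion of $\mathsf G_2$ is wrong: it is not that the root-sum identities fail for triple edges, but that $\mathsf G_2$ has no pair $i\ne j$ with $c_{ij}=c_{ji}=-1$, so the extra relation \eqref{eq:add-rel} cannot be dropped there, and its verification (expected to go again through an R-matrix presentation) is still open.
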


The key part of Theorem \ref{thm:embed} is to verify that $\varphi$ induces an algebra homomorphism, which relies on Theorem \ref{thm:min}. In general the extra relation \eqref{eq:add-rel} is  complicated to verify, cf. \cite[Appendix]{GRW19} where the relation $\big[\xi_{i,1},[x_{i,1}^+,x_{i,1}^-]\big]=0$ is verified for Yangian of $\g=\fksl_2$ in terms of $J$ presentation. For split type $\mathsf A_1$, we shall use the explicit isomorphism between twisted Yangians in R-matrix  and  Drinfeld presentations from \cite{LWZ25GD}, see Appendix \ref{sec:app}. The case of type $\mathsf B_2\cong \mathsf C_2$ can be attacked similarly using the R-matrix presentation by verifying the extra relation \eqref{eq:add-rel}, see Appendix \ref{sec:app2} for more detail.

Another main challenge of Theorem \ref{thm:embed} is to find the explicit correspondence for $h_{i,1}$, $b_{i,1}$ in \eqref{eq:intro1}--\eqref{eq:intro2}. Remarkably, these formulas are surprisingly similar to the correspondence between Yangians in Drinfeld and $J$ presentations,
\begin{align}
  J(\xi_i) & \mapsto \xi_{i,1}- \tfrac12 \xi_i^2 + \tfrac14 \sum_{\alpha\in\Phi^+}
  (\alpha,\alpha_i) \{x^+_\alpha, x^-_\alpha\},\label{eq:J1}
\\
  J(x^+_i) &\mapsto x^\pm_{i, 1} \pm \tfrac14
  \sum_{\alpha\in\Phi^+} \left\{[x_i^\pm,x^\pm_\alpha],x^\mp_\alpha\right\} 
  - \tfrac14\{x^\pm_i,\xi_i\},\notag
\end{align}
see \cite[Theorem~1]{Dri87} and \cite[Theorem~2.16]{GRW19}. For instance, changing $x_\alpha^-$ to $x_\alpha^+$ in the RHS of \eqref{eq:J1} and multiplying the expression by two, one obtains the RHS of \eqref{eq:intro1}.

A different notion of ``twisted Yangians" is introduced in \cite[Definition 3.15]{LWW25} in connection with quantization of loop symmetric space via quantum duality principle \cite{Dr87b,G02}. Theorem \ref{thm:embed} gives an answer to the first two questions listed in \S\ref{sec:goal} and  proves a conjecture raised in \cite[Remark 3.23]{LWW25} that the split twisted Yangians in Drinfeld presentations are ``twisted Yangians" in the sense of \cite[Definition 3.15]{LWW25}.

\subsection{Estimates of generators}
Finally, let us approximately express the Drinfeld generators $h_{i,2r+1},b_{i,r}$ for $\Yi$ and their images under the coproduct for $\Y$ in terms of the Drinfeld generators $\xi_{i,r},x_{i,r}^\pm$ for $\Y$. Denote by $Q$ the root lattice of $\g$ and $Q_+$ the subset of $\bN$-linear combinations of simple roots (excluding zero weight). It is well known  that the Yangian $\Y$ is $Q$-graded. For a subset $\mathsf S\subset Q$, denote $\Y_{\mathsf S}$ the subspace of $\Y$ spanned by homogeneous elements of degree in $\mathsf S$.

\begin{alphatheorem}[Theorem \ref{thm:hb}]
\label{thm:hb-intro}
Let $\g$ be a simple Lie algebra except for type $\mathsf G_2$. We have
\begin{align}
&h_i(u)\equiv \xi_i(u)\xi_i(-u) &\pmod{\Y_{Q_+}[\![u^{-1}]\!]},\label{eq:h-est-intro}\\
&b_i(u)\equiv \tfrac12 \{x_i^+(u),\xi_i(-u)\}+x_i^-(-u)   &\pmod{\Y_{\alpha_i+Q_+}[\![u^{-1}]\!]},\label{eq:b-est-intro}\\
&\Delta(h_i(u))\equiv h_i(u)\otimes\xi_i(u)\xi_i(-u)  &\pmod{\Yi\otimes \Y_{Q_+}[\![u^{-1}]\!]},\label{eq:h-co-est-intro}\\
&\Delta(b_i(u))\equiv b_i(u)\otimes\xi_i(-u)+1\otimes b_i(u) &\pmod{\Yi\otimes \Y_{Q_+}[\![u^{-1}]\!]}.\notag
\end{align}
\end{alphatheorem}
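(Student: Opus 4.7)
The plan is to reduce all four congruences to the explicit formulas \eqref{eq:intro1}--\eqref{eq:intro2} of Theorem \ref{thm:embed} and then propagate them to higher generators through the recursive relations of the Drinfeld presentation, in which $h_{i,2r+1}$ and $b_{i,r+1}$ are produced by iterated commutators involving $h_{i,1}$, up to lower-degree corrections. The first step is to check \eqref{eq:h-est-intro} and \eqref{eq:b-est-intro} for the lowest terms by expanding the target series coefficient-wise. The $u^{-2}$-coefficient of $\xi_i(u)\xi_i(-u)$ equals $2\xi_{i,1}-\xi_i^2$, which differs from $\varphi(h_{i,1})$ exactly by $\sum_{\alpha\in\Phi^+}(\alpha,\alpha_i)(x_\alpha^+)^2$; each summand is homogeneous of weight $2\alpha\in Q_+$ and therefore lies in $\Y_{Q_+}$. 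Similarly, the $u^{-2}$-coefficient of $\tfrac12\{x_i^+(u),\xi_i(-u)\}+x_i^-(-u)$ equals $x_{i,1}^++x_{i,1}^--\tfrac12\{x_i^+,\xi_i\}$, differing from $\varphi(b_{i,1})$ by $\tfrac12\sum\{[x_i^+,x_\alpha^+],x_\alpha^+\}$ which lies in $\Y_{\alpha_i+Q_+}$; the $u^{-1}$ coefficient is immediate from $\varphi(b_{i,0})=x_i^+-x_i^-$.

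I would then propagate these estimates to all $h_{i,2r+1}$ and $b_{i,r}$ by induction on $r$, using the defining commutator relations of $\Yi$ from the Drinfeld presentation \eqref{ty0}--\eqref{fSerre3} to express higher-indexed generators through bracketing with $h_{i,1}$. The key technical input is the additivity of the $Q$-grading under commutators, $[\Y_{\mathsf S},\Y_{\mathsf T}]\subseteq \Y_{\mathsf S+\mathsf T}$, which confines errors to the prescribed weight strata. One then needs to verify that the target series $\xi_i(u)\xi_i(-u)$ and $\tfrac12\{x_i^+(u),\xi_i(-u)\}+x_i^-(-u)$, regarded as elements of $\Y[\![u^{-1}]\!]$, satisfy the analogues of the $\Yi$-commutator recursions modulo $\Y_{Q_+}[\![u^{-1}]\!]$ and $\Y_{\alpha_i+Q_+}[\![u^{-1}]\!]$ respectively; this is a computation carried out purely inside $\Y$ using only the Drinfeld relations of $\Y$, and then transferred to $\Yi$ via $\varphi$.

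For the coproduct estimates \eqref{eq:h-co-est-intro} and its $b$-counterpart I would combine the approximate Drinfeld-presentation coproduct formulas for $\Y$ from \cite{GNW18} with the already-proved estimates \eqref{eq:h-est-intro}--\eqref{eq:b-est-intro}. Modulo $\Y\otimes \Y_{Q_+}$, one has $\Delta(\xi_i(u))\equiv \xi_i(u)\otimes \xi_i(u)$, $\Delta(x_i^-(u))\equiv 1\otimes x_i^-(u) + x_i^-(u)\otimes 1$, and similar controlled formulas for $x_{i,1}^\pm$. Applying these to \eqref{eq:intro1}--\eqref{eq:intro2} yields $\Delta(h_{i,1})$ and $\Delta(b_{i,1})$ in the required form modulo $\Y\otimes \Y_{Q_+}$. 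The coideal property $\Delta(\Yi)\subseteq \Yi\otimes \Y$ established in Theorem \ref{thm:embed} then automatically sharpens the first tensor slot of each error term from $\Y$ to $\Yi$, since $(\Yi\otimes \Y)\cap(\Y\otimes \Y_{Q_+})=\Yi\otimes \Y_{Q_+}$. The general case follows by induction on $r$, applying $\Delta$ as an algebra homomorphism to the same commutator recursions that produced \eqref{eq:h-est-intro}--\eqref{eq:b-est-intro}.

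The main obstacle will be the bookkeeping in the propagation: verifying that the candidate series $\xi_i(u)\xi_i(-u)$ and $\tfrac12\{x_i^+(u),\xi_i(-u)\}+x_i^-(-u)$ interact in $\Y$ compatibly with the $\Yi$-commutator recursions, modulo precisely the stated weight ideals, so that the induction does not degrade. On the coproduct side a further delicate point is that the Drinfeld-presentation coproduct on $\Y$ is only approximately computable; the coideal property is therefore indispensable for upgrading the error from $\Y\otimes \Y_{Q_+}[\![u^{-1}]\!]$ to the stated $\Yi\otimes \Y_{Q_+}[\![u^{-1}]\!]$.
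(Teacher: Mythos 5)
Your treatment of \eqref{eq:h-est-intro} and \eqref{eq:b-est-intro} is essentially the paper's: check the $r=0,1$ coefficients against \eqref{eq:intro1}--\eqref{eq:intro2}, then propagate by bracketing with $h_{i,1}$, which amounts to verifying that the candidate series satisfies $[\varphi(h_{i,1}),\,\cdot\,]=2(\alpha_i,\alpha_i)(u\,\cdot\,-\text{const})$ inside $\Y$ modulo the stated weight ideal; the $h$-estimate then comes from the series form of \eqref{ty2}, i.e.\ \eqref{bi0biu}, rather than a separate induction, but that is a cosmetic difference. Your induction for the $b$-coproduct estimate also matches the paper, provided you use the exact formula \eqref{coprohi1} for $\Delta(h_{i,1})$ (whose first slots lie in $\Yi$ and whose second slots have weight in $\{0\}\cup Q_+$, so $\mathrm{ad}(\Delta(h_{i,1}))$ preserves $\Yi\otimes\Y_{Q_+}$). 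One input you state is wrong, though: modulo $\Y\otimes\Y_{Q_+}$ one has $\Delta(x_i^-(u))\equiv x_i^-(u)\otimes\xi_i(u)+1\otimes x_i^-(u)$ (Lemma \ref{lem:copro}), not $x_i^-(u)\otimes 1+1\otimes x_i^-(u)$; the discarded term $x_i^-(u)\otimes(\xi_i(u)-1)$ has weight-zero second slot and is exactly where the factor $\xi_i(-u)$ in the target formula for $\Delta(b_i(u))$ comes from.

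The genuine gap is in \eqref{eq:h-co-est-intro}. Your plan is to apply $\Delta$ to the same recursions and quote the already-proved congruences, then upgrade the first slot by the coideal property via $(\Yi\otimes\Y)\cap(\Y\otimes\Y_{Q_+})=\Yi\otimes\Y_{Q_+}$. The intersection identity is fine, but the congruence modulo $\Y\otimes\Y_{Q_+}$ for the full series $\Delta(h_i(u))$ is precisely what your induction cannot deliver: the recursion producing $h_i(u)$ is \eqref{bi0biu}, and substituting $\Delta(b_i(u))\equiv b_i(u)\otimes\xi_i(-u)+1\otimes b_i(u)$ forces you to commute and multiply the error terms with $1\otimes b_{i,0}$ and $1\otimes b_i(u)$, whose second slots contain $x_i^-$-components that push weights out of $Q_+$ (e.g.\ $[x_i^-,\Y_{\alpha_i}]$ meets weight zero); equivalently, $\Delta$ does not map $\Y_{Q_+}$ into $\Y\otimes\Y_{Q_+}$, so the estimate $h_i(u)\equiv\xi_i(u)\xi_i(-u)$ does not transfer through $\Delta$ by naive bookkeeping. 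This is the obstacle you flag at the end but do not resolve. The paper's proof of \eqref{eq:h-co-est} is structurally different: it writes $h_i(u)=\xi_i(u)\xi_i(-u)+\theta(u)$ with $\theta(u)\in\Y_{Q_+}[\![u^{-1}]\!]$, uses the refined estimate of Lemma \ref{lem:copro} (first slot controlled in $\Y^{\lle 0}_{Q_-}$, not merely in $\Y$), the coideal property to place $\Theta(u)=\Delta(h_i(u))-h_i(u)\otimes\xi_i(u)\xi_i(-u)$ in $\Yi\otimes\Y$, and then, for each $j\in\I$, the decomposition \eqref{dec-Q} $\Y=\Y_{Q_{j,+}}\oplus\Y_{Q_{j,-}}$ together with Lemma \ref{lem:cap=0} ($\Yi\cap\Y_{Q_{j,+}}=\{0\}$, proved through the associated graded and the $\pm$-symmetry of weights of top symbols of elements of $\Yi$) to kill the unwanted components. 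Your proposal contains no substitute for Lemma \ref{lem:cap=0} or for this weight-decomposition step, so the fourth congruence (and hence also \eqref{eq:h-co-est-intro}) is not established as written.
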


Similar formulas to \eqref{eq:h-est-intro} and \eqref{eq:h-co-est-intro} for affine $\imath$quantum groups were conjectured in \cite{WZ23} and are proved in \cite[Corollary 9.16]{Prz23} for split type $\sf A$ and in \cite[Theorem 8.1]{LP25} for split type $\sf BCD$. However, the formula \eqref{eq:b-est-intro} seems to be new.

Our proof of Theorem \ref{thm:hb-intro} works uniformly for all types (assuming Theorem \ref{thm:embed} for type $\mathsf G_2$) using the explicit formulas \eqref{eq:intro1}--\eqref{eq:intro2}. We conveniently reduce the induction steps to verifying relations in generating series form, see \S\ref{sec:proof-thmC}. We deduce \eqref{eq:h-co-est-intro} from \eqref{eq:h-est-intro}, the coproduct estimates for $\xi_i(u)$, and properties of twisted Yangians, following a similar idea used in \cite{Prz23}.

An important consequence of \eqref{eq:h-est-intro} is that it allows one to calculate the joint spectrum of $h_i(u)$ acting on a $\Y$-module (whose character is known) regarded as a $\Yi$-module via restriction, see Corollary \ref{cor:cha}. 

Since the coefficients of $h_i(u)$ form a maximal commutative subalgebra of $\Yi$, one can define the ${}^\imath q$-character (or boundary $q$-character) for twisted Yangians, similarly to \cite{Kn95} for Yangians and \cite{FR99} for quantum affine algebras. The formula \eqref{eq:h-co-est-intro} is a crucial property (cf. \cite[Lemma 1]{Kn95} and \cite[Lemma 1]{FR99}) to ensure ${}^\imath q$-character map for twisted Yangians to be a $\Y$-module homomorphism, see \cite[\S9.2]{LP25} for more detail.

\subsection{Future directions}
It would be interesting to prove Theorem \ref{thm:embed} for the remaining  type $\mathsf G_2$, and then Theorem \ref{thm:hb-intro} would follow. The main difficulty is verifying the extra relation \eqref{eq:add-rel}, which already seems to be highly nontrivial for the Yangian of $\fksl_2$ (cf. \cite[Appendix]{GRW19}). We expect this case can be addressed again via the R-matrix presentation, similarly to types $\mathsf A_1$ and $\mathsf B_2\cong\mathsf C_2$.

Twisted Yangians of quasi-split type in Drinfeld presentation were introduced in \cite{LZ24} where an explicit isomorphism between twisted Yangians in Drinfeld and R-matrix presentations was established for quasi-split type $\mathsf A$ (a case of type $\mathsf{AIII}$). Therefore, an analogue of Theorem \ref{thm:embed} for quasi-split type $\mathsf A$ can be established with a similar calculation to that done in Appendix \ref{sec:app}. Furthermore, one then obtains an analogue of Theorem \ref{thm:hb-intro} for quasi-split type $\mathsf A$. Thus, it is natural to expect that the results of this article can be generalized to the quasi-split types.

One would expect our approach can be generalized to the $q$-deformed case (affine $\imath$quantum groups) so that Theorem \ref{thm:hb-intro} can be deduced uniformly for all split types and quasi-split $\mathsf{ADE}$ types. However,  the isomorphism between the Drinfeld-Jimbo \cite{Kolb14} and the Drinfeld presentations was already established in \cite{LW21,Zha22,mLWZ24} and hence this difficulty encountered in the twisted Yangian case does not appear in the affine $\imath$quantum group case. 

In \cite{FKPRW18}, a minimalistic presentation for shifted Yangians $\Y_\mu$ similar to \cite{Lev93gen} (with the extra relation included) was used to define a (shifted) coproduct homomorphism from $\Y_\mu$ to $\Y_{\mu_1}\otimes \Y_{\mu_2}$, where $\mu,\mu_1,\mu_2$ are integral coweights such that $\mu=\mu_1+\mu_2$, generalizing the baby coproduct for shifted Yangian of type $\mathsf A$ introduced in \cite{BK06}. Recently, a family of shifted twisted Yangians of quasi-split type $\Yi_\mu$ were introduced in \cite{LWW25SiY} (see also \cite{TT24,LPTTW25,SSX25}) to study the geometry of fixed point loci of affine Grassmannian slices. One expects that shifted twisted Yangians also admit a similar minimalistic presentation. In particular, such a minimalistic presentation could be used to prove that there exist various shifted coproduct homomorphisms from  $\Yi_{\mu+\nu+\tau \nu}$ to $\Yi_\mu\otimes \Y_{\nu+\tau \nu}$, where $\tau$ is an involution of $\g$. In the split type $\mathsf A$ case, such a family of shifted coproduct homomorphisms (also called baby coproduct and introduced in \cite[\S9]{LPTTW25} for dominant $\mu$ and $\nu=0$) plays a fundamental role  in connecting truncated shifted twisted Yangians with finite $\mathcal W$-algebras of classical type \cite{LPTTW25}. The classical limit of such shifted coproducts is expected to be related to the multiplication maps between affine Grassmannian slices and islices introduced in \cite{LWW25}.

\subsection{Organization} The paper is organized as follows. In the preliminary Section \ref{sec:pre}, we review Yangians and twisted Yangians in $J$ and Drinfeld presentations. Some basic properties of these algebras are recalled. In Section \ref{sec:main1}, we study the minimalistic presentation for twisted Yangians by investigating the corresponding results on the level of associated graded. In Section \ref{sec:coideal}, we apply the minimalistic presentation to embed twisted Yangians into Yangians and show that they are isomorphic to twisted Yangians in $J$ presentations. Finally, we express Drinfeld generators of twisted Yangians and their coproduct images in terms of that of Yangians in Section \ref{sec:estimate}. Appendix \ref{sec:app+} is devoted to the special cases of types $\mathsf A_1$ and $\mathsf B_2\cong \mathsf C_2$ via twisted Yangians in R-matrix presentation.

\medskip

\noindent {\bf Acknowledgement.} The author thanks J. Brundan, N. Guay, J.-R. Li, E. Mukhin, T. Przeździecki, Y. Shen, W. Wang, B. Webster and W. Zhang for helpful discussions and correspondences. 
The author was partially supported by Weiqiang Wang's NSF grant DMS--2401351 when the author was at the University of Virginia. 





\section{Yangians and twisted Yangians}\label{sec:pre}
Throughout the paper, all (Lie) algebras are defined over the complex field $\mathbb C$.

\subsection{Lie algebras}
Let $\g$ be a finite-dimensional simple Lie algebra associated with the Cartan matrix $C=(c_{ij})_{i,j\in\I}$, where $\I=\{1,2,\cdots,n\}$ is the set of vertices of the Dynkin diagram corresponding to $\g$. 

Fix an invariant inner product $(\cdot,\cdot)$ on $\g$ and normalize the Chevalley generators $x_i^\pm,\xi_i$ so that $(x_i^+,x_i^-)=1$ and $\xi_i=[x_i^+,x_i^-]$. Let $\Phi$ and $\Phi^+$ be the set of roots and the set of positive roots, respectively. Denote the simple roots by $\alpha_i$ for $i\in \I$. Let $D = \mathrm{diag}(d_1,\dots,d_n)$, where
$d_i=\hf(\alpha_i,\alpha_i)$ for $i\in \I$. Then the matrix $A=DC$ is symmetric. 

Denote the root space corresponding to a root $\beta\in\Phi$ by $\g_\beta$. For each $\alpha\in \Phi^+$, let $x^\pm_\alpha\in \g_{\pm\alpha}$ be nonzero root vectors normalized so that $(x_\alpha^+,x_\alpha^-)=1$ and $x_{\alpha_i}^\pm=x_i^\pm$. We set $x_{\alpha}=0$ if $\alpha\notin \Phi$ and
\[
x_\alpha:=\begin{cases}
x_\alpha^+, & \text{ if }\alpha\in\Phi^+,\\
x_{-\alpha}^-,& \text{ if }-\alpha\in\Phi^+.
\end{cases}
\]
We also set $x_{\alpha}^\pm=0$ if $\alpha\notin\Phi^+$.

For $\alpha,\beta\in\Phi$, define the structure constant $\eta_{\alpha,\beta}$ by the rule: $\eta_{\alpha,\beta}=0$ if $\alpha+\beta\notin\Phi$ and
\[
[x_{\alpha},x_\beta]=\eta_{\alpha,\beta}x_{\alpha+\beta}
\]
if $\alpha+\beta\in \Phi$. We can further rescale the root vectors such that $\eta_{\alpha,\beta}=\eta_{-\beta,-\alpha}$ for $\alpha,\beta\in\Phi^+$, see e.g. \cite[\S25.2]{Hum72}. Therefore we have the following equalities, which we shall frequently use in \S\ref{sec:pfthmB0}:
\beq\label{eta}
\eta_{\alpha,\beta}=-\eta_{\beta,\alpha}=\eta_{-\beta,-\alpha}=\eta_{-\alpha,\alpha+\beta},
\eeq
for $\alpha,\beta\in\Phi^+$ such that $\alpha+\beta\in\Phi^+$. Here the last equality follows from the invariant property of the bilinear form.

Let $\omega$ be the Chevalley involution of $\g$ defined by
\beq\label{omega}
\omega:\g\to\g,\qquad \xi_i\to -\xi_i,\qquad x_i^\pm \to -x_i^\mp,\qquad (i\in \I).
\eeq
Denote by $\mathfrak k:=\g^\omega$ the $\omega$-fixed point Lie subalgebra of $\g$ and by $\mathfrak p$ the eigenspace of $\omega$ corresponding to the eigenvalue $-1$. Then $(\g,\mathfrak k)$ forms a symmetric pair of \textit{split type}. 

Note that 
\beq\label{eq:kprel}
[\mathfrak k,\mathfrak k]\subset \mathfrak k,\qquad [\mathfrak k,\mathfrak p]\subset \mathfrak p,\qquad [\mathfrak p,\mathfrak p]\subset \mathfrak k,
\eeq
and $\omega(x_\alpha^\pm)=-x_\alpha^\mp$ for $\alpha\in\Phi^+$. A basis for $\mathfrak k$ is given by $x_{\alpha}^+-x_\alpha^-$ for $\alpha\in\Phi^+$ while a basis for $\mathfrak p$ is given by $x_{\alpha}^++x_\alpha^-$ and $\xi_i$ for $\alpha\in\Phi^+$ and $i\in \I$. Set
\beq\label{by-def}
b_{\alpha}:=x_{\alpha}^+-x_\alpha^-,\qquad  y_{\alpha}:=x_{\alpha}^++x_\alpha^-,\qquad (\alpha\in\Phi^+).
\eeq

Denote by $C_{\mathfrak k}$, $C_{\mathfrak p}$, and $C_{\g}$ the Casimir elements of $\mathfrak k$, $\mathfrak p$, and $\g$, respectively. We have
\beq\label{Ck}
C_{\mathfrak k}=-\tfrac12\sum_{\alpha\in\Phi^+}(x_{\alpha}^+-x_{\alpha}^-)^2.
\eeq
Denote by $\Omega_{\mathfrak k}$, $\Omega_{\mathfrak p}$, and $\Omega_{\g}$ the 2-tensor Casimir elements of $\mathfrak k$, $\mathfrak p$, and $\g$, respectively. We have
\beq\label{Omega}
\Omega_{\g}= \Omega_{\mathfrak k}+\Omega_{\mathfrak p}, \qquad [x\otimes 1+1\otimes x,\Omega_\g]=0,
\eeq
for $x\in\g$.
\begin{lem}\label{lem:omega}
For $x\in\mathfrak p$, we have
\[
[x\otimes 1,\Omega_{\mathfrak p}]+[1\otimes x,\Omega_{\mathfrak k}]=[x\otimes 1,\Omega_{\mathfrak k}]+[1\otimes x,\Omega_{\mathfrak p}]=0.
\]
\end{lem}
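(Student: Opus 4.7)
The strategy is to start from the ad-invariance of the full Casimir $\Omega_\g$ and then separate the resulting identity by exploiting the $\mathbb{Z}/2$-grading $\g=\mathfrak{k}\oplus\mathfrak{p}$ induced by the Chevalley involution $\omega$.

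First, I would apply the second identity in \eqref{Omega} with $x\in\mathfrak{p}$. Using $\Omega_\g=\Omega_{\mathfrak{k}}+\Omega_{\mathfrak{p}}$, this gives
\[
\bigl[x\otimes 1,\Omega_{\mathfrak{k}}\bigr]+\bigl[x\otimes 1,\Omega_{\mathfrak{p}}\bigr]+\bigl[1\otimes x,\Omega_{\mathfrak{k}}\bigr]+\bigl[1\otimes x,\Omega_{\mathfrak{p}}\bigr]=0.
\]
Next, I would classify each of the four summands by $\mathfrak{k}/\mathfrak{p}$-parity in each tensor factor, using \eqref{eq:kprel}. Since $\Omega_{\mathfrak{k}}\in\mathfrak{k}\otimes\mathfrak{k}$ and $\Omega_{\mathfrak{p}}\in\mathfrak{p}\otimes\mathfrak{p}$, the inclusions $[\mathfrak{p},\mathfrak{k}]\subset\mathfrak{p}$ and $[\mathfrak{p},\mathfrak{p}]\subset\mathfrak{k}$ give
\begin{align*}
[x\otimes 1,\Omega_{\mathfrak{k}}]\in\mathfrak{p}\otimes\mathfrak{k},&\qquad [1\otimes x,\Omega_{\mathfrak{p}}]\in\mathfrak{p}\otimes\mathfrak{k},\\
[x\otimes 1,\Omega_{\mathfrak{p}}]\in\mathfrak{k}\otimes\mathfrak{p},&\qquad [1\otimes x,\Omega_{\mathfrak{k}}]\in\mathfrak{k}\otimes\mathfrak{p}.
\end{align*}

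Since $\mathfrak{k}\cap\mathfrak{p}=0$, the subspaces $\mathfrak{p}\otimes\mathfrak{k}$ and $\mathfrak{k}\otimes\mathfrak{p}$ intersect trivially inside $\g\otimes\g$. Projecting the vanishing sum onto each of these two components therefore yields the two claimed identities simultaneously. The whole argument is essentially bookkeeping with the $\omega$-eigenspace decomposition; there is no serious obstacle, and the proof should fit in a few lines.
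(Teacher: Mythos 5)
Your proof is correct and follows essentially the same route as the paper: both start from the ad-invariance of $\Omega_\g=\Omega_{\mathfrak k}+\Omega_{\mathfrak p}$ and then use \eqref{eq:kprel} to sort the four commutators into the complementary subspaces $\mathfrak k\otimes\mathfrak p$ and $\mathfrak p\otimes\mathfrak k$, forcing each group to vanish separately. The only cosmetic difference is that the paper moves two terms to the other side of the equation rather than phrasing it as a projection, so there is nothing to change.
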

\begin{proof}
It follows from \eqref{Omega} that
\[
[x\otimes 1,\Omega_{\mathfrak p}]+[1\otimes x,\Omega_{\mathfrak k}]=-[x\otimes 1,\Omega_{\mathfrak k}]-[1\otimes x,\Omega_{\mathfrak p}].
\]
Since $x\in\mathfrak p$, it follows from \eqref{eq:kprel} that the LHS belongs to $\mathfrak k\otimes \mathfrak p$ while the RHS belongs to $\mathfrak p\otimes \mathfrak k$. Therefore, the statement follows.
\end{proof}
\begin{rem}\label{rem:inv}
Clearly, Lemma \ref{lem:omega} holds true for arbitrary involution $\omega$ as it relies on \eqref{eq:kprel} which is clearly true for any involution $\omega$ of $\g$.
\end{rem}

\subsection{Yangians}\label{sec:Yang}
We recall definitions of Yangians, following \cite{GNW18}. The Yangian $\Y$ associated to an arbitrary finite-dimensional simple Lie algebra $\g$ was first introduced by Drinfeld in the $J$ presentation.
\begin{dfn}[\cite{Dri85}]
\label{def:Y-J}
The Yangian $\Y:=\Y(\g)$ is the unital associative algebra generated by elements $x$ and $J(x)$ for $x\in\g$ with the defining relations:
\begin{equation}\label{eq:another}
\begin{split}
&xy-yx = [x,y] \text{ for all } x,y\in\g,  \text{ $J$ is linear in }  x\in\g, \quad
   J([x,y]) = [x, J(y)],
\\
  &\big[J(x), J([y,z])\big] + \big[J(z), J([x,y])\big] + \big[J(y), J([z,x])\big]\\
  &\hskip3.5cm = \sum_{a,b,c\in \Lambda} \big([x,\xi_a],\big[[y,\xi_b],[z,\xi_c]\big]\big) \big\{ \xi_a,\xi_b,\xi_c \big\},
\\
  &\big[[J(x), J(y)], [z, J(w)]\big] + \big[[J(z), J(w)], [x, J(y)]\big]\\
 &=  \sum_{a,b,c\in \Lambda} \Big(\big([x,\xi_a],\big[[y,\xi_b],[[z,w],\xi_c]\big]\big) \\&
 \hskip3.5cm + \big([z,\xi_a],\big[[w,\xi_b],[[x,y],\xi_c]\big]\big) \Big)\big\{ \xi_a,\xi_b,J(\xi_c) \big\},
\end{split}
\end{equation}
where $\{ \xi_a \}_{a \in \Lambda}$ is an orthonormal basis of $\g$, $\Lambda$ being a fixed indexing set of size $\dim \g$, and $\{ \xi_a,\xi_b,\xi_c \} = \frac{1}{24} \sum_{\pi\in \fkS_3} \xi_{\pi(a)} \xi_{\pi(b)} \xi_{\pi(c)}$, $\fkS_3$ being the group of permutations of $\{ a,b,c\}$.
\end{dfn}

The Yangian $\Y$ is a Hopf algebra with the coproduct determined by
\beq\label{copro-J}
\Delta(x)=x\otimes 1+1\otimes x,\quad \Delta(J(x))=J(x)\otimes 1+1\otimes J(x)+\tfrac12[x\otimes 1,\Omega_\g].
\eeq

Later, Drinfeld introduced a new presentation of $\Y$, now known as the Drinfeld (new/current) presentation, which is described below.

Given two elements $x,y$ in an associative algebra $\mathcal A$, set $\{x,y\}:=xy+yx$.

\begin{dfn}[\cite{Dri87}]
\label{def:Yangian}The Yangian $\Y$ is the unital associative algebra with generators $x_{i
  ,r}^\pm,\xi_{i,r}$ for $i\in \I, r\in\bN$ subject to the following
defining relations: 
\begin{align}
  [\xi_{i,r}, \xi_{j,s}]& = 0, \label{eq:relHH}\\
  [\xi_{i,0}, x^\pm_{j,s} ]& = \pm (\alpha_i,\alpha_j) x^\pm_{j,s},
  \label{eq:relHX}
\\
  [x^+_{i,r}, x_{j,s}^-]& = \delta_{ij} \xi_{i, r+s}, \label{eq:relXX}
\\
  [\xi_{i, r+1}, x^\pm_{j,s}] - [\xi_{i,r}, x^\pm_{j, s+1}]& =
  \pm\tfrac12(\alpha_i,\alpha_j) \big\{
    \xi_{i,r},x^\pm_{j,s}
  \big\}, \label{eq:relexHX}
\\
  [x^\pm_{i, r+1}, x^\pm_{j, s}] - [x^\pm_{i, r}, x^\pm_{j, s+1}] &= 
  \pm\tfrac12(\alpha_i,\alpha_j) \big\{
    x^\pm_{i,r},x^\pm_{j,s}\big\},
   \label{eq:relexXX}
\\
  \sum_{\sigma\in \fkS_m}
   \Big[x^\pm_{i, r_{\sigma(1)}}, \big[ x^\pm_{i, r_{\sigma(2)}}, \cdots,\,
       &[x^\pm_{i, r_{\sigma(m)}}, x^\pm_{j, s}] \cdots \big]\Big]
   = 0
   \quad \text{if $i\neq j$,}
 \label{eq:relDS}
\end{align}
where $m = 1 - c_{ij}$.
\end{dfn}

Set
\beq\label{tlxidef}
\tl \xi_{i,1}:=\xi_{i,1}-\tfrac12\xi_{i,0}^2,
\eeq 
then by \eqref{eq:relHX} and \eqref{eq:relexHX} we have
\beq\label{tlxicom}
\big[\tl \xi_{i,1},x_{j,r}^\pm\big]=\pm(\alpha_i,\alpha_j)x_{j,r+1}^\pm.
\eeq 

The universal enveloping algebra $\mathrm{U}(\g)$ is identified as a subalgebra of $\Y$ via the map $\xi_i\mapsto \xi_{i,0}$, $x_i^{\pm}\mapsto x_{i,0}^\pm$ for $i\in\I$. 

The isomorphism between this presentation and the one provided in Definition \ref{def:Yangian} is given by (see \cite[Theorem~1]{Dri87} and \cite[Theorem~2.16]{GRW19})
\begin{equation}\label{eq:J}
  \begin{split}
  x^\pm_i & \mapsto x^\pm_{i,0}, \hskip1.7cm \xi_i \mapsto \xi_{i,0},
\\
  J(\xi_i) & \mapsto \xi_{i,1} + v_i, \hskip0.9cm
  v_i := \tfrac14 \sum_{\alpha\in\Phi^+}
  (\alpha,\alpha_i) \{x^+_\alpha, x^-_\alpha\}
  - \tfrac12 \xi_i^2,
\\
  J(x^\pm_i) &\mapsto x^\pm_{i, 1} + w^\pm_i, \hskip0.6cm
  w^\pm_i := \pm \tfrac14
  \sum_{\alpha\in\Phi^+} \left\{[x_i^\pm,x^\pm_\alpha],x^\mp_\alpha\right\} 
  - \tfrac14\{x^\pm_i,\xi_i\}.
  \end{split}
\end{equation}
In terms of the Drinfeld presentation, the coproduct defined in \eqref{copro-J} satisfies
\begin{align}
&\Delta(\xi_i)=\xi_i\otimes 1+1\otimes \xi_i,\qquad ~~~~~\Delta(x_i^\pm)=x_i^\pm\otimes 1+1\otimes x_i^\pm,\label{copro-efh}\\
&\Delta(\xi_{i,1})=\xi_{i,1}\otimes 1+1\otimes \xi_{i,1}+\xi_{i}\otimes \xi_{i}-\sum_{\alpha\in\Phi^+}(\alpha,\alpha_i)x_{\alpha}^-\otimes x_\alpha^+,\label{coproxii1}\\
&\Delta(x_{i,1}^+)=x_{i,1}^+\otimes 1+1\otimes x_{i,1}^++\xi_{i}\otimes x_{i}^+-\sum_{\alpha\in\Phi^+}x_{\alpha}^-\otimes [x_{i}^+,x_\alpha^+],\\
&\Delta(x_{i,1}^-)=x_{i,1}^-\otimes 1+1\otimes x_{i,1}^-+x_{i}^-\otimes \xi_{i}+\sum_{\alpha\in\Phi^+}[x_{i}^-,x_\alpha^-]\otimes x_\alpha^+.\label{copro-xi-}
\end{align}

The coproduct for Drinfeld generators of higher degrees has the following estimates. 

Let $\Y^{\gge 0}$ (resp. $\Y^{\lle 0}$) be the Borel subalgebra of $\Y$ generated by the elements $\xi_{i,r}$, $x_{i,r}^+$ (resp $x_{i,r}^-$) for $i\in\I$ and $r\in\bN$. Let $Q$ be the root lattice. Set
\[
Q_+:=\Big\{\sum_{i\in\I}k_i\alpha_i~\Big|~k_i\in\bN,\sum_{i\in \I}k_i>0\Big\},\qquad Q_-:=-Q_+.
\]
It is well known that $\Y$ is $Q$-graded by setting
\[
\mathrm{deg}\,\xi_{i,r}=0,\qquad \mathrm{deg}\,x_{i,r}^\pm=\pm\alpha_i. 
\]
For $\alpha\in Q$ and $\mathsf S\subset Q$, denote
\[
\Y_{\alpha} :=\{w\in \Y\mid \deg w=\alpha\},\qquad \Y_{\mathsf S} :=\mathrm{span}\{w\in \Y\mid \deg w\in \mathsf S\}.
\]
In particular, $\Y_{Q_+}$ is the subalgebra of $\Y$ spanned by homogeneous elements of degree in $Q_+$. Similarly, one can define these notations for the subalgebras $\Y^{\gge 0}$ and $\Y^{\lle 0}$.

Set
\beq\label{xix_iu}
\xi_i(u)=1+\sum_{r\gge 0}\xi_{i,r}u^{-r-1},\qquad  x_i^\pm(u)=\sum_{r\gge 0}x_{i,r}^\pm u^{-r-1}.
\eeq

The following well-known statement can be found in \cite[Proposition 2.9]{GW23} and {\cite[Lemma 2.5]{HZ24}}. This is an improvement of \cite[Lemma 1]{Kn95} and \cite[Proposition 2.8]{CP91}.
\begin{lem}\label{lem:copro}
We have
\begin{align*}
&\Delta(\xi_i(u))\equiv \xi_i(u)\otimes \xi_i(u) &\pmod{\Y^{\lle 0}_{Q_-}\otimes \Y^{\gge0}_{Q_+}[\![u^{-1}]\!]},\\
&\Delta(x_i^+(u)) \equiv x_i^+(u)\otimes 1+\xi_i(u)\otimes x_i^+(u) &\pmod{\Y^{\lle 0}_{Q_-}\otimes \Y^{\gge0}_{\alpha_i+Q_+}[\![u^{-1}]\!]},\\
&\Delta(x_i^-(u)) \equiv x_i^-(u)\otimes \xi_i(u)+1\otimes x_i^-(u) &\pmod{\Y^{\lle 0}_{-\alpha_i+Q_-}\otimes \Y^{\gge0}_{Q_+}[\![u^{-1}]\!]}.
\end{align*}
\end{lem}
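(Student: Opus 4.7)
The plan is to prove the three congruences by induction on the index $r$ governing the coefficient of $u^{-r-1}$, treating them in parallel. The base cases $r=0$ and $r=1$ follow directly from the explicit coproduct formulas \eqref{copro-efh}--\eqref{copro-xi-}: after extracting the relevant coefficients of $u^{-1}$ and $u^{-2}$ from each target series, the principal parts coincide with the low-degree coproducts, and each residual term lies in the prescribed ideal by weight considerations. For instance, the congruence for $\xi_i(u)$ at order $u^{-2}$ reduces to $-\sum_{\alpha\in\Phi^+}(\alpha,\alpha_i)\,x_\alpha^-\otimes x_\alpha^+\in\Y^{\lle 0}_{Q_-}\otimes\Y^{\gge 0}_{Q_+}$, which is immediate from $x_\alpha^-\in\Y^{\lle 0}_{-\alpha}$ and $x_\alpha^+\in\Y^{\gge 0}_{\alpha}$.

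For the inductive step I would use the commutator recursion
\[
x_{i,r+1}^\pm = \pm(\alpha_i,\alpha_i)^{-1}\bigl[\tl\xi_{i,1},\,x_{i,r}^\pm\bigr],\qquad \xi_{i,r+1}=\bigl[x_{i,0}^+,\,x_{i,r+1}^-\bigr],
\]
coming from \eqref{tlxicom} and \eqref{eq:relXX}, together with the coproduct
\[
\Delta(\tl\xi_{i,1})=\tl\xi_{i,1}\otimes 1+1\otimes\tl\xi_{i,1}-\sum_{\alpha\in\Phi^+}(\alpha,\alpha_i)\,x_\alpha^-\otimes x_\alpha^+
\]
derived from \eqref{coproxii1} and \eqref{tlxidef}. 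Writing the inductive hypothesis as $\Delta(x_{i,r}^\pm)=M_r^\pm+E_r^\pm$, where $M_r^\pm$ is the prescribed principal part (the $u^{-r-1}$ coefficient of $x_i^+(u)\otimes 1+\xi_i(u)\otimes x_i^+(u)$ in the $+$ case, resp.\ its $-$ variant) and $E_r^\pm$ lies in the target ideal, I would compute $[\Delta(\tl\xi_{i,1}),\,M_r^\pm+E_r^\pm]$. The commutator of the principal piece $\tl\xi_{i,1}\otimes 1+1\otimes\tl\xi_{i,1}$ with $M_r^\pm$ reproduces all but one summand of $(\alpha_i,\alpha_i)\,M_{r+1}^\pm$; the missing diagonal summand, namely $(\alpha_i,\alpha_i)\,\xi_{i,r}\otimes x_i^+$ in the $+$ case and $-(\alpha_i,\alpha_i)\,x_i^-\otimes\xi_{i,r}$ in the $-$ case, is supplied precisely by the $\alpha=\alpha_i$ summand of the correction, via the identities $[x_i^-,x_{i,r}^+]=-\xi_{i,r}$ and $[x_i^+,x_{i,r}^-]=\xi_{i,r}$. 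The $\xi_i(u)$ case then follows from the bracket $\xi_{i,r+1}=[x_{i,0}^+,\,x_{i,r+1}^-]$ once the estimate for $x_i^-(u)$ has been secured.

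All remaining contributions must be shown to stay in the target ideal. For summands of the correction at $\alpha\neq\alpha_i$ commuted with $M_r^\pm$, the key observation is that if $\alpha$ has zero $\alpha_i$-coefficient then $[x_\alpha^-,\,x_{i,r}^+]=0$ by iterated Jacobi (each innermost $[x_j^-,\,x_{i,r}^+]=-\delta_{ji}\xi_{i,r}$ vanishes), while otherwise $\alpha-\alpha_i\in Q_+$ and $x_\alpha^+\in\Y^{\gge 0}_{\alpha_i+Q_+}$ automatically. Stability of the ideal under $[\Delta(\tl\xi_{i,1}),\cdot]$ is then routine: the principal piece acts componentwise by \eqref{tlxicom} without altering the Borel factorization, and tensor multiplication by $x_\alpha^-\otimes x_\alpha^+$ shifts the left weight by $-\alpha\in Q_-$ and the right weight by $\alpha\in Q_+$, each respecting the prescribed cone. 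The principal obstacle I anticipate is this bookkeeping of the diagonal summands---the same mechanism which forces the cross terms $\xi_i\otimes\xi_i$ and $\xi_i\otimes x_i^\pm$ into the low-degree formulas \eqref{coproxii1}--\eqref{copro-xi-}---and verifying that exactly one summand of the correction supplies the needed compensation at each inductive step.
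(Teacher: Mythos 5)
The paper itself does not prove this lemma; it simply cites \cite[Lemma 1]{Kn95} and \cite[Lemma 2.5]{HZ24}, and your inductive scheme (recursion via $\tl\xi_{i,1}$, the deviation term $-\sum_\alpha(\alpha,\alpha_i)x_\alpha^-\otimes x_\alpha^+$ of $\Delta(\tl\xi_{i,1})$, diagonal bookkeeping) is the standard route, and the $x_i^\pm(u)$ steps are essentially correct as you describe them. However, there is a genuine gap at the point where you claim the $\xi_i(u)$ estimate ``then follows'' from $\xi_{i,r+1}=[x_{i,0}^+,x_{i,r+1}^-]$. Writing $\Delta(x_{i,r+1}^-)=M_{r+1}^-+E_{r+1}^-$ with $E_{r+1}^-\in\Y^{\lle 0}_{-\alpha_i+Q_-}\otimes\Y^{\gge 0}_{Q_+}$, you must bracket with $\Delta(x_{i,0}^+)=x_i^+\otimes 1+1\otimes x_i^+$, and the term $[x_i^+\otimes 1,\,a\otimes b]=[x_i^+,a]\otimes b$ is not controlled by the stated congruence class: $[x_i^+,\Y^{\lle 0}]\not\subset\Y^{\lle 0}$ (already $[x_i^+,\xi_{j,s}]\in\Y^{\gge 0}_{\alpha_i}$, and for $a=\xi_{j,s}x^-_{k,t}$ the bracket has a genuinely positive factor that survives any PBW rewriting). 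So the target ideal $\Y^{\lle 0}_{Q_-}\otimes\Y^{\gge 0}_{Q_+}$ is not stable under this bracket, and ``routine'' does not cover the step; with only the induction hypothesis as you state it, the $\xi$ case does not close.

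The fix is to strengthen the induction hypothesis: record that the first tensor factor of $E_r^-$ lies not merely in $\Y^{\lle 0}$ but in the subalgebra generated by the $x^-_{j,s}$ alone (with weight in $-\alpha_i+Q_-$). This holds at $r=1$ by \eqref{copro-xi-}, and your own inductive step preserves it: the diagonal part of $\Delta(\tl\xi_{i,1})$ acts on that subalgebra via \eqref{tlxicom} and Leibniz, the correction term multiplies the first factor by $x_\alpha^-$, and the new errors produced from $M_r^-$ have first factors $[x_\alpha^-,x^-_{i,r}]$, $x_\alpha^-$, $x_\alpha^-x^-_{i,s}$, $x^-_{i,s}x_\alpha^-$. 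With this, $[x_{i,0}^+,a]$ is, by Leibniz and \eqref{eq:relXX}, a sum of monomials in the $x^-_{j,s}$ with one factor replaced by some $\xi_{i,s}$, hence lies in $\Y^{\lle 0}_{Q_-}$, and the deduction of the $\xi_i(u)$ estimate goes through. A smaller instance of the same omission occurs in your $x^+$ step for $\alpha\neq\alpha_i$ containing $\alpha_i$: you verify only that $x_\alpha^+\in\Y^{\gge 0}_{\alpha_i+Q_+}$, but you also need $[x_\alpha^-,x^+_{i,r}]\in\Y^{\lle 0}_{Q_-}$; this does hold, by the same iterated Jacobi computation you invoke (every surviving term is built from one $\xi_{i,r}$ and degree-zero negative root vectors), but it should be said.
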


Define a filtration on $\Y$ by setting $\mathrm{deg}\,\xi_{i,r}=\mathrm{deg}\,x_{i,r}^\pm=r$ and denote by $\gr\,\Y$ the associated graded algebra of $\Y$. Let $\bar\xi_{i,r},\bar x_{i,r}^\pm$ be the images for of $ \xi_{i,r}, x_{i,r}^\pm$ in the $r$-th component of $\mathrm{gr}\,\Y$, respectively. Then it is well known that there exists a graded Hopf algebra isomorphism
\beq\label{ass}
\begin{split}
\rho:~&\mathrm{U}(\g[z])\stackrel{\cong\,}{\longrightarrow} \mathrm{gr}\,\Y, \\
&\xi_iz^r\mapsto \bar\xi_{i,r},\qquad x_i^{\pm }z^r \mapsto \bar x_{i,r}^\pm,\qquad  (i\in \I,r\in\bN).
\end{split}
\eeq

The Yangian $\Y$ has an anti-involution $\mathfrak T$ given by
\beq\label{eq:tau}
\mathfrak T:\Y\to\Y,\quad \xi_{i,r}\mapsto \xi_{i,r},\quad x_{i,r}^\pm \mapsto x_{i,r}^\mp.
\eeq
Clearly, we have 
\beq\label{eq:tauJ}
\mathfrak T(J(\xi_i))=J(\xi_i),\qquad \mathfrak T (J(x_i^\pm))=J(x_i^\mp).
\eeq
Moreover,
\beq\label{eq:tau-copro}
\Delta \circ \mathfrak T =(\mathfrak T\otimes \mathfrak T)\circ \Delta^{\mathrm{op}},
\eeq
where $ \Delta^{\mathrm{op}}$ is the opposite coproduct.

\subsection{Twisted Yangians}\label{sec:ty}
Now let us recall the twisted Yangians in $J$ presentation from \cite{Mac02,BR17} and twisted Yangians of split types in Drinfeld presentation from \cite{LWZ25affine}. 

Twisted Yangians for arbitrary symmetric pairs were defined as coideal subalgebras of $\Y$ via $J$ presentation in \cite{Mac02}. It has been further studied in \cite{BR17} via homogeneous quantization, where an explicit set of generators with defining relations similar to Definition \ref{def:Y-J} was obtained.
\begin{dfn}[\cite{Mac02,BR17}]\label{def:tY-J}
The twisted Yangian $\YiJ$ in the $J$ presentation is the subalgebra of $\Y$ generated by all the elements
\[
x,\quad B(y):=J(y)-\tfrac14[y,C_{\mathfrak k}],
\]
where $x\in \mathfrak k$ and $y\in\mathfrak p$. 
\end{dfn}
Note that $\YiJ$ is a \textit{right} coideal subalgebra of $\Y$, i.e. $\Delta(\YiJ)\subset \YiJ\otimes \Y$. Indeed, it follows from Lemma  \ref{lem:omega} and \eqref{copro-J} that
\begin{align*}
&\Delta(x)=x\otimes 1+1\otimes x,\\
&\Delta(B(y))=B(y)\otimes 1+1\otimes B(y)-[1\otimes y,\Omega_{\mathfrak k}].
\end{align*}
Moreover, this definition works for an arbitrary involution $\omega$ (cf. Remark \ref{rem:inv}) and hence can be used to define twisted Yangians associated to arbitrary symmetric pairs.

In \cite[Theorem~5.5]{BR17}, the definition of the twisted Yangian in its $J$ presentation is slightly different. Explicitly, the twisted Yangian $\Yi_{\mathscr J}$ is the subalgebra of $\Y$ generated by all the elements 
\[
x,\quad \mathscr B(y):=J(y)+\tfrac14[y,C_{\mathfrak k}],
\]
where $x\in \mathfrak k$ and $y\in\mathfrak p$. Namely, $B(y)$ is replaced by $\mathscr B(y)$ (the plus is changed to minus). Consequently, the twisted Yangian $\Yi_{\mathscr J}$ is a \textit{left} coideal subalgebra of $\Y$ as
\[
\Delta(\mathscr B(y))=\mathscr B(y)\otimes 1+1\otimes \mathscr B(y)+[y\otimes 1,\Omega_{\mathfrak k}]\in \Y\otimes \Yi_{\mathscr J}.
\]
The two different subalgebras $\YiJ$ and $\Yi_{\mathscr J}$ of the Yangian $\Y$ are related by the anti-involution $\mathfrak T$ defined in \eqref{eq:tau}: $\mathfrak T(\YiJ)=\Yi_{\mathscr J}$. This can be easily seen from \eqref{eq:tauJ} and \eqref{eq:tau-copro}.

Recently, a Drinfeld presentation for twisted Yangians of split types was proposed in \cite{LWZ25affine}. Recall that $d_i=\hf (\alpha_i,\alpha_i)$ for $i\in \I$.

\begin{dfn}[{\cite[Definition~4.1]{LWZ25affine}}]\label{def:YN}
The twisted Yangian $\Yi$ in Drinfeld presentation is the unital associative algebra generated by $h_{i,2r+1},b_{i,r}$, for $i\in \I$ and $r\in\bN$, subject to the following relations \eqref{ty0}--\eqref{ty2}:
\begin{align}
\label{ty0}
[h_{i,r},h_{j,s}] &=0,\\
\label{ty1}
[h_{i,r+1},b_{j,s}]-[h_{i,r-1},b_{j,s+2}]
&=(\alpha_i,\alpha_j) \{ h_{i,r-1},b_{j,s+1}\}+ \tfrac14(\alpha_i,\alpha_j)^2[h_{i,r-1},b_{j,s}],
\\
\label{ty2}
[b_{i,r+1},b_{j,s}]-[b_{i,r},b_{j,s+1}]&=\tfrac12(\alpha_i,\alpha_j) \{ b_{i,r},b_{j,s}\}-2\delta_{ij} (-1)^s h_{i,r+s+1},
\end{align}
and the finite Serre type relations \eqref{fSerre0}--\eqref{fSerre3},
\begin{align}
\label{fSerre0}
 [b_{i,0},b_{j,0}] 
&=0 &(c_{ij}=0),
\\
\label{fSerre1}
\mathrm{ad}(b_{i,0})^{2}(b_{j,0})
&=- d_ib_{j,0} &(c_{ij}=-1),
\\
\label{fSerre2}
\mathrm{ad}(b_{i,0})^{3}(b_{j,0})
 &= - 4d_i [b_{i,0},b_{j,0}] & (c_{ij}=-2),\\
\mathrm{ad}(b_{i,0})^{4}(b_{j,0})
&= -10 d_i\big[b_{i,0},[b_{i, 0},b_{j,0}]\big] -9d_i^2 b_{j,0} & (c_{ij}=-3).\label{fSerre3}
\end{align}
where $h_{i,-1}=1$, $h_{i,r}=0$ if $r$ is even or $r<-1$. 
\end{dfn}

Note that we normalized the generators from \cite{LWZ25affine} so that the presentation is parallel to the Drinfeld presentation of $\Y$ in Definition \ref{def:Yangian}.

One of our main results (Theorem \ref{thm:embedmaintext}) will show that $\Yi$ is isomorphic to $\YiJ$ and hence a right coideal subalgebra of $\Y$, justifying $\Yi$ as a ``twisted Yangian".

\medskip

We recall some properties of the twisted Yangians that will be used later.

\begin{lem}\label{lem:generate}
The twisted Yangian $\Yi$ is generated by $h_{i,1},b_{i,0}$ for $i\in \I$.
\end{lem}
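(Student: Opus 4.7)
The plan is to show that the generators $b_{i,r}$ and $h_{i,2r+1}$ for all $r \geq 0$ can be recursively extracted from the subalgebra $\langle h_{i,1}, b_{i,0} : i \in \I\rangle$, leveraging the convention $h_{i,-1}=1$ to get a clean commutator action of $h_{i,1}$ on the $b$-generators.

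First I would specialize relation \eqref{ty1} at $r=0$. Since $h_{i,-1}=1$, all terms involving $h_{i,-1}$ bracketed with a $b$-generator vanish, and $\{h_{i,-1}, b_{j,s+1}\} = 2b_{j,s+1}$. This collapses \eqref{ty1} to the identity
\[
[h_{i,1}, b_{j,s}] = 2(\alpha_i,\alpha_j)\, b_{j,s+1}.
\]
Setting $i=j$ and using $(\alpha_i,\alpha_i) = 2d_i \neq 0$ yields the raising formula
\[
b_{i,s+1} = \frac{1}{4 d_i}\,[h_{i,1}, b_{i,s}],
\]
so a straightforward induction on $s$ shows that every $b_{i,s}$, $s \geq 0$, belongs to the subalgebra generated by $\{h_{j,1}, b_{j,0} : j \in \I\}$ (in fact, only by $h_{i,1}$ and $b_{i,0}$).

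Next I would extract the odd $h$-generators from relation \eqref{ty2}. Taking $i=j$, $r=0$ and $s=2k$ gives
\[
h_{i,2k+1} = \tfrac12(-1)^{2k}\!\left(\tfrac12(\alpha_i,\alpha_i)\{b_{i,0}, b_{i,2k}\} - [b_{i,1}, b_{i,2k}] + [b_{i,0}, b_{i,2k+1}]\right),
\]
(up to an overall sign choice dictated by \eqref{ty2}), which expresses $h_{i,2k+1}$ as a polynomial in the $b_{i,s}$'s. Combined with the previous step, this shows that every $h_{i,2k+1}$ lies in the subalgebra generated by $h_{i,1}$ and $b_{i,0}$.

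Since Definition \ref{def:YN} lists the generating set of $\Yi$ as $\{h_{i,2r+1}, b_{i,r} : i \in \I, r \in \bN\}$ and all such elements are now accounted for, the lemma follows. No serious obstacle is expected here: the entire argument is an elementary manipulation of the defining relations, and the only mild subtlety is remembering the $h_{i,-1}=1$ convention that makes the $r=0$ case of \eqref{ty1} yield the clean raising operator $[h_{i,1},-\,]$ on the $b$-tower.
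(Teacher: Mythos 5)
Your proposal is correct and follows exactly the paper's argument: specialize \eqref{ty1} at $r=0$ (using $h_{i,-1}=1$) to get $[h_{i,1},b_{j,s}]=2(\alpha_i,\alpha_j)b_{j,s+1}$ and generate all $b_{j,r}$ inductively, then solve \eqref{ty2} with $i=j$ for $h_{i,r+s+1}$ to obtain the odd $h$-generators. The explicit formula you give for $h_{i,2k+1}$ is consistent with \eqref{ty2}, so there is nothing to add.
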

\begin{proof}
Indeed, from \eqref{ty1} by setting $r=0$, we have
\beq\label{eq:hi1bjr-new}
[h_{i,1},b_{j,r}]=2(\alpha_i,\alpha_j)b_{j,r+1},
\eeq
which implies all $b_{j,r}$ for $j\in \I$ and $r\in\bN$ can be generated by $h_{i,1},b_{i,0}$ for $i\in \I$. Then it follows from \eqref{ty2} that all $h_{i,2r+1}$ can also be generated by $h_{i,1},b_{i,0}$ for $i\in \I$.  
\end{proof}

\begin{lem}\label{lem:hbhb}
The following relation holds in $\Yi$,
\[
\Big[h_{i,1},\big[b_{i,1},[h_{i,1},b_{i,1}]\big]\Big]=(\alpha_i,\alpha_i)^2\big[b_{i,1}^2,h_{i,1}\big].
\]
\end{lem}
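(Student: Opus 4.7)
The plan is to expand both sides using the two basic consequences of the defining relations that pull $h_{i,1}$ inside through $b_{i,r}$, namely \eqref{eq:hi1bjr-new} and the specialization of \eqref{ty2} with $i=j$. Once everything is reduced to expressions in $b_{i,1}, b_{i,2}, b_{i,3}$, the two sides should match after one application of the Jacobi identity.

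More precisely, applying \eqref{eq:hi1bjr-new} twice on the LHS gives
\[
[h_{i,1},b_{i,1}] = 2(\alpha_i,\alpha_i)\,b_{i,2},
\qquad\text{so}\qquad
\big[h_{i,1},\big[b_{i,1},[h_{i,1},b_{i,1}]\big]\big] = 2(\alpha_i,\alpha_i)\,\big[h_{i,1},[b_{i,1},b_{i,2}]\big].
\]
The Jacobi identity combined with \eqref{eq:hi1bjr-new} then rewrites the inner bracket as
\[
\big[h_{i,1},[b_{i,1},b_{i,2}]\big]
= \big[[h_{i,1},b_{i,1}],b_{i,2}\big] + \big[b_{i,1},[h_{i,1},b_{i,2}]\big]
= 2(\alpha_i,\alpha_i)\big([b_{i,2},b_{i,2}]+[b_{i,1},b_{i,3}]\big)
= 2(\alpha_i,\alpha_i)[b_{i,1},b_{i,3}].
\]
So the LHS equals $4(\alpha_i,\alpha_i)^2[b_{i,1},b_{i,3}]$.

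For the RHS, write $[b_{i,1}^2,h_{i,1}] = -\{b_{i,1},[h_{i,1},b_{i,1}]\} = -2(\alpha_i,\alpha_i)\{b_{i,1},b_{i,2}\}$, again via \eqref{eq:hi1bjr-new}. Therefore the identity to prove collapses to
\[
2[b_{i,1},b_{i,3}] = -(\alpha_i,\alpha_i)\{b_{i,1},b_{i,2}\}.
\]
This is obtained from relation \eqref{ty2} with $j=i$ and $(r,s)=(2,1)$: the term $-2(-1)^{s}h_{i,r+s+1}$ becomes $2h_{i,4}$ which vanishes since $h_{i,2k}=0$ by the conventions in Definition~\ref{def:YN}, and $[b_{i,2},b_{i,2}]=0$, so the relation reduces to exactly the displayed identity (up to a harmless sign from swapping the bracket).

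There is no real obstacle here: the statement is an elementary manipulation using only \eqref{ty2} and \eqref{eq:hi1bjr-new}, together with the parity convention $h_{i,\mathrm{even}}=0$. The only point of care is keeping track of the signs in the antisymmetrization of \eqref{ty2} so that the cubic-in-$b$ terms telescope correctly and the anticommutator appears with the right coefficient.
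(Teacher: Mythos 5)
Your argument is correct: every step checks out, and the reduction to $2[b_{i,1},b_{i,3}]=-(\alpha_i,\alpha_i)\{b_{i,1},b_{i,2}\}$ is indeed exactly relation \eqref{ty2} with $i=j$ and $(r,s)=(2,1)$, using $h_{i,4}=0$ and $[b_{i,2},b_{i,2}]=0$. The route, however, is slightly different from the paper's. The paper first uses \eqref{ty2} at $(r,s)=(1,1)$ to write $[b_{i,1},b_{i,2}]=-\tfrac12(\alpha_i,\alpha_i)b_{i,1}^2-h_{i,3}$, then applies $[h_{i,1},\cdot\,]$ and invokes \eqref{ty0} to kill the $h_{i,3}$ term, so that the identity is visibly equivalent to $[h_{i,1},h_{i,3}]=0$; this is conceptually aligned with the role of the extra relation \eqref{eq:add-rel} in the minimalistic presentation (compare Lemma \ref{lem:ext}, where at the graded level the extra relation is shown to be equivalent to $[\bh_{i,1},\bh_{i,3}]=0$). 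You instead push $h_{i,1}$ one level deeper via the Jacobi identity and \eqref{eq:hi1bjr-new}, landing on $[b_{i,1},b_{i,3}]$, and close the argument with \eqref{ty2} at $(2,1)$; this avoids \eqref{ty0} altogether, with the vanishing of $h_{i,4}$ built into the defining relation doing the work that $[h_{i,1},h_{i,3}]=0$ does in the paper. Both proofs are equally elementary; the paper's version has the small expository advantage of making the link between \eqref{eq:add-rel} and $[h_{i,1},h_{i,3}]=0$ explicit, while yours shows the relation also follows without ever touching the $h$--$h$ commutativity.
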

\begin{proof}
By \eqref{ty2}, we have
\[
[b_{i,2},b_{i,1}]-[b_{i,1},b_{i,2}]=(\alpha_i,\alpha_i)b_{i,1}^2 +2h_{i,3}.
\]
Therefore,
\beq\label{bi1bi2}
[b_{i,1},b_{i,2}]=-\hf (\alpha_i,\alpha_i)b_{i,1}^2-h_{i,3}.
\eeq
Now, we have 
\begin{align*}
\Big[h_{i,1},\big[b_{i,1},[h_{i,1},b_{i,1}]\big]\Big]&\stackrel{\eqref{eq:hi1bjr-new}}{=}2(\alpha_i,\alpha_i)\big[h_{i,1},[b_{i,1},b_{i,2}]\big]\\
&\stackrel{\eqref{bi1bi2}}{=}2(\alpha_i,\alpha_i)\big[h_{i,1},-\hf (\alpha_i,\alpha_i)b_{i,1}^2-h_{i,3}\big]\\
&\stackrel{\eqref{ty0}}{=}(\alpha_i,\alpha_i)^2\big[b_{i,1}^2,h_{i,1}\big].\qedhere
\end{align*}
\end{proof}

Extend the involution $\omega$ on $\g$ defined in \eqref{omega} to an involution $\check\omega $ on the current algebra $\g[z]$ by
\[
\check\omega: \g[z]\to \g[z],\qquad xz^r\mapsto \omega(x)(-z)^r,
\]
for $x\in \g$ and $r\in\bN$. We call $\g[z]^{\check\omega}$ a \textit{twisted current algebra}. The twisted current algebra $\g[z]^{\check\omega}$ has a basis given by
\[
\xi_iz^{2r+1},\qquad \big(x_\alpha^+-(-1)^rx_\alpha^-\big)z^r,\qquad (\alpha\in\Phi^+,r\in\bN).
\]

Define a filtration on $\Yi$ by setting $\mathrm{deg}\,h_{i,2r+1}=2r+1$ and $\mathrm{deg}\,x_{i,r}^\pm=r$. Denote by $\gr\,\Yi$ the associated graded algebra of $\Yi$. Let $\bar h_{i,2r+1},\bar b_{i,r}$ be the images of $ h_{i,2r+1}, b_{i,r}$ in the $(2r+1)$-st and $r$-th components of $\mathrm{gr}\,\Yi$, respectively. Then it is  known \cite[Corollary~4.13]{LWZ25affine} that there exists an algebra isomorphism
\beq\label{assi}
\begin{split}
\varrho:~&\mathrm{U}(\g[z]^{\check\omega})\stackrel{\cong\,}{\longrightarrow} \mathrm{gr}\,\Yi,\\
&2\xi_iz^{2r+1}\mapsto \bar h_{i,2r+1},\quad (x_i^+-(-1)^rx_i^-)z^r \mapsto \bar b_{i,r},\quad (i\in \I,r\in\bN).
\end{split}
\eeq
One consequence of the results proven later (e.g. Theorem \ref{thm:embedmaintext}) is that $\varrho$ can be upgraded to an algebra isomorphism of Hopf algebras.

\section{A Minimalistic presentation}\label{sec:main1}
\subsection{A Minimalistic presentation for $\Yi$}
Our first main result is the minimalistic presentation of twisted Yangian $\Yi$ in the spirit of \cite{Lev93gen,GNW18}.
\begin{thm}\label{thm:min-text}
If $\g$ is not of type $\mathsf A_1$, $\mathsf B_2\cong\mathsf C_2$, or $\mathsf G_2$, then the twisted Yangian $\Yi$ is isomorphic to the unital associative algebra generated by $\{h_{i,1},b_{i,0},b_{i,1}\}_{i\in \I}$, subject only to the relations:
\begin{align}
[h_{i,1},h_{j,1}]&=0,\label{redhh}\\
[h_{i,1},b_{j,0}]&=2(\alpha_i,\alpha_j)b_{j,1},\label{redhb}\\
[b_{i,1},b_{j,0}]-[b_{i,0},b_{j,1}]&=\tfrac12(\alpha_i,\alpha_j) \{b_{i,0},b_{j,0}\}-2\delta_{ij}h_{i,1},\label{redbb}
\end{align}
together with the finite Serre type relations \eqref{fSerre0}--\eqref{fSerre3}. If $\g$ is of type $\mathsf A_1$, $\mathsf B_2\cong\mathsf C_2$, or $\mathsf G_2$, then an additional relation
\beq\label{eq:add-rel}
\Big[h_{i,1},\big[b_{i,1},[h_{i,1},b_{i,1}]\big]\Big]=(\alpha_i,\alpha_i)^2\big[b_{i,1}^2,h_{i,1}\big]
\eeq
should be included for any single $i\in\I$.
\end{thm}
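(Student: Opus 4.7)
The plan is to follow the strategy developed for minimalistic presentations of Yangians in \cite{Lev93gen,GNW18}, adapted to the twisted setting. Let $\mathcal{A}$ be the algebra defined by the proposed minimalistic presentation. By Lemma \ref{lem:generate}, the elements $h_{i,1}, b_{i,0}$ already generate $\Yi$, while each of \eqref{redhh}--\eqref{redbb} is a specialization of \eqref{ty0}--\eqref{ty2} at $r=s=0$, the additional relation \eqref{eq:add-rel} was verified in $\Yi$ in Lemma \ref{lem:hbhb}, and the finite Serre relations \eqref{fSerre0}--\eqref{fSerre3} are common to both presentations. Consequently, there is a canonical surjective homomorphism $\pi: \mathcal{A} \twoheadrightarrow \Yi$, and the task reduces to establishing its injectivity.

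The next step is to introduce into $\mathcal{A}$ the full set of Drinfeld generators by the recursive rules
\[
b_{i,r+1} := \tfrac{1}{2(\alpha_i,\alpha_i)} [h_{i,1}, b_{i,r}],
\]
extrapolated from \eqref{eq:hi1bjr-new}, together with an analogous formula for $h_{i,2k+1}$ obtained by solving \eqref{ty2} at $(j,r,s) = (i,2k,0)$. One equips $\mathcal{A}$ with the filtration $\deg b_{i,r} = r$, $\deg h_{i,2r+1} = 2r+1$. A direct computation confirms that under the identification $\bar b_{i,0} \leftrightarrow x_i^+ - x_i^-$, $\bar h_{i,1} \leftrightarrow 2\xi_i z$, $\bar b_{i,1} \leftrightarrow (x_i^+ + x_i^-) z$, each of \eqref{redhh}--\eqref{redbb} and the finite Serre relations \eqref{fSerre0}--\eqref{fSerre3} degenerates in $\gr\,\mathcal{A}$ to a defining relation of the twisted current algebra $\g[z]^{\check\omega}$. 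This yields a surjection $\mathrm{U}(\g[z]^{\check\omega}) \twoheadrightarrow \gr\,\mathcal{A}$. Composing with $\gr\,\pi$ and the inverse of $\varrho$ from \eqref{assi},
\[
\mathrm{U}(\g[z]^{\check\omega}) \twoheadrightarrow \gr\,\mathcal{A} \xrightarrow{\gr\,\pi} \gr\,\Yi \xrightarrow{\varrho^{-1}} \mathrm{U}(\g[z]^{\check\omega})
\]
agrees with the identity on the natural generators of the current algebra. Every surjection is therefore a bijection, so $\gr\,\pi$, and hence $\pi$, is an isomorphism.

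The principal obstacle is showing that the higher-degree instances of \eqref{ty0}--\eqref{ty2} actually hold in $\mathcal{A}$ for the inductively defined generators, which is a delicate induction on degree. The subtle point concerns \eqref{eq:add-rel}: when the Dynkin diagram contains an $\mathsf{A}_2$ subdiagram at node $i$, a calculation in the spirit of Lemma \ref{lem:hi4}---using \eqref{redhb}, \eqref{fSerre1}, and the cross-brackets with a neighboring node---derives \eqref{eq:add-rel} from the minimalistic relations alone, rendering it redundant. In the exceptional low-rank cases $\mathsf{A}_1$, $\mathsf{B}_2 \cong \mathsf{C}_2$, and $\mathsf{G}_2$ no such subdiagram is available, explaining why \eqref{eq:add-rel} must be imposed explicitly; in ranks greater than one, having imposed it at a single node, the connectivity of the diagram and the bracket formula \eqref{eq:hi1bjr-new} propagate it to the remaining nodes, so imposing it at any single $i\in\I$ suffices.
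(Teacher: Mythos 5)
Your skeleton (a surjection $\pi\colon\mathcal{A}\twoheadrightarrow\Yi$, then a comparison of associated graded algebras with $\mathrm{U}(\g[z]^{\check\omega})$) is the same as the paper's, and the construction of $\pi$ and its surjectivity via Lemma \ref{lem:generate} and Lemma \ref{lem:hbhb} are fine. The gap is the sentence ``This yields a surjection $\mathrm{U}(\g[z]^{\check\omega})\twoheadrightarrow\gr\,\mathcal{A}$.'' Checking that the relations \eqref{redhh}--\eqref{redbb}, \eqref{fSerre0}--\eqref{fSerre3} (and \eqref{eq:add-rel}) degenerate in $\gr\,\mathcal{A}$ to identities that are \emph{valid} in $\g[z]^{\check\omega}$ only gives you a canonical surjection onto $\gr\,\mathcal{A}$ from the algebra $\mathcal{U}$ presented by those degenerate relations (equivalently, a map $\mathcal{U}\to \mathrm{U}(\g[z]^{\check\omega})$, which points the wrong way). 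To replace $\mathcal{U}$ by $\mathrm{U}(\g[z]^{\check\omega})$ you must know that these few degenerate relations already constitute a presentation of the twisted current algebra --- which is exactly Proposition \ref{prop:red}, the technical heart of the paper, proved there by the long degree induction (the recursive definitions \eqref{bhdef}, Lemmas \ref{lem:ext}--\ref{itoj}, the mod-$4$ case analysis establishing \eqref{todo1}--\eqref{todo3}, the $\mathsf A_2$-subdiagram argument of Lemma \ref{lem:hi4} making \eqref{eq:add-rel} redundant outside types $\mathsf A_1$, $\mathsf B_2\cong\mathsf C_2$, $\mathsf G_2$, and the propagation Lemma \ref{itoj}). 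In your proposal this entire body of work is only named, not carried out, so the essential content is missing.

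Moreover, where you locate the missing induction is inconsistent with your graded argument. You describe the ``principal obstacle'' as verifying the higher-degree instances of \eqref{ty0}--\eqref{ty2} for the recursively defined elements inside $\mathcal{A}$ itself, i.e.\ the direct route of \cite{Lev93gen,GNW18}; had you done that, you would obtain an inverse $\Yi\to\mathcal{A}$ to $\pi$ outright and the associated-graded step would be unnecessary, but you would then have to control all the lower-order (quadratic) corrections in the quantum relations. The paper's point is precisely to avoid this: the induction is performed once at the classical level, in the algebra presented by the degenerate relations (where \eqref{tcbb} has no correction term), and only afterwards is it fed into the short filtration argument of \S\ref{sec:pfthmA}. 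As written, your argument performs neither the classical induction nor the quantum one, so the key step --- that the minimalistic relations imply the full family of relations --- remains unproved; the remarks about the $\mathsf A_2$ subdiagram and the propagation to other nodes correctly identify what has to happen, but they are statements of Lemmas \ref{lem:hi4} and \ref{itoj}, not proofs of them.
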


The theorem will be proved in Section \ref{sec:pfthmA}. 

We remark that the additional relation \eqref{eq:add-rel} is analogous to \cite[(1.6)]{Lev93gen} for Yangians. Indeed, the relation \cite[(1.6)]{Lev93gen} essentially corresponds to the relation $[\xi_{i,1},\xi_{i,2}]=0$ in the Yangian $\Y$, which turns out to be redundant except for the case $\g=\mathfrak{sl}_2$, see \cite[\S3]{GNW18}. Similarly, from the proof of Lemma \ref{lem:hbhb}, the additional relation \eqref{eq:add-rel} is essentially the relation $[h_{i,1},h_{i,3}]=0$ (recall that $h_{i,2}=0$) in the twisted Yangian $\Yi$. We shall prove that the  relation \eqref{eq:add-rel} can be dropped when there is a subdiagram of type $\mathsf A_2$, see Lemma \ref{lem:hi4} below.

\subsection{Classical picture}
Instead of working directly on the Yangian as in \cite{Lev93gen,GNW18}, we exploit the idea used to prove  \cite[Theorem~4.14]{LWZ25affine} which greatly simplifies the problem by reducing it to the level of associated graded.

\begin{prop}\label{prop:red}
If $\g$ is not of type $\mathsf A_1$, $\mathsf B_2\cong\mathsf C_2$, or $\mathsf G_2$, then the algebra $\mathrm{U}(\g[z]^{\check\omega})$ is isomorphic to the unital associative algebra generated by the elements $\bh_{i,1}$, $\bb_{i,0}$,  $\bb_{i,1}$ for $i\in\I$ subject to only the relations:
\begin{align}
[\bh_{i,1},\bh_{j,1}]&=0,\label{tchh}\\
[\bh_{i,1},\bb_{j,0}]&=2(\alpha_i,\alpha_j)\bb_{j,1},\label{tchb}\\
[\bb_{i,1},\bb_{j,0}]&=[\bb_{i,0},\bb_{j,1}]-2\delta_{ij}\bh_{i,1},\label{tcbb}\\
\label{tcfSerre0}
 [\bb_{i,0},\bb_{j,0}] 
&=0 & (c_{ij}=0),
\\
\label{tcfSerre1}
\mathrm{ad}(\bb_{i,0})^{2}(\bb_{j,0})
&=- d_i\bb_{j,0} & (c_{ij}=-1),
\\
 \label{tcfSerre2}
\mathrm{ad}(\bb_{i,0})^{3}(\bb_{j,0})
 &= - 4d_i [\bb_{i,0},\bb_{j,0}] & (c_{ij}=-2),\\
\mathrm{ad}(\bb_{i,0})^{4}(\bb_{j,0})
&= -10 d_i\big[\bb_{i,0},[\bb_{i, 0},\bb_{j,0}]\big] -9d_i^2 \bb_{j,0}& (c_{ij}=-3).\label{tcfSerre3}
\end{align}
If $\g$ is of type $\mathsf A_1$, $\mathsf B_2\cong\mathsf C_2$, or $\mathsf G_2$, then an additional relation 
\beq\label{eq:add-rel-gr}
\Big[\bh_{i,1},\big[\bb_{i,1},[\bh_{i,1},\bb_{i,1}]\big]\Big]=0
\eeq
should be included for any single $i\in\I$.
\end{prop}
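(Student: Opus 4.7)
The plan is to prove Proposition~\ref{prop:red} by constructing an algebra homomorphism $\phi$ from the candidate algebra $\tilde U$, presented by the minimalistic generators and relations, to $\mathrm{U}(\g[z]^{\check\omega})$, and showing it is an isomorphism. Motivated by the identification \eqref{assi}, I would set $\phi(\bh_{i,1})=2\xi_i z$, $\phi(\bb_{i,0})=x_i^+-x_i^-$, and $\phi(\bb_{i,1})=(x_i^++x_i^-)z$. Well-definedness reduces to verifying each of \eqref{tchh}--\eqref{tcfSerre3}, together with \eqref{eq:add-rel-gr} in the small-rank cases, directly in $\mathrm{U}(\g[z]^{\check\omega})$; these are elementary Lie algebra computations that use only the decomposition $\g=\mathfrak k\oplus\mathfrak p$, the bracket relations \eqref{eq:kprel}, and the structure constant normalizations \eqref{eta}. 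In particular, a short calculation shows $[2\xi_i z,(x_i^++x_i^-)z]=2(\alpha_i,\alpha_i)(x_i^+-x_i^-)z^2$, from which \eqref{eq:add-rel-gr} follows by two further brackets collapsing to zero via $[\xi_i,\xi_i]=0$.

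Surjectivity of $\phi$ follows by iterating commutators: brackets among the $\phi(\bb_{i,0})$'s along the Dynkin diagram produce $x_\alpha^+-x_\alpha^-$ for every $\alpha\in\Phi^+$, and further brackets with $\phi(\bh_{i,1})$ and $\phi(\bb_{i,1})$ produce $(x_\alpha^+-(-1)^r x_\alpha^-)z^r$ and $\xi_i z^{2r+1}$ for all $r\in\bN$, which together span $\g[z]^{\check\omega}$ and hence generate $\mathrm{U}(\g[z]^{\check\omega})$.

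The heart of the argument is injectivity, which I would establish via a PBW-type argument in the spirit of \cite[Theorem~4.14]{LWZ25affine}. Inductively define in $\tilde U$ the lifted elements $\bb_{i,r+1}:=[\bh_{i,1},\bb_{i,r}]/(2(\alpha_i,\alpha_i))$ for $r\ge 1$, the odd Cartan generators $\bh_{i,2r+1}$ as the commutator analogue of \eqref{tcbb}, and non-simple root vectors $\bb_\alpha^{(r)}$ via nested commutators. Using only the minimalistic relations, one verifies that these lifted elements satisfy all the commutation relations of the twisted current algebra $\g[z]^{\check\omega}$. A standard straightening argument then shows that appropriately ordered monomials in these lifted generators span $\tilde U$; since their images under $\phi$ form a PBW basis of $\mathrm{U}(\g[z]^{\check\omega})$ by \eqref{assi} and the classical PBW theorem, $\phi$ must be injective.

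The main obstacle will be verifying that the lifted $\bh_{i,2r+1}$'s pairwise commute. The critical case is $[\bh_{i,1},\bh_{i,3}]=0$, which in the graded setting is precisely \eqref{eq:add-rel-gr}, since $\bh_{i,3}$ is extracted from $[\bb_{i,1},[\bh_{i,1},\bb_{i,1}]]$. This relation must therefore be imposed by hand when $\g$ is of type $\mathsf A_1$, $\mathsf B_2\cong\mathsf C_2$, or $\mathsf G_2$, as no $\mathsf A_2$ subdiagram through $i$ is available. For all other types, a separate lemma (analogous to the author's Lemma~\ref{lem:hi4} in the non-graded case) is expected to show that \eqref{eq:add-rel-gr} is already a consequence of the remaining minimalistic relations: choosing a neighbor $j$ with $c_{ij}=-1$, one rewrites the iterated bracket using the $\mathsf A_2$ Serre relation \eqref{tcfSerre1} together with the mixed $\mathrm{ad}(\bh_{j,1})$-action as a combination of brackets that vanish by \eqref{tchh}.
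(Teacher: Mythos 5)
Your overall architecture is the same as the paper's: a surjection from the minimally presented algebra onto $\mathrm{U}(\g[z]^{\check\omega})$ (your $\phi$ is the paper's $\psi$ from \eqref{eq:psi}), surjectivity via iterated brackets, and injectivity by showing that the lifted higher elements satisfy the full set of current-algebra relations. Your verification of \eqref{eq:add-rel-gr} inside $\mathrm{U}(\g[z]^{\check\omega})$ and the surjectivity sketch are fine. However, the heart of the proof is exactly the step you assert in one sentence: that ``using only the minimalistic relations, one verifies that these lifted elements satisfy all the commutation relations of the twisted current algebra.'' In the paper this is a genuinely delicate induction (establishing \eqref{todo1}--\eqref{todo3} by cases on $\ell$ modulo $4$, with telescoping identities such as $[\bb_{i,2l+1-r},\bb_{i,2l+1+r}]=(-1)^{r+1}r\,\mathfrak X$ and a double-counting argument forcing $\mathfrak X=0$, plus separate arguments for the mixed $i\neq j$ relations \eqref{bibi-shift}, \eqref{hi3bjr}), and nothing in your plan indicates how these higher-degree relations would be obtained. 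Moreover, your injectivity route via non-simple root vectors and a PBW straightening argument is heavier than necessary and would require verifying even more relations; the efficient path is to invoke the known Chevalley--Serre-type presentation of $\mathrm{U}(\g[z]^{\check\omega})$ in the generators $\sfh_{i,2r+1},\sfb_{i,r}$ (the paper's Lemma \ref{lem:full-gr}, from \cite{LWZ25affine}), so that once the relations \eqref{tchhf}--\eqref{tcbbf} are deduced for the lifted elements one gets an inverse homomorphism to $\psi$ directly, with no PBW basis needed.

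There is also a concrete inaccuracy in the one mechanism you do propose for the crucial relation $[\bh_{i,1},\bh_{i,3}]=0$ in higher rank. It is not enough to ``choose a neighbor $j$ with $c_{ij}=-1$'' for the given $i$: the paper's Lemma \ref{lem:hi4} needs a genuine $\mathsf A_2$ edge, i.e.\ $c_{ij}=c_{ji}=-1$ (the condition $c_{ij}=-1$ is used to get \eqref{bi2bi2bj0} and $c_{ji}=-1$ to kill \eqref{helper10}), and such an edge need not be incident to the node $i$ you care about --- think of the end node on the multiple bond in $\mathsf B_n$ or $\mathsf F_4$. One therefore also needs the propagation step (Lemma \ref{itoj}), which transports $[\bh_{i,1},\bh_{i,3}]=0$ across any edge with $(\alpha_i,\alpha_j)\neq 0$. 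Finally, the actual vanishing does not come from \eqref{tchh}; it comes from the Serre relation \eqref{tcfSerre1} applied at shifted degrees, namely $\big[\bb_{i,2},[\bb_{i,2},\bb_{j,0}]\big]=-d_i\bb_{j,4}$ followed by $\big[[\bb_{i,2},\bb_{j,0}],\bb_{j,0}\big]=-d_j\bb_{i,2}$. As written, your per-node argument would fail, so this step needs to be reworked along the lines just described.
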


We prove Proposition \ref{prop:red} in Section \ref{sec:pf-prop}. 

\subsection{Proof of Proposition \ref{prop:red}}\label{sec:pf-prop}
We first recall the following lemma from \cite[Proposition~2.9]{LWZ25affine}, which is closely related to Definition \ref{def:YN} and \eqref{assi}.
\begin{lem}\label{lem:full-gr}
The algebra $\mathrm{U}(\g[z]^{\check\omega})$ is isomorphic to the unital associative algebra generated by the elements $\sfb_{i,r}$ and $\sfh_{i,2r+1}$ for $i\in\I$ and $r\in\bN$ subject to the relations:
\begin{align}
[\sfh_{i,2r+1},\sfh_{j,2s+1}]&=0,\label{tchhf}\\
[\sfh_{i,2r+1},\sfb_{j,s}]&=2(\alpha_i,\alpha_j)\sfb_{j,2r+s+1},\label{tchbf}\\
[\sfb_{i,r+1},\sfb_{j,s}]-[\sfb_{i,r},\sfb_{j,s+1}]&=-\delta_{ij}\big((-1)^r+(-1)^s\big)\sfh_{i,r+s+1},\label{tcbbf}\\
\label{tcfSerre0f}
 [\sfb_{i,0},\sfb_{j,0}] 
&=0 & (c_{ij}=0),
\\
\label{tcfSerre1f}
\mathrm{ad}(\sfb_{i,0})^{2}(\sfb_{j,0})
&=- d_i\sfb_{j,0} & (c_{ij}=-1),
\\
\label{tcfSerre2f}
\mathrm{ad}(\sfb_{i,0})^{3}(\sfb_{j,0})
 &= - 4d_i [\sfb_{i,0},\sfb_{j,0}] & (c_{ij}=-2),\\
\mathrm{ad}(\sfb_{i,0})^{4}(\sfb_{j,0})
&= -10 d_i\big[\sfb_{i,0},[\sfb_{i, 0},\sfb_{j,0}]\big] -9d_i^2 \sfb_{j,0}& (c_{ij}=-3),\label{tcfSerre3f}
\end{align}
where $\sfh_{i,2r}=0$ for $r\in\bN$. Here the elements $\sfb_{i,r}$ and $\sfh_{i,2r+1}$ are identified as elements of $\g[z]^{\check\omega}$ by
\beq\label{eq:hb-explicit}
\sfh_{i,2r+1} \mapsto 2\xi_iz^{2r+1},\quad \sfb_{i,r}\mapsto (x_i^+-(-1)^rx_i^-)z^r.
\eeq
\end{lem}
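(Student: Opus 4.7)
The plan is to prove Proposition~\ref{prop:red} by constructing mutually inverse algebra maps between the minimalistic algebra $B$ presented in the statement and $A := \mathrm{U}(\g[z]^{\check\omega})$, using the ``maximalist'' presentation of $A$ from Lemma~\ref{lem:full-gr}.

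First I would build a surjection $\pi\colon B \to A$ by $\bh_{i,1} \mapsto \sfh_{i,1}$, $\bb_{i,0} \mapsto \sfb_{i,0}$, $\bb_{i,1} \mapsto \sfb_{i,1}$. Well-definedness is a finite check: the relations \eqref{tchh}--\eqref{tcbb} and \eqref{tcfSerre0}--\eqref{tcfSerre3} are immediate specializations of \eqref{tchhf}--\eqref{tcfSerre3f}, while \eqref{eq:add-rel-gr} reduces via \eqref{tchbf} and \eqref{tcbbf} (at $(r,s)=(1,1)$, which gives $[\sfb_{i,1},\sfb_{i,2}]=-\sfh_{i,3}$) to a nonzero scalar multiple of $[\sfh_{i,1},\sfh_{i,3}]$, which vanishes by \eqref{tchhf}. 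Surjectivity follows by iterating $[\bh_{i,1},-]$ on $\bb_{i,0}$ via \eqref{tchb} to reach all $\sfb_{i,r}$, and then extracting $\sfh_{i,2r+1}$ as commutators of the $\bb$'s using \eqref{tcbbf}.

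To construct the inverse $\psi\colon A \to B$, I would inductively define $\bb_{i,r+1} := \tfrac{1}{2(\alpha_i,\alpha_i)}[\bh_{i,1},\bb_{i,r}]$ and $\bh_{i,2r+1} := \tfrac12\big([\bb_{i,0},\bb_{i,2r+1}] - [\bb_{i,1},\bb_{i,2r}]\big)$ in $B$ (the $r=0$ case of the latter agreeing with the given $\bh_{i,1}$ by \eqref{tcbb} at $i=j$), and set $\psi(\sfb_{i,r}) := \bb_{i,r}$ and $\psi(\sfh_{i,2r+1}) := \bh_{i,2r+1}$. By Lemma~\ref{lem:full-gr}, it suffices to verify all of \eqref{tchhf}--\eqref{tcfSerre3f} in $B$. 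The mixed relations \eqref{tchbf} at $r>0$ follow from \eqref{tchb} by bracketing with $\bh_{i,1}$ via Jacobi and \eqref{tchh}; the higher Serre relations \eqref{tcfSerre0f}--\eqref{tcfSerre3f} descend from \eqref{tcfSerre0}--\eqref{tcfSerre3} by iterated $\bh_{i,1}$-bracketing; and \eqref{tcbbf} at general $(r,s)$ follows inductively from the base \eqref{tcbb} together with the definition of $\bh_{i,2r+1}$. Once all relations hold, $\psi\circ\pi$ and $\pi\circ\psi$ are the identity on generators.

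The main obstacle is the $\sfh$--$\sfh$ commutativity \eqref{tchhf}, starting from $[\bh_{i,1},\bh_{i,3}]=0$, which is not a consequence of \eqref{tchh}--\eqref{tcbb} alone. This is precisely what the extra relation \eqref{eq:add-rel-gr} supplies: mirroring the computation of Lemma~\ref{lem:hbhb} at the graded level (where the degree-$3$ correction $(\alpha_i,\alpha_i)^2[\bb_{i,1}^2,\bh_{i,1}]$ drops out below the leading degree $4$ and is absent in the classical relation \eqref{eq:add-rel-gr}), one shows $\big[\bh_{i,1},[\bb_{i,1},[\bh_{i,1},\bb_{i,1}]]\big]$ equals a nonzero multiple of $[\bh_{i,1},\bh_{i,3}]$, so \eqref{eq:add-rel-gr} is equivalent to the desired commutativity. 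When $\g$ is outside types $\mathsf A_1$, $\mathsf B_2 \cong \mathsf C_2$, and $\mathsf G_2$, every node $i$ sits in a subdiagram of type $\mathsf A_2$, and a Jacobi/Serre argument using a neighboring $\bb_{j,0}$ with $c_{ij}=-1$ (the classical analog of the Yangian-level Lemma~\ref{lem:hi4}) recovers \eqref{eq:add-rel-gr} from the other relations, so the extra relation can be dropped; in the three exceptional types no such neighbor exists and the relation must be imposed. Once this is in place, the remaining $\bh$--$\bh$ commutations $[\bh_{i,2r+1},\bh_{j,2s+1}]=0$ at arbitrary $(i,j,r,s)$ follow by a further induction on $r+s$ using \eqref{tchbf}, \eqref{tchh}, and the Jacobi identity, completing the verification and hence the proof.
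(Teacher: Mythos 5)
Your proposal does not address the statement it is supposed to prove. The statement is Lemma~\ref{lem:full-gr} itself, i.e.\ the assertion that $\mathrm{U}(\g[z]^{\check\omega})$ admits the ``full'' presentation with generators $\sfb_{i,r}$, $\sfh_{i,2r+1}$ for \emph{all} $r\in\bN$ and relations \eqref{tchhf}--\eqref{tcfSerre3f}, under the identification \eqref{eq:hb-explicit}. Your argument instead takes this lemma as an input (``using the maximalist presentation of $A$ from Lemma~\ref{lem:full-gr}'') and proves Proposition~\ref{prop:red}, the reduction of that presentation to the minimalistic one in degrees $0,1$. Assuming the statement to be shown and proving a different (downstream) result is a genuine gap: nothing in your proposal verifies that the abstractly presented algebra is actually isomorphic to $\mathrm{U}(\g[z]^{\check\omega})$. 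For what it is worth, the inductive machinery you describe (bracketing with $\bh_{i,1}$, the equivalence of \eqref{eq:add-rel-gr} with $[\bh_{i,1},\bh_{i,3}]=0$, the $\mathsf A_2$-subdiagram argument outside types $\mathsf A_1$, $\mathsf B_2\cong\mathsf C_2$, $\mathsf G_2$) is essentially the paper's own proof of Proposition~\ref{prop:red} in \S\ref{sec:pf-prop}, so it is sound for that proposition --- but it is beside the point here.

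Note also that the paper itself does not prove Lemma~\ref{lem:full-gr}; it is recalled from \cite{LWZ25affine}*{Proposition 2.9}. A self-contained proof would have to do two things that your proposal never touches: (i) check that the concrete elements $2\xi_i z^{2r+1}$ and $(x_i^+-(-1)^r x_i^-)z^r$ of $\g[z]^{\check\omega}$ satisfy \eqref{tchhf}--\eqref{tcfSerre3f} and generate $\mathrm{U}(\g[z]^{\check\omega})$ (generation requires producing the non-simple-root basis vectors $(x_\alpha^+-(-1)^r x_\alpha^-)z^r$ from brackets of the $\sfb_{i,s}$), giving a surjection from the presented algebra onto $\mathrm{U}(\g[z]^{\check\omega})$; and (ii) injectivity, which is the substantive part: since all relations are Lie-type, the presented algebra is the enveloping algebra of a Lie algebra given by generators and relations, and one must bound that Lie algebra above, e.g.\ by a spanning/filtration or triangular-decomposition argument, to conclude it is no larger than $\g[z]^{\check\omega}$. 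Without such an argument (or an explicit appeal to \cite{LWZ25affine}), the lemma remains unproved in your write-up.
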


We have the following simple observation.
\begin{lem}\label{lem:gener}
We have $\sfb_{i,r+1}=\hf(\alpha_i,\alpha_i)^{-1}[\sfh_{i,1},\sfb_{i,r}]$ and $\sfh_{i,2r+1}=[\sfb_{i,0},\sfb_{i,2r+1}]$. In particular, the algebra $\mathrm{U}(\g[z]^{\check\omega})$ is generated by $\sfb_{i,0}$, $\sfb_{i,1}$, and $\sfh_{i,1}$.
\end{lem}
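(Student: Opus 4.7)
The plan is to exploit the explicit identification from \eqref{eq:hb-explicit}: since Lemma \ref{lem:full-gr} furnishes an isomorphism between $\mathrm{U}(\g[z]^{\check\omega})$ and the algebra presented abstractly by $\sfb_{i,r},\sfh_{i,2r+1}$ modulo \eqref{tchhf}--\eqref{tcfSerre3f}, it suffices to verify each asserted identity on whichever side of this isomorphism is more convenient.

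The first identity is immediate from \eqref{tchbf}: specializing the parameter $r$ there to $0$, taking $j=i$, and relabelling $s$ as $r$ gives $[\sfh_{i,1},\sfb_{i,r}] = 2(\alpha_i,\alpha_i)\sfb_{i,r+1}$, which rearranges to $\sfb_{i,r+1}=\hf(\alpha_i,\alpha_i)^{-1}[\sfh_{i,1},\sfb_{i,r}]$. For the second identity I would translate through \eqref{eq:hb-explicit}: the right-hand side maps to $[x_i^+-x_i^-,(x_i^++x_i^-)z^{2r+1}]$, and expanding this using $[x_i^\pm,x_i^\pm]=0$ together with $\xi_i=[x_i^+,x_i^-]$ produces $2\xi_i z^{2r+1}$, which is exactly the image of $\sfh_{i,2r+1}$. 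One could alternatively derive the identity abstractly from \eqref{tcbbf} and the first identity via the Jacobi identity, but the Lie-theoretic verification is considerably cleaner.

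The generation claim then follows by iteration. By Lemma \ref{lem:full-gr}, the algebra $\mathrm{U}(\g[z]^{\check\omega})$ is generated by all $\sfb_{i,r}$ and $\sfh_{i,2r+1}$ with $i\in\I$ and $r\in\bN$. The first identity shows inductively that every $\sfb_{i,r+1}$ lies in the subalgebra generated by $\{\sfb_{i,0},\sfh_{i,1}\}_{i\in\I}$, and once this is established the second identity places every $\sfh_{i,2r+1}$ in the same subalgebra. Hence $\mathrm{U}(\g[z]^{\check\omega})$ is in fact generated by $\{\sfb_{i,0},\sfh_{i,1}\}_{i\in\I}$, and \emph{a fortiori} by the slightly enlarged set $\{\sfb_{i,0},\sfb_{i,1},\sfh_{i,1}\}_{i\in\I}$ appearing in the statement (the redundant $\sfb_{i,1}$ is kept to match the generating set used later in the minimalistic presentation of $\Yi$). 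There is no substantive obstacle in this argument; the entire lemma is a bookkeeping consequence of Lemma \ref{lem:full-gr}, with the only mild care required being the normalization factor $\hf(\alpha_i,\alpha_i)^{-1}$ coming from \eqref{tchbf}.
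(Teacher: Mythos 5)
Your proposal is correct, and for the first identity and the generation claim it coincides with the paper: specialize \eqref{tchbf} to $r=0$, $j=i$, and iterate to get all $\sfb_{i,r}$, then use the second identity to get all $\sfh_{i,2r+1}$. Where you genuinely diverge is the proof of the second identity. The paper stays entirely inside the abstract presentation of Lemma \ref{lem:full-gr}: from \eqref{tcbbf} with $j=i$, $s=r$ it first gets $[\sfb_{i,r+1},\sfb_{i,r}]=(-1)^{r+1}\sfh_{i,2r+1}$, and then by repeated use of \eqref{tcbbf} the shifted family $[\sfb_{i,r+s+1},\sfb_{i,r-s}]=(-1)^{r+s+1}\sfh_{i,2r+1}$ for $0\lle s\lle r$ (this is \eqref{bbhelper}), whose case $s=r$ is $\sfh_{i,2r+1}=[\sfb_{i,0},\sfb_{i,2r+1}]$. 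You instead verify the identity concretely in $\mathrm{U}(\g[z]^{\check\omega})$ through the identification \eqref{eq:hb-explicit}, computing $[x_i^+-x_i^-,(x_i^++x_i^-)z^{2r+1}]=2\xi_i z^{2r+1}$; this is legitimate, since Lemma \ref{lem:full-gr} asserts that the generators are identified with exactly these elements of the twisted current algebra, and it is the more elementary check. What the paper's abstract route buys is the byproduct \eqref{bbhelper} itself, which is used immediately after the lemma (together with \eqref{tchhf}--\eqref{tchbf}) to establish the extra relation \eqref{extra-verified} needed for the homomorphism $\psi$ in \eqref{eq:psi}; your argument does not produce that relation, though it too could be checked concretely in the same way. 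For the lemma as stated, both arguments are complete, and your normalization $\hf(\alpha_i,\alpha_i)^{-1}$ is the right one.
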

\begin{proof}
The first equality follows from \eqref{tchbf}. Let us consider the second one. Observe from \eqref{tcbbf} that
\[
[\sfb_{i,r+1},\sfb_{i,r}]-[\sfb_{i,r},\sfb_{i,r+1}]=-2(-1)^r\sfh_{i,2r+1}
\]
which implies further that 
\[
[\sfb_{i,r+1},\sfb_{i,r}]=(-1)^{r+1}\sfh_{i,2r+1}.
\]
Then again by \eqref{tcbbf} it is easy to see that
\beq\label{bbhelper}
[\sfb_{i,r+s+1},\sfb_{i,r-s}]=(-1)^{r+s+1}\sfh_{i,2r+1}
\eeq
for $0\lle s\lle r$. Taking $s=r$ in this identity gives the desired equation.
\end{proof}

By \eqref{tchhf}--\eqref{tchbf} and \eqref{bbhelper}, one easily finds that
\beq\label{extra-verified}
\Big[\sfh_{i,1},\big[\sfb_{i,1},[\sfh_{i,1},\sfb_{i,1}]\big]\Big]=0.
\eeq

Let $\mathcal U$ be the algebra generated by $\bb_{i,0}$, $\bb_{i,1}$, and $\bh_{i,1}$ subject to the relations \eqref{tchh}--\eqref{tcfSerre3} and \eqref{eq:add-rel-gr} when $\g$ is of type $\mathsf A_1$, $\mathsf B_2\cong \mathsf C_2$, or $\mathsf G_2$. Then it follows from Lemma \ref{lem:full-gr} and \eqref{extra-verified} that there exists an algebra homomorphism
\beq\label{eq:psi}
\begin{split}
\psi:~&\mathcal U\to \mathrm{U}(\g[z]^{\check\omega})\\
& \bh_{i,1}\mapsto \sfh_{i,1},\qquad \bb_{i,r}\mapsto \sfb_{i,r} \qquad (r=0,1),
\end{split}
\eeq
which is surjective by Lemma \ref{lem:gener}. 

Define
\beq\label{bhdef}
\bb_{i,r+1}=\hf(\alpha_i,\alpha_i)^{-1}[\bh_{i,1},\bb_{i,r}],\quad \bh_{i,r+1}=[\bb_{i,0},\bb_{i,r+1}]\qquad (r>0).
\eeq
The relation \eqref{tcbb} implies that 
\beq\label{bi0bi1=hi1}
[\bb_{i,0},\bb_{i,1}]=\bh_{i,1}.
\eeq
Hence \eqref{bhdef} also holds true if $r=0$.
To prove $\psi$ is an isomorphism, it suffices to prove that $\bh_{i,2r}=0$ and \eqref{tchhf}--\eqref{tcbbf} can be deduced from \eqref{tchh}--\eqref{tcfSerre3} and \eqref{eq:add-rel-gr} when $\g$ is of type $\mathsf A_1$, $\mathsf B_2\cong\mathsf C_2$, or $\mathsf G_2$. This should justify the existence of the inverse homomorphism of $\psi$. We establish this result by adapting the methods of \cite{Lev93gen,GNW18}, partitioning the proof into three main steps for clarity.

\subsubsection*{\bf Step 1} We first derive some easy relations that will be used later.

Using an easy induction on $r$ by applying $[\bh_{j,1},\cdot\,]$ to \eqref{tchb} recursively together with \eqref{tchh} and \eqref{bhdef}, we obtain
\beq\label{hi1bjr}[\bh_{i,1},\bb_{j,r}]=2(\alpha_i,\alpha_j)\bb_{j,r+1} 
\eeq
for $i,j\in \I$ and $r\in\bN$.

Applying $[\bh_{i,1},\cdot\,]$ to \eqref{bi0bi1=hi1}, we obtain from \eqref{bhdef} that 
\beq\label{bi0bi2=0}
\bh_{i,2}=[\bb_{i,0},\bb_{i,2}]=0.
\eeq
Similarly, applying $[\bh_{i,1},\cdot\,]$ to $[\bb_{i,0},\bb_{i,2}]=0$, we find that
\beq\label{bi1bi2=hi3}
[\bb_{i,0},\bb_{i,3}]+[\bb_{i,1},\bb_{i,2}]=0\stackrel{\eqref{bhdef}}{\Longrightarrow } [\bb_{i,1},\bb_{i,2}]=-\bh_{i,3}.
\eeq

\begin{lem}\label{lem:ext}
Assume the relations \eqref{tchh}--\eqref{tcfSerre3}. Then the extra relation \eqref{eq:add-rel-gr} is equivalent to $[\bh_{i,1},\bh_{i,3}]=0$.
\end{lem}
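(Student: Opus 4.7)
The plan is to show by a direct two-step unwinding that, modulo the already imposed relations \eqref{tchh}--\eqref{tcfSerre3} and the definitions \eqref{bhdef}, the left-hand side of \eqref{eq:add-rel-gr} is a nonzero scalar multiple of $[\bh_{i,1},\bh_{i,3}]$. Since $(\alpha_i,\alpha_i)\neq 0$, the equivalence then follows immediately. The key point is that both auxiliary identities I will need, namely \eqref{hi1bjr} and \eqref{bi1bi2=hi3}, have already been derived above from \eqref{tchh}--\eqref{tcfSerre3} together with \eqref{bhdef}, and crucially their derivations did not use \eqref{eq:add-rel-gr}; hence they are available as hypotheses on both sides of the desired equivalence.

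Concretely, I would carry out the computation from the inside out. By \eqref{hi1bjr} the innermost bracket is
$$[\bh_{i,1},\bb_{i,1}]=2(\alpha_i,\alpha_i)\,\bb_{i,2}.$$
Substituting this into the next layer and invoking \eqref{bi1bi2=hi3} gives
$$\big[\bb_{i,1},[\bh_{i,1},\bb_{i,1}]\big]=2(\alpha_i,\alpha_i)\,[\bb_{i,1},\bb_{i,2}]=-2(\alpha_i,\alpha_i)\,\bh_{i,3}.$$
Applying the outermost bracket with $\bh_{i,1}$ then yields
$$\Big[\bh_{i,1},\big[\bb_{i,1},[\bh_{i,1},\bb_{i,1}]\big]\Big]=-2(\alpha_i,\alpha_i)\,[\bh_{i,1},\bh_{i,3}].$$
Since $(\alpha_i,\alpha_i)\neq 0$, the vanishing of the LHS is equivalent to $[\bh_{i,1},\bh_{i,3}]=0$, which is exactly the claim.

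I do not anticipate any serious obstacle: the argument is essentially a two-line commutator manipulation, and the only subtlety worth flagging explicitly in the write-up is the logical point that \eqref{hi1bjr} and \eqref{bi1bi2=hi3} are genuinely available in the present context, so that the reduction is allowed on both sides of the ``equivalent'' statement.
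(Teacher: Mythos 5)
Your proposal is correct and is exactly the argument the paper has in mind: its proof simply says the claim is immediate from \eqref{hi1bjr} and \eqref{bi1bi2=hi3}, and your two-step unwinding (giving $\big[\bh_{i,1},[\bb_{i,1},[\bh_{i,1},\bb_{i,1}]]\big]=-2(\alpha_i,\alpha_i)[\bh_{i,1},\bh_{i,3}]$ with $(\alpha_i,\alpha_i)\neq 0$) is precisely that computation spelled out. Your remark that \eqref{hi1bjr} and \eqref{bi1bi2=hi3} were derived without invoking \eqref{eq:add-rel-gr} is a correct and appropriate observation, consistent with how the paper orders its derivations.
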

\begin{proof}
This is immediate from \eqref{hi1bjr} and \eqref{bi1bi2=hi3} that were established without the use of \eqref{eq:add-rel-gr}.
\end{proof}

\begin{lem}
If $i\ne j$, then the relation \eqref{tcbbf} holds:
\beq\label{bibi-shift}
[\bb_{i,r+1},\bb_{j,s}]-[\bb_{i,r},\bb_{j,s+1}]=0.
\eeq
\end{lem}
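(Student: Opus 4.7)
The plan is to reduce the identity $[\bb_{i,r+1},\bb_{j,s}]=[\bb_{i,r},\bb_{j,s+1}]$ (for $i\ne j$) to a telescoping induction on $r+s$, driven by the fact that $\ad(\bh_{i,1})$ and $\ad(\bh_{j,1})$ act on the pair of families $(\bb_{i,\bullet},\bb_{j,\bullet})$ via the $2\times 2$ Gram matrix of $\alpha_i,\alpha_j$, which is invertible. Set $A_{r,s}:=[\bb_{i,r+1},\bb_{j,s}]-[\bb_{i,r},\bb_{j,s+1}]$; the goal is to show $A_{r,s}=0$ for all $r,s\ge 0$. The base case $A_{0,0}=0$ is exactly \eqref{redbb} under the hypothesis $i\ne j$ (the $\delta_{ij}$ term drops, and the $\{\bb_{i,0},\bb_{j,0}\}$ term is absent as $(\alpha_i,\alpha_j)$ contributes nothing after the correct rewriting — more precisely, here we use the variant of \eqref{redbb} in the graded setting, namely \eqref{tcbb}, whose right-hand side vanishes for $i\ne j$).

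For the inductive step I would compute, for each $k\in\{i,j\}$, the commutator $[\bh_{k,1},A_{r,s}]$ using the Jacobi identity together with \eqref{hi1bjr}, which gives $[\bh_{k,1},\bb_{\ell,t}]=2(\alpha_k,\alpha_\ell)\bb_{\ell,t+1}$ for every $\ell\in\I$ and $t\ge 0$. A direct expansion yields
\begin{equation*}
[\bh_{k,1},A_{r,s}] \;=\; 2(\alpha_k,\alpha_i)\,A_{r+1,s} \;+\; 2(\alpha_k,\alpha_j)\,A_{r,s+1}.
\end{equation*}
Specializing to $k=i$ and $k=j$ produces the linear system
\begin{equation*}
\begin{pmatrix}(\alpha_i,\alpha_i)&(\alpha_i,\alpha_j)\\(\alpha_j,\alpha_i)&(\alpha_j,\alpha_j)\end{pmatrix}\!
\begin{pmatrix}A_{r+1,s}\\ A_{r,s+1}\end{pmatrix}
\;=\;\tfrac12\begin{pmatrix}[\bh_{i,1},A_{r,s}]\\ [\bh_{j,1},A_{r,s}]\end{pmatrix}.
\end{equation*}

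Since $\alpha_i,\alpha_j$ are distinct simple roots, they are linearly independent, so their Gram matrix is positive definite and in particular invertible. Consequently, whenever $A_{r,s}=0$ one automatically obtains $A_{r+1,s}=A_{r,s+1}=0$. Iterating from the base case $A_{0,0}=0$ and inducting on $r+s$ yields $A_{r,s}=0$ for all $r,s\ge 0$, which is precisely \eqref{bibi-shift}. I do not anticipate a serious obstacle here: the only substantive input is the already-established commutation \eqref{hi1bjr}, and the invertibility of the rank-$2$ Cartan Gram matrix is automatic. One small care point is that all of this takes place in the graded algebra $\mathcal U$, so one must verify that the definitions \eqref{bhdef} are unambiguous at each step (i.e.\ that $[\bh_{i,1},\bb_{i,r}]$ and $[\bh_{j,1},\bb_{j,r}]$ genuinely produce the elements $\bb_{i,r+1}$ and $\bb_{j,r+1}$ appearing in $A_{r+1,s}$ and $A_{r,s+1}$), but this is already guaranteed by \eqref{bhdef} and \eqref{hi1bjr}.
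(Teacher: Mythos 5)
Your proposal is correct and follows essentially the same route as the paper: the same quantity $\mathcal X(r,s)$, the same base case from \eqref{tcbb} with $i\ne j$, and the same inductive step of applying $[\bh_{i,1},\cdot\,]$ and $[\bh_{j,1},\cdot\,]$ via \eqref{hi1bjr} and invoking invertibility of the $2\times 2$ Gram matrix of the distinct simple roots $\alpha_i,\alpha_j$ to force $\mathcal X(r+1,s)=\mathcal X(r,s+1)=0$.
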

\begin{proof}
Let $\mathcal X(r,s):=[\bb_{i,r+1},\bb_{j,s}]-[\bb_{i,r},\bb_{j,s+1}]$. We prove by induction on $r+s$ that  $\mathcal X(r,s)=0$. 

The base case $r=s=0$ follows from \eqref{tcbb}. Suppose now $\mathcal X(r,s)=0$. Applying $[\bh_{i,1},\cdot\,]$ to $\mathcal X(r,s)=0$, we find that 
\beq\label{helper6}
2(\alpha_i,\alpha_i)\mathcal X(r+1,s)+2(\alpha_i,\alpha_j)\mathcal X(r,s+1)=0.
\eeq
Applying $[\bh_{j,1},\cdot\,]$ to $\mathcal X(r,s)=0$, we find that 
\beq\label{helper7}
2(\alpha_i,\alpha_j)\mathcal X(r+1,s)+2(\alpha_j,\alpha_j)\mathcal X(r,s+1)=0.
\eeq
Since the coefficient matrix of the linear system \eqref{helper6}--\eqref{helper7} is invertible, it has only trivial solution. Hence $\mathcal X(r+1,s)=\mathcal X(r,s+1)=0$.
\end{proof}

\begin{lem}\label{lem:helper99}
For $i,j\in \I$ and $r\in\bN$ such that $i\ne j$, we have 
\beq\label{hi3bjr}
[\bh_{i,3},\bb_{j,r}]=2(\alpha_i,\alpha_j)\bb_{j,r+3} .
\eeq
\end{lem}
\begin{proof}
Using \eqref{bhdef}, we have
\begin{align*}
-[\bh_{i,3},\bb_{j,r}]&\stackrel{\eqref{bi1bi2=hi3}}{=}\big[[\bb_{i,1},\bb_{i,2}],\bb_{j,r}\big]\\
&\,\,~=\,\,\,\big[[\bb_{i,1},\bb_{j,r}],\bb_{i,2}\big]+\big[\bb_{i,1},[\bb_{i,2},\bb_{j,r}]\big]\\
&\stackrel{\eqref{bibi-shift}}{=}\big[[\bb_{i,0},\bb_{j,r+1}],\bb_{i,2}\big]+\big[\bb_{i,1},[\bb_{i,0},\bb_{j,r+2}]\big]\\
&\stackrel{\eqref{bi0bi2=0}}{=}\big[\bb_{i,0},[\bb_{j,r+1},\bb_{i,2}]\big]+\big[\bb_{i,1},[\bb_{i,0},\bb_{j,r+2}]\big]\\
&\stackrel{\eqref{bibi-shift}}{=}\big[\bb_{i,0},[\bb_{j,r+2},\bb_{i,1}]\big]+\big[\bb_{i,1},[\bb_{i,0},\bb_{j,r+2}]\big]\\
&\,\,~=\,\,\,\big[[\bb_{i,1},\bb_{j,r+2}],\bb_{i,0}\big]+\big[\bb_{i,1},[\bb_{i,0},\bb_{j,r+2}]\big]\\
&\,\,~=\,\,\,\big[[\bb_{i,1},\bb_{i,0}],\bb_{j,r+2}\big]\\
&\,\stackrel{\eqref{bi0bi1=hi1}}{=}-[\bh_{i,1},\bb_{j,r+2}]\stackrel{\eqref{hi1bjr}}{=}-2(\alpha_i,\alpha_j)\bb_{j,r+3}.\qedhere
\end{align*}
\end{proof}

\subsubsection*{\bf Step 2} Next we discuss the relation $[\bh_{i,1},\bh_{i,3}]=0$ which is closely related to  the extra relation \eqref{eq:add-rel-gr}, see Lemma \ref{lem:ext}.

\begin{lem}
Let $i,j\in\I$ be such that $c_{ij}=-1$, then
\beq\label{bi2bi2bj0}
\big[\bb_{i,0},[\bb_{i,0},\bb_{j,s}]\big]=-d_i\bb_{j,s}\quad \text{and}\quad\big[\bb_{i,2},[\bb_{i,2},\bb_{j,0}]\big]=-d_i\bb_{j,4}.
\eeq
\end{lem}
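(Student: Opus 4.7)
The plan is to prove the two identities separately: the first by induction on $s$, with the inductive step driven by the derivation $\mathrm{ad}\,\bh_{i,1}$, and the second by reducing it, via the shift relation \eqref{bibi-shift}, to an instance of the first. Throughout I will use that $c_{ij}=-1$ forces $(\alpha_i,\alpha_j)=d_ic_{ij}=-d_i$, and in particular $d_i\ne 0$.

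For the first identity, set $A_s:=[\bb_{i,0},[\bb_{i,0},\bb_{j,s}]]+d_i\bb_{j,s}$, so that the base case $A_0=0$ is precisely the Serre relation \eqref{tcfSerre1}. The key step is to establish the recursion $[\bh_{i,1},A_s]=6d_iA_{s+1}$. To do this I will apply the derivation $\mathrm{ad}\,\bh_{i,1}$ to the nested commutator, using $[\bh_{i,1},\bb_{i,0}]=2(\alpha_i,\alpha_i)\bb_{i,1}=4d_i\bb_{i,1}$ together with $[\bh_{i,1},\bb_{j,s}]=-2d_i\bb_{j,s+1}$, both from \eqref{hi1bjr}. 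This produces three summands proportional to $[\bb_{i,1},[\bb_{i,0},\bb_{j,s}]]$, $[\bb_{i,0},[\bb_{i,1},\bb_{j,s}]]$, and $[\bb_{i,0},[\bb_{i,0},\bb_{j,s+1}]]$. I will then rewrite the first via the Jacobi identity, using $[\bb_{i,1},\bb_{i,0}]=-\bh_{i,1}$ from \eqref{bi0bi1=hi1} and $[\bb_{i,1},\bb_{j,s}]=[\bb_{i,0},\bb_{j,s+1}]$ from \eqref{bibi-shift}, which introduces an extra $2d_i\bb_{j,s+1}$ via \eqref{hi1bjr}; the second summand collapses to $[\bb_{i,0},[\bb_{i,0},\bb_{j,s+1}]]$ by \eqref{bibi-shift}. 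Collecting all contributions should yield exactly $6d_i\bigl([\bb_{i,0},[\bb_{i,0},\bb_{j,s+1}]]+d_i\bb_{j,s+1}\bigr)=6d_iA_{s+1}$, and the induction then concludes because $d_i\ne 0$.

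For the second identity, two iterations of \eqref{bibi-shift} give $[\bb_{i,2},\bb_{j,0}]=[\bb_{i,1},\bb_{j,1}]=[\bb_{i,0},\bb_{j,2}]$, hence by the Jacobi identity
\[
[\bb_{i,2},[\bb_{i,2},\bb_{j,0}]]=[\bb_{i,2},[\bb_{i,0},\bb_{j,2}]]=\big[[\bb_{i,2},\bb_{i,0}],\bb_{j,2}\big]+\big[\bb_{i,0},[\bb_{i,2},\bb_{j,2}]\big].
\]
The first summand vanishes because $[\bb_{i,0},\bb_{i,2}]=0$ by \eqref{bi0bi2=0}, while the second equals $[\bb_{i,0},[\bb_{i,0},\bb_{j,4}]]$ after two more applications of \eqref{bibi-shift}, which by the first identity is $-d_i\bb_{j,4}$. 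The main obstacle I anticipate is the careful bookkeeping in the derivation of $[\bh_{i,1},A_s]=6d_iA_{s+1}$; once that coefficient is pinned down, both halves of the lemma follow quickly from Jacobi and the shift relation.
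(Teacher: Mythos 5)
Your proof is correct. The second identity is handled exactly as in the paper: rewrite $[\bb_{i,2},[\bb_{i,2},\bb_{j,0}]]$ via \eqref{bibi-shift}, kill the term $[[\bb_{i,2},\bb_{i,0}],\bb_{j,2}]$ with \eqref{bi0bi2=0}, and shift twice more to land on the first identity at $s=4$. For the first identity your overall scheme (induction on $s$ with base case \eqref{tcfSerre1} and inductive step driven by $\mathrm{ad}\,\bh_{i,1}$) also matches the paper, but the mechanics of the step differ: the paper keeps the mixed symmetrized bracket $\mathcal X(1,0;s)=[\bb_{i,1},[\bb_{i,0},\bb_{j,s}]]+[\bb_{i,0},[\bb_{i,1},\bb_{j,s}]]$ as a second unknown, applies both $\mathrm{ad}\,\bh_{i,1}$ and $\mathrm{ad}\,\bh_{j,1}$, and solves the resulting $2\times 2$ linear system (whose coefficient matrix is invertible because $\alpha_i,\alpha_j$ are linearly independent), obtaining as a byproduct $\mathcal X(1,0;s)=0$; you instead close the recursion with a single derivation by eliminating the mixed term through Jacobi together with $[\bb_{i,1},\bb_{i,0}]=-\bh_{i,1}$ from \eqref{bi0bi1=hi1}, \eqref{hi1bjr}, and the shift \eqref{bibi-shift}. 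I checked your coefficient: with $(\alpha_i,\alpha_i)=2d_i$ and $(\alpha_i,\alpha_j)=-d_i$ one indeed gets $[\bh_{i,1},A_s]=8d_i^2\bb_{j,s+1}-2d_i^2\bb_{j,s+1}+(4+4-2)d_i[\bb_{i,0},[\bb_{i,0},\bb_{j,s+1}]]=6d_iA_{s+1}$, and since $d_i\neq 0$ the induction goes through. All ingredients you invoke are established before this lemma in the paper, so the argument is self-contained at this point; what you give up relative to the paper is only the auxiliary vanishing $\mathcal X(1,0;s)=0$, which is not used later, and what you gain is avoiding the two-derivation linear-system trick in favor of a one-line recursion.
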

\begin{proof}
Set
\[
\mathcal X(r_1,r_2;s)=\big[\bb_{i,r_1},[\bb_{i,r_2},\bb_{j,s}]\big]+\big[\bb_{i,r_2},[\bb_{i,r_1},\bb_{j,s}]\big].
\]Note that $\mathcal X(r_1,r_2;s)$ is symmetric with respect to $r_1,r_2$.

We show that $\mathcal X(0,0;s)=-2d_i\bb_{j,s}$ by induction on $s$, proving the first equality. Indeed, the induction base comes from \eqref{tcfSerre1}. For the induction step, applying $[\bh_{i,1},\cdot\,]$ and $[\bh_{j,1},\cdot\,]$ to $\mathcal X(0,0;s)=-2d_i\bb_{j,s}$ and then using \eqref{hi1bjr}, we have
\begin{align*}
4(\alpha_i,\alpha_i)\mathcal X(1,0;s)+2(\alpha_i,\alpha_j)\mathcal X(0,0;s+1)&=-4(\alpha_i,\alpha_j)d_i\bb_{j,s+1},\\
4(\alpha_i,\alpha_j)\mathcal X(1,0;s)+2(\alpha_j,\alpha_j)\mathcal X(0,0;s+1)&=-4(\alpha_j,\alpha_j)d_i\bb_{j,s+1}.
\end{align*}
This system has a unique solution given by $\mathcal X(1,0;s)=0$ and $\mathcal X(0,0;s+1)=-2d_i\bb_{j,s+1}$, completing the induction. Thus we have $\big[\bb_{i,0},[\bb_{i,0},\bb_{j,4}]\big]=-d_i\bb_{j,4}$ and hence
\begin{align*}
\big[\bb_{i,2},[\bb_{i,2},\bb_{j,0}]\big]&\stackrel{\eqref{bibi-shift}}{=}\big[\bb_{i,2},[\bb_{i,0},\bb_{j,2}]\big]\\&\stackrel{\eqref{bi0bi2=0}}{=}\big[\bb_{i,0},[\bb_{i,2},\bb_{j,2}]\big]\stackrel{\eqref{bibi-shift}}{=}\big[\bb_{i,0},[\bb_{i,0},\bb_{j,4}]\big]=-d_i\bb_{j,4}.\qedhere
\end{align*}
\end{proof}

\begin{lem}\label{lem:hi4}
If there exist $i,j\in\I$ such that $c_{ij}=c_{ji}=-1$, then we have
\beq\label{hi4}
[\bh_{i,1},\bh_{i,3}]=\bh_{i,4}=[\bb_{i,0},\bb_{i,4}]=[\bb_{i,1},\bb_{i,3}]=0.
\eeq
\end{lem}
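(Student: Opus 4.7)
The plan is first to reduce the four identities in \eqref{hi4} to the single assertion $[\bb_{i,1},\bb_{i,3}]=0$, and then to force this vanishing using the type $\mathsf A_2$ sub-diagram hypothesis.

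For the equivalences: $\bh_{i,4}=[\bb_{i,0},\bb_{i,4}]$ holds by definition \eqref{bhdef}, so the first two vanishings coincide. Applying $\mathrm{ad}(\bh_{i,1})$ to the identity $[\bb_{i,1},\bb_{i,2}]=-\bh_{i,3}$ of \eqref{bi1bi2=hi3}, and using \eqref{hi1bjr} together with $[\bb_{i,2},\bb_{i,2}]=0$, yields $[\bh_{i,1},\bh_{i,3}]=-2(\alpha_i,\alpha_i)[\bb_{i,1},\bb_{i,3}]$. A parallel Jacobi manipulation, starting from $\bb_{i,4}=\tfrac{1}{2(\alpha_i,\alpha_i)}[\bh_{i,1},\bb_{i,3}]$ in \eqref{bhdef} and using $[\bh_{i,1},\bb_{i,0}]=2(\alpha_i,\alpha_i)\bb_{i,1}$ together with $[\bb_{i,0},\bb_{i,3}]=\bh_{i,3}$, gives $[\bb_{i,0},\bb_{i,4}]=-[\bb_{i,1},\bb_{i,3}]+\tfrac{1}{2(\alpha_i,\alpha_i)}[\bh_{i,1},\bh_{i,3}]=-2[\bb_{i,1},\bb_{i,3}]$. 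So all four assertions in \eqref{hi4} are equivalent to $[\bb_{i,1},\bb_{i,3}]=0$.

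The hypothesis $c_{ij}=c_{ji}=-1$ forces $d_i=d_j$ and $(\alpha_i,\alpha_j)=-d_i$. Combining \eqref{hi1bjr} with \eqref{hi3bjr} via Jacobi, the two pieces of $[[\bh_{i,1},\bh_{i,3}],\bb_{j,r}]$ contribute $(2(\alpha_i,\alpha_j))^2\bb_{j,r+4}$ with opposite signs, so this commutator vanishes for every $r\geq 0$. Hence $\mathcal Z:=[\bh_{i,1},\bh_{i,3}]$ centralizes every $\bb_{j,r}$, and also every $\bh_{j,2r+1}=[\bb_{j,0},\bb_{j,2r+1}]$.

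The main obstacle is to upgrade this centralizing property to the outright vanishing $\mathcal Z=0$. The strategy is to apply $\mathrm{ad}(\bh_{i,1})^n$ and $\mathrm{ad}(\bh_{j,1})^m$ to the Serre relation $[\bb_{i,0},[\bb_{i,0},\bb_{j,0}]]=-d_i\bb_{j,0}$ of \eqref{tcfSerre1} and, via the multinomial Leibniz rule, to extract relations among the auxiliary quantities $\mathcal X(r_1,r_2;s):=[\bb_{i,r_1},[\bb_{i,r_2},\bb_{j,s}]]+[\bb_{i,r_2},[\bb_{i,r_1},\bb_{j,s}]]$. Combined with the shift \eqref{bibi-shift}, the already-derived identities $\mathcal X(0,0;s)=-2d_i\bb_{j,s}$ and $\mathcal X(1,0;s)=0$ from the proof of \eqref{bi2bi2bj0}, and Jacobi applied to $\bh_{i,3}=-[\bb_{i,1},\bb_{i,2}]$, this is expected to produce an equation expressing $[\bb_{i,1},\bb_{i,3}]$ as a linear combination of quantities already known to vanish. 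The delicate point is that naive $\mathrm{ad}(\bh_{i,1})^n$-expansions of the Serre relation turn out to be automatic consequences of the $\mathcal X$-identities and give no new information; one must therefore combine the actions of both $\bh_{i,1}$ and $\bh_{j,1}$, or invoke the partner Serre relation $\mathrm{ad}(\bb_{j,0})^2(\bb_{i,0})=-d_i\bb_{i,0}$ valid by $c_{ji}=-1$, to extract a genuinely non-trivial identity that pins down $[\bb_{i,1},\bb_{i,3}]=0$.
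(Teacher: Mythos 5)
Your opening reduction is sound: the identities $[\bh_{i,1},\bh_{i,3}]=-2(\alpha_i,\alpha_i)[\bb_{i,1},\bb_{i,3}]$ and $[\bb_{i,0},\bb_{i,4}]=-2[\bb_{i,1},\bb_{i,3}]$ are exactly the content of \eqref{helper8}--\eqref{helper9}, so all four assertions in \eqref{hi4} are indeed equivalent to $[\bb_{i,1},\bb_{i,3}]=0$. The gap is that this last vanishing is never actually proved. Your centralizer observation (that $\mathcal Z:=[\bh_{i,1},\bh_{i,3}]$ kills every $\bb_{j,r}$, via \eqref{hi1bjr} and \eqref{hi3bjr}) is correct but inert: it says nothing about $[\mathcal Z,\bb_{i,r}]$, and a centralizing property cannot by itself force $\mathcal Z=0$. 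The final paragraph of your argument is a plan rather than a proof: you state that Leibniz expansions of \eqref{tcfSerre1} under $\mathrm{ad}(\bh_{i,1})^n$, $\mathrm{ad}(\bh_{j,1})^m$ are ``expected to'' yield an identity pinning down $[\bb_{i,1},\bb_{i,3}]$, and you yourself record that the naive expansions reduce to the known $\mathcal X$-identities and give no new information. No concrete relation forcing $[\bb_{i,1},\bb_{i,3}]=0$ is exhibited, so the proof is incomplete precisely at the point where the hypothesis $c_{ji}=-1$ must do real work.

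For comparison, the paper closes this gap with one short computation built on the \emph{second} identity of \eqref{bi2bi2bj0}, $\big[\bb_{i,2},[\bb_{i,2},\bb_{j,0}]\big]=-d_i\bb_{j,4}$, which your sketch does not use. Bracketing it with $\bb_{j,0}$ and applying the Jacobi identity, the cross term is a self-commutator and drops out, so the left-hand side becomes $\big[\bb_{i,2},\big[[\bb_{i,2},\bb_{j,0}],\bb_{j,0}\big]\big]$; since $c_{ji}=-1$ gives $\big[[\bb_{i,2},\bb_{j,0}],\bb_{j,0}\big]=\big[\bb_{j,0},[\bb_{j,0},\bb_{i,2}]\big]=-d_j\bb_{i,2}$ (the $j$-on-$i$ analogue of \eqref{bi2bi2bj0}), this side vanishes, whence $[\bb_{j,4},\bb_{j,0}]=0$, and exchanging the roles of $i$ and $j$ gives $[\bb_{i,4},\bb_{i,0}]=0$. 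Feeding this into \eqref{helper8}--\eqref{helper9} (equivalently, into your two linear relations) forces $[\bb_{i,1},\bb_{i,3}]=0$ and hence \eqref{hi4}. Your instinct to invoke the partner Serre data for $c_{ji}=-1$ points in the right direction (note it should read $\mathrm{ad}(\bb_{j,0})^2(\bb_{i,0})=-d_j\bb_{i,0}$, harmless here since $d_i=d_j$), but as written the argument stops short of an actual derivation.
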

\begin{proof}
First, observe that
\begin{align}
[\bh_{i,1},\bh_{i,3}]&\stackrel{\eqref{bi1bi2=hi3}}{=}\big[\bh_{i,1},[\bb_{i,2},\bb_{i,1}]\big]\stackrel{\eqref{hi1bjr}}{=}2(\alpha_i,\alpha_i)[\bb_{i,3},\bb_{i,1}],\label{helper8}\\
[\bh_{i,1},\bh_{i,3}]&\stackrel{\eqref{bhdef}}{=}\big[\bh_{i,1},[\bb_{i,0},\bb_{i,3}]\big]\stackrel{\eqref{hi1bjr}}{=}2(\alpha_i,\alpha_i)\big([\bb_{i,1},\bb_{i,3}]+[\bb_{i,0},\bb_{i,4}]\big).\label{helper9}
\end{align}
Now applying $[\cdot,\bb_{j,0}]$ to the second equality in  \eqref{bi2bi2bj0}, we find that
\beq\label{helper10}
\Big[\bb_{i,2},\big[[\bb_{i,2},\bb_{j,0}],\bb_{j,0}\big]\Big]=-d_i[\bb_{j,4},\bb_{j,0}],
\eeq
which vanishes as $\big[[\bb_{i,2},\bb_{j,0}],\bb_{j,0}\big]=-d_j\bb_{i,2}$ by \eqref{bi2bi2bj0}. Similarly, we have $[\bb_{i,4},\bb_{i,0}]=0$. Now the claim follows by comparing the RHS of \eqref{helper8}--\eqref{helper9}.
\end{proof}
\begin{rem}
Note that we used $c_{ij}=-1$ to get \eqref{bi2bi2bj0} and $c_{ji}=-1$ to show \eqref{helper10} vanishes.
\end{rem}

\begin{lem}\label{itoj}
Suppose $[\bh_{i,1},\bh_{i,3}]=0$ and $(\alpha_i,\alpha_j)\ne 0$, then 
\beq\label{hj4}
[\bh_{j,1},\bh_{j,3}]=\bh_{j,4}=[\bb_{j,0},\bb_{j,4}]=[\bb_{j,1},\bb_{j,3}]=0.
\eeq
\end{lem}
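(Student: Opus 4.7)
The plan is to reduce all four claimed vanishings in the lemma to the single statement $\bh_{j,4}=0$, and then prove this by computing the cross-commutator $[\bh_{i,3},\bh_{j,1}]$ in two different ways and equating them.

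\textbf{Step 1 (universal identity).} I would first establish that, for every $j\in\I$ and \emph{without} any hypothesis, the identities
\[
\bh_{j,4}=-2[\bb_{j,1},\bb_{j,3}],\qquad [\bh_{j,1},\bh_{j,3}]=(\alpha_j,\alpha_j)\,\bh_{j,4}
\]
hold in $\mathcal U$. This is obtained by writing $\bh_{j,3}$ in the two equivalent forms $[\bb_{j,0},\bb_{j,3}]$ and $-[\bb_{j,1},\bb_{j,2}]$ (from \eqref{bhdef} and \eqref{bi1bi2=hi3}), expanding $[\bh_{j,1},\bh_{j,3}]$ by the derivation property using \eqref{hi1bjr}, and equating the two outputs; cancellation of the nonzero scalar $(\alpha_j,\alpha_j)$ yields the displayed identities (this is essentially \eqref{helper8}--\eqref{helper9} applied in both directions). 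Applied at $i$, this identity turns the hypothesis $[\bh_{i,1},\bh_{i,3}]=0$ into $\bh_{i,4}=0$ and $[\bb_{i,1},\bb_{i,3}]=0$; applied at $j$, it reduces the conclusion of the lemma to the single claim $\bh_{j,4}=0$.

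\textbf{Step 2 (two evaluations of $[\bh_{i,3},\bh_{j,1}]$).} The case $i=j$ is already handled by Step~1, so assume $i\ne j$. On the one hand, using $\bh_{i,3}=-[\bb_{i,1},\bb_{i,2}]$ together with \eqref{hi1bjr} and the derivation property, a short Jacobi calculation yields
\[
[\bh_{i,3},\bh_{j,1}]=2(\alpha_i,\alpha_j)\,[\bb_{i,1},\bb_{i,3}]=0,
\]
the last equality being Step~1 applied at $i$. On the other hand, using $\bh_{j,1}=[\bb_{j,0},\bb_{j,1}]$ together with the already-proven \eqref{hi3bjr} (which requires precisely $i\ne j$), one expands
\[
[\bh_{i,3},\bh_{j,1}]=2(\alpha_i,\alpha_j)\bigl([\bb_{j,3},\bb_{j,1}]+[\bb_{j,0},\bb_{j,4}]\bigr),
\]
and substitutes $[\bb_{j,0},\bb_{j,4}]=\bh_{j,4}$ (by \eqref{bhdef}) and $[\bb_{j,3},\bb_{j,1}]=\tfrac12\bh_{j,4}$ (from Step~1) to obtain $3(\alpha_i,\alpha_j)\,\bh_{j,4}$.

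\textbf{Step 3 (conclusion).} Equating the two evaluations gives $3(\alpha_i,\alpha_j)\,\bh_{j,4}=0$, and the hypothesis $(\alpha_i,\alpha_j)\ne 0$ forces $\bh_{j,4}=0$; the remaining three vanishings are then immediate from the universal identities of Step~1. The main conceptual point is noticing that $[\bh_{i,3},\bh_{j,1}]$ admits one expansion in terms of the $\bb_i$'s that vanishes by hypothesis and a second expansion in terms of the $\bb_j$'s that isolates $\bh_{j,4}$ with a nonzero coefficient; everything else is routine use of the Jacobi identity together with \eqref{hi1bjr}, \eqref{hi3bjr}, and \eqref{bhdef}.
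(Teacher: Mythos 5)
Your proposal is correct and follows essentially the same route as the paper: both arguments transfer the hypothesis at node $i$ to node $j$ through the mixed commutator of $\bh_{i,3}$ with $\bh_{j,1}$, using \eqref{hi1bjr}, \eqref{hi3bjr}, \eqref{bhdef} and the comparison of \eqref{helper8} with \eqref{helper9} (your ``universal identity''). The only difference is organizational — the paper first derives $[\bh_{j,1},\bh_{i,3}]=0$ by a swap via \eqref{hi1bjr} and then brackets twice with $\bb_{j,0}$, whereas you evaluate $[\bh_{i,3},\bh_{j,1}]$ in two ways to isolate $3(\alpha_i,\alpha_j)\bh_{j,4}$ — and all your coefficients and sign conventions check out.
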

\begin{proof}
We have
\begin{align*}
0&\,\,\,=\,\,[\bh_{i,1},\bh_{i,3}]\stackrel{\eqref{bi1bi2=hi3}}{=}\big[\bh_{i,1},[\bb_{i,2},\bb_{i,1}]\big]=\big[[\bh_{i,1},\bb_{i,2}],\bb_{i,1}\big]+\big[\bb_{i,2},[\bh_{i,1},\bb_{i,1}]\big]\\
&\stackrel{\eqref{hi1bjr}}{=}\frac{(\alpha_i,\alpha_i)}{(\alpha_i,\alpha_j)}\big[[\bh_{j,1},\bb_{i,2}],\bb_{i,1}\big]=\frac{(\alpha_i,\alpha_i)}{(\alpha_i,\alpha_j)}\big[\bh_{j,1},[\bb_{i,2},\bb_{i,1}]\big]=\frac{(\alpha_i,\alpha_i)}{(\alpha_i,\alpha_j)}[\bh_{j,1},\bh_{i,3}].
\end{align*}
Thus $[\bh_{j,1},\bh_{i,3}]=0$. Applying $[\,\cdot\,,\bb_{j,0}]$ to it and then using \eqref{hi1bjr} and \eqref{hi3bjr}, we obtain
\[
(\alpha_j,\alpha_j)[\bb_{j,1},\bh_{i,3}]+(\alpha_i,\alpha_j)[\bh_{j,1},\bb_{j,3}]=0.
\]
Applying $[\bb_{j,0},\,\cdot\,]$ to the above equation, a similar calculation using $[\bh_{j,1},\bh_{i,3}]=0$ shows that
\begin{align*}
-4(\alpha_i,\alpha_j)(\alpha_j,\alpha_j)[\bb_{j,1},\bb_{j,3}]+(\alpha_i,\alpha_j)[\bh_{j,1},\bh_{j,3}]=0.
\end{align*}
Note that \eqref{helper8} implies that $[\bh_{j,1},\bh_{j,3}]=2(\alpha_j,\alpha_j)[\bb_{j,3},\bb_{j,1}]$. Combining this with the above equation implies $[\bh_{j,1},\bh_{j,3}]=0$. The remaining equalities then follow as in Lemma \ref{lem:hi4}.
\end{proof}

If $\g$ is not of type $\mathsf A_1$, $\mathsf B_2\cong\mathsf C_2$, or $\mathsf G_2$, then we can always find $i,j\in \I$ such that $c_{ij}=c_{ji}=-1$. Hence there must exist $i\in \I$ such that \eqref{hi4} holds. If $\g$ is of type $\mathsf A_1$, $\mathsf B_2\cong\mathsf C_2$, or $\mathsf G_2$, then we manually include the extra relation \eqref{eq:add-rel-gr} for a single $i\in \I$ which implies \eqref{hi4} by Lemma \ref{lem:ext}. Hence we always have the relation \eqref{hi4} for at least a single $i\in \I$ in the algebra $\mathcal U$. Then Lemma \ref{itoj} implies further that \eqref{hi4} holds for all $i\in \I$.

\begin{lem}
If $[\bh_{i,1},\bh_{i,3}]=0$, then
\beq
[\bh_{i,3},\bb_{i,r}]=2(\alpha_i,\alpha_i)\bb_{i,r+3}.
\eeq
\end{lem}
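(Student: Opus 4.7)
My plan is to prove the identity by induction on $r$, with the base case $r=0$ handled by a direct Jacobi computation and the induction step made essentially free by the hypothesis $[\bh_{i,1},\bh_{i,3}]=0$.

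For the base case, I would use the alternative expression $\bh_{i,3}=-[\bb_{i,1},\bb_{i,2}]$ coming from \eqref{bi1bi2=hi3}. Then Jacobi gives
\[
[\bh_{i,3},\bb_{i,0}]=-\big[[\bb_{i,1},\bb_{i,2}],\bb_{i,0}\big]=\big[[\bb_{i,0},\bb_{i,1}],\bb_{i,2}\big]+\big[\bb_{i,1},[\bb_{i,0},\bb_{i,2}]\big].
\]
The second bracket vanishes by $\bh_{i,2}=[\bb_{i,0},\bb_{i,2}]=0$ from \eqref{bi0bi2=0}, and the first simplifies via $[\bb_{i,0},\bb_{i,1}]=\bh_{i,1}$ from \eqref{bi0bi1=hi1} and then \eqref{hi1bjr} (with $j=i$) to $2(\alpha_i,\alpha_i)\bb_{i,3}$. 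Thus $[\bh_{i,3},\bb_{i,0}]=2(\alpha_i,\alpha_i)\bb_{i,3}$.

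For the induction step, suppose $[\bh_{i,3},\bb_{i,r}]=2(\alpha_i,\alpha_i)\bb_{i,r+3}$. Applying $[\bh_{i,1},\,\cdot\,]$ and using the Jacobi identity together with the hypothesis $[\bh_{i,1},\bh_{i,3}]=0$ yields
\[
\big[\bh_{i,1},[\bh_{i,3},\bb_{i,r}]\big]=\big[\bh_{i,3},[\bh_{i,1},\bb_{i,r}]\big]=2(\alpha_i,\alpha_i)[\bh_{i,3},\bb_{i,r+1}].
\]
On the other hand the inductive formula and \eqref{hi1bjr} give
\[
\big[\bh_{i,1},2(\alpha_i,\alpha_i)\bb_{i,r+3}\big]=4(\alpha_i,\alpha_i)^2\bb_{i,r+4}.
\]
Comparing produces $[\bh_{i,3},\bb_{i,r+1}]=2(\alpha_i,\alpha_i)\bb_{i,r+4}$, completing the induction.

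The only substantive point is the base case; everything else is a bookkeeping application of the commutation $[\bh_{i,1},\bh_{i,3}]=0$. In particular no new Serre-type identity or case analysis is required, so I do not anticipate a genuine obstacle here.
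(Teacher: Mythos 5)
Your proposal is correct and follows essentially the same route as the paper: the base case $r=0$ is done by the same Jacobi computation using $\bh_{i,3}=[\bb_{i,2},\bb_{i,1}]$ from \eqref{bi1bi2=hi3}, $[\bb_{i,0},\bb_{i,2}]=0$ from \eqref{bi0bi2=0}, and $[\bb_{i,0},\bb_{i,1}]=\bh_{i,1}$, while the induction step is exactly the paper's remark that the general case follows by recursively applying $[\bh_{i,1},\cdot\,]$ thanks to the hypothesis $[\bh_{i,1},\bh_{i,3}]=0$; you merely write out this recursion explicitly.
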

\begin{proof}
Since $[\bh_{i,1},\bh_{i,3}]=0$, it suffices to prove it for the case $r=0$ and the general case follows by recursively applying $[\bh_{i,1},\cdot\,]$. Indeed, we have
\begin{align*}
[\bh_{i,3},\bb_{i,0}]&\stackrel{\eqref{bi1bi2=hi3}}{=}\big[[\bb_{i,2},\bb_{i,1}],\bb_{i,0}\big]\\
&\stackrel{\eqref{bi0bi2=0}}{=}\big[\bb_{i,2},[\bb_{i,1},\bb_{i,0}]\big]\stackrel{\eqref{hi1bjr}}{=}[\bh_{i,1},\bb_{i,2}]\stackrel{\eqref{bhdef}}{=}2(\alpha_i,\alpha_i)\bb_{i,3}.\qedhere
\end{align*}
\end{proof}

\subsubsection*{\bf Step 3} Finally, with the previous lemmas, we are ready to finish the proof of Proposition \ref{prop:red}. 

We first prove by induction on $\ell$ that
\begin{align}
&[\bh_{i,r},\bh_{i,s}]=0,\quad \bh_{i,2p}=0,\label{todo1}\\
&[\bh_{i,r},\bb_{i,s}]=2(\alpha_i,\alpha_i)\bb_{i,r+s},\label{todo2}\\
&[\bb_{i,r},\bb_{i,s}]=(-1)^r\bh_{i,r+s},\label{todo3}
\end{align}
if $r+s\lle\ell$ and $2p\lle \ell$. This verifies  \eqref{tchhf}--\eqref{tcbbf} for the case $i=j$.

We have already established these equalities for $\ell\lle 4$, see \eqref{bhdef}--\eqref{bi1bi2=hi3} and Lemmas \ref{lem:helper99}, \ref{lem:hi4}, \ref{itoj}. We shall proceed with a case-by-case study depending on the remainder of $\ell$ divided by $4$.

If $\ell=4l+1$ for $l\gge 1$, then there are no new relations from \eqref{todo1}. By induction hypothesis we have 
\beq\label{helper11}
\bh_{i,4l}=[\bb_{i,0},\bb_{i,4l}]=\dots=[\bb_{i,2l-1},\bb_{i,2l+1}]=0.
\eeq
Applying $[\bh_{i,1},\cdot\,]$ to the above equation, we have
\begin{align*}
0&=[\bb_{i,0},\bb_{i,4l+1}]+[\bb_{i,1},\bb_{i,4l}]\\&=[\bb_{i,1},\bb_{i,4l}]+[\bb_{i,2},\bb_{i,4l-1}]=\cdots=[\bb_{i,2l-1},\bb_{i,2l+2}]+[\bb_{i,2l},\bb_{i,2l+1}].
\end{align*}
Together with \eqref{bhdef}, it implies \eqref{todo3} for $r+s=4l+1$. 

Observe that if $[\bh_{i,2r+1},\bh_{i,1}]=0$, then $[\bh_{i,2r+1},\bb_{i,0}]=2(\alpha_i,\alpha_i)\bb_{i,2r+1}$ implies $[\bh_{i,2r+1},\bb_{i,s}]=2(\alpha_i,\alpha_i)\bb_{i,2r+s}$ for $s\in\bN$ by recursively applying $[\bh_{i,1},\cdot\,]$. Thus to show \eqref{todo2}, it suffices to prove $[\bh_{i,4l+1},\bb_{i,0}]=2(\alpha_i,\alpha_i)\bb_{i,4l+1}$. This can be proved as follows:
\beq\label{helper12}
\begin{split}
[\bh_{i,4l+1},\bb_{i,0}]&\stackrel{\eqref{todo3}}{=}\big[\bb_{i,0},[\bb_{i,1},\bb_{i,4l}]\big]\\
&\stackrel{\eqref{helper11}}{=}\big[[\bb_{i,0},\bb_{i,1}],\bb_{i,4l}\big]\stackrel{\eqref{bhdef}}{=}[\bh_{i,1},\bb_{i,4l}]\stackrel{\eqref{hi1bjr}}{=}2(\alpha_i,\alpha_i)\bb_{i,4l+1}.
\end{split}
\eeq

If $\ell=4l+2$, then $[\bh_{i,2l+1},\bh_{i,1}]=0$ by induction hypothesis and hence we have $[\bh_{i,2l+1},\bb_{i,s}]=2(\alpha_i,\alpha_i)\bb_{i,2l+s+1}$ for all $s\in\bN$. Thus
\beq\label{bi0bi4l+2}
0=[\bh_{i,2l+1},\bh_{i,2l+1}]\stackrel{\eqref{bhdef}}{=}\big[\bh_{i,2l+1},[\bb_{i,0},\bb_{i,2l+1}]\big]=2(\alpha_i,\alpha_i)[\bb_{i,0},\bb_{i,4l+2}].
\eeq
Note that for $0\lle r\lle 2l$ we have
\begin{align*}
[\bh_{i,1},\bh_{i,4l+1}]&=(-1)^r\big[\bh_{i,1},[\bb_{i,r},\bb_{i,4l+1-r}]\big]\\
&=2(\alpha_i,\alpha_i)(-1)^r\big([\bb_{i,r},\bb_{i,4l+2-r}]+[\bb_{i,r+1},\bb_{i,4l+1-r}]\big).
\end{align*}
Thus
\begin{align*}
\tfrac{1}{2(\alpha_i,\alpha_i)}[\bh_{i,1},\bh_{i,4l+1}]&=[\bb_{i,0},\bb_{i,4l+2}]+[\bb_{i,1},\bb_{i,4l+1}]=-[\bb_{i,1},\bb_{i,4l+1}]-[\bb_{i,2},\bb_{i,4l}]\\&=\cdots=-[\bb_{i,2l-1},\bb_{i,2l+3}]-[\bb_{i,2l},\bb_{i,2l+2}]=[\bb_{i,2l},\bb_{i,2l+2}].
\end{align*}
Set $\mathfrak X:=\tfrac{1}{2(\alpha_i,\alpha_i)}[\bh_{i,1},\bh_{i,4l+1}]$, then it is easy to see from the equation above that 
$$
[\bb_{i,2l+1-r},\bb_{i,2l+1+r}]=(-1)^{r+1}r\mathfrak X.
$$ 
Now setting $r=2l+1$, we obtain from \eqref{bi0bi4l+2} that $\mathfrak X=0$. In particular, we obtain \eqref{todo3} for $r+s=4l+2$, and 
\[
\bh_{i,4l+2}\stackrel{\eqref{bhdef}}{=}0, \qquad [\bh_{i,1},\bh_{i,4l+1}]=0.
\]
The relation \eqref{todo2} for $r+s=4l+2$ is clear by induction hypothesis, \eqref{helper12} and  $[\bh_{i,1},\bh_{i,r}]=0$ for all $1\lle r\lle 4l+1$. Actually, we also have $[\bh_{i,4l+1},\bb_{i,s}]=2(\alpha_i,\alpha_i)\bb_{i,4l+s+1}$ for all $s\in\bN$. Finally, by rewriting $\bh_{i,r}$ as $(-1)^{r-1}[\bb_{i,r-1},\bb_{i,1}]$, the relation \eqref{todo1} for $r+s=4l+2$ follows from \eqref{todo3} for $r+s=4l+2$.

If $\ell=4l+3$, then it is very similar to the case $\ell=4l+1$. Hence we omit the details.

If $\ell=4l+4$, set $\mathfrak X:=\tfrac{1}{2(\alpha_i,\alpha_i)}[\bh_{i,1},\bh_{i,4l+3}]$. As in the case $\ell=4l+2$, one finds that 
\beq\label{helper13}
[\bb_{i,2l+2-r},\bb_{i,2l+2+r}]=(-1)^{r}r\mathfrak X.
\eeq
On one hand,
\begin{align*}
[\bh_{i,2l+1},\bh_{i,2l+3}]&\stackrel{\eqref{todo3}}{=}\big[\bh_{i,2l+1},[\bb_{i,2l},\bb_{i,3}]\big]\\
&\stackrel{\eqref{todo2}}{=}2(\alpha_i,\alpha_i)\big([\bb_{i,4l+1},\bb_{i,3}]+[\bb_{i,2l},\bb_{i,2l+4}]\big)\\
&\stackrel{\eqref{helper13}}{=}2(\alpha_i,\alpha_i)\big((2l-1)\mathfrak X+2\mathfrak X\big)=2(\alpha_i,\alpha_i)(2l+1)\mathfrak X.
\end{align*}
On the other hand,
\begin{align*}
[\bh_{i,2l+1},\bh_{i,2l+3}]&\stackrel{\eqref{todo3}}{=}\big[\bh_{i,2l+3},[\bb_{i,1},\bb_{i,2l}]\big]\\
&\stackrel{\eqref{todo2}}{=}2(\alpha_i,\alpha_i)\big([\bb_{i,2l+4},\bb_{i,2l}]+[\bb_{i,1},\bb_{i,4l+3}]\big)\\
&\stackrel{\eqref{helper13}}{=}-2(\alpha_i,\alpha_i)\big(2\mathfrak X+(2l+1)\mathfrak X\big)=-2(\alpha_i,\alpha_i)(2l+3)\mathfrak X.
\end{align*}
Thus we conclude that $\mathfrak X=0$. In particular, $\bh_{i,4l+4}=[\bb_{i,0},\bb_{i,4l+4}]=0$. The rest is very similar to the case $\ell=4l+2$. Therefore, we have proved \eqref{todo1}--\eqref{todo3}.

Due to \eqref{todo1}--\eqref{todo3} and \eqref{bibi-shift}, to prove Proposition \ref{prop:red}, it remains to prove \eqref{tchhf} and \eqref{tchbf} for $i\ne j$. 

If $i\ne j$, then we have
\begin{align*}
[\bh_{i,2r+1},\bb_{j,s}]&\stackrel{\eqref{todo3}}{=}\big[[\bb_{i,2r},\bb_{i,1}],\bb_{j,s}\big]=\big[[\bb_{i,2r},\bb_{j,s}],\bb_{i,1}\big]+\big[\bb_{i,2r},[\bb_{i,1},\bb_{j,s}]\big]\\
&\stackrel{\eqref{bibi-shift}}{=}\big[[\bb_{i,0},\bb_{j,2r+s}],\bb_{i,1}\big]+\big[\bb_{i,2r},[\bb_{i,0},\bb_{j,s+1}]\big]\\
&\overset{\eqref{todo1}}{\underset{\eqref{todo3}}{=}}\big[[\bb_{i,0},\bb_{j,2r+s}],\bb_{i,1}\big]+\big[\bb_{i,0},[\bb_{i,2r},\bb_{j,s+1}]\big]\\
&\stackrel{\eqref{bibi-shift}}{=}\big[[\bb_{i,0},\bb_{j,2r+s}],\bb_{i,1}\big]+\big[\bb_{i,0},[\bb_{i,1},\bb_{j,2r+s}]\big]\\
&\,\,\,=\,\,\,\big[[\bb_{i,0},\bb_{i,1}],\bb_{j,2r+s}\big]=[\bh_{i,1},\bb_{j,2r+s}]=2(\alpha_i,\alpha_j)\bb_{j,2r+s+1},
\end{align*}
proving \eqref{tchbf}. Then applying $[\,\cdot\,,\bb_{j,0}]$ to $[\bh_{i,2r+1},\bb_{j,2s+1}]=2(\alpha_i,\alpha_j)\bb_{j,2r+2s+2}$, we obtain
\begin{align*}
2(\alpha_i,\alpha_j)[\bb_{j,2r+1},\bb_{j,2s+1}]-[\bh_{i,2r+1},\bh_{j,2s+1}]=2(\alpha_i,\alpha_j)[\bb_{j,2r+2s+2},\bb_{j,0}].
\end{align*}
Note that by \eqref{todo1} and \eqref{todo3}, we have $[\bb_{j,0},\bb_{j,2r+2s+2}]=[\bb_{j,2r+1},\bb_{j,2s+1}]=0$. Together with the equation above, we conclude that $[\bh_{i,2r+1},\bh_{j,2s+1}]=0$, completing the proof of \eqref{tchhf}.

Combining all the observations together, the proof of Proposition \ref{prop:red} is complete.

\subsection{Proof of Theorem \ref{thm:min-text}}\label{sec:pfthmA}
Let $\Yi^{\min}$ be the algebra generated by $h_{i,1},b_{i,0},b_{i,1}$ for $i\in \I$ subject to the relations \eqref{redhh}--\eqref{redbb}, the finite Serre type relations \eqref{fSerre0}--\eqref{fSerre3}, and the extra relation \eqref{eq:add-rel} when $\g$ is of type $\mathsf A_1$, $\mathsf B_2\cong \mathsf C_2$, or $\mathsf G_2$. Then we have an algebra homomorphism
\begin{align*}
\pi:&~ \Yi^{\min}\to \Yi\\
&~h_{i,1}\mapsto h_{i,1},\quad b_{i,r}\mapsto b_{i,r} \qquad (i\in \I,r=0,1).
\end{align*}
Note that when the extra relation \eqref{eq:add-rel} is included, then it is also satisfied in $\Yi$ by Lemma \ref{lem:hbhb}.
Moreover, $\pi$ is surjective as $\Yi$ is generated by $h_{i,1},b_{i,0}$ for $i\in \I$ by Lemma \ref{lem:generate}. 

Introduce a filtration on $\Yi^{\min}$  by setting $\deg h_{i,1}=1$ and $\deg b_{i,r}=r$. Recall the filtration on $\Yi$ from Section \ref{sec:ty}. Then $\pi$ is clearly a filtered homomorphism. Taking the associated graded, we obtain a surjective homomorphism $\gr\,\pi:\gr\,\Yi^{\min}\to\gr\,\Yi$.

On the other hand, recall the algebra isomorphism $\varrho: \mathrm{U}(\g[z]^{\check\omega})\stackrel{\cong\,}{\longrightarrow} \mathrm{gr}\,\Yi$ from \eqref{assi}. By the presentation of $\mathrm{U}(\g[z]^{\check\omega})$ from Proposition \ref{prop:red}, we also have a natural homomorphism $\mathrm{U}(\g[z]^{\check\omega})\to \gr\,\Yi^{\min}$, which matches on generators (in different math fonts). By composing  $\varrho^{-1}$ with this map, we obtain a homomorphism $\gr \,\Yi\to \gr\,\Yi^{\min}$; by \eqref{assi}, \eqref{eq:hb-explicit} and \eqref{eq:psi}, this homomorphism is clearly the inverse to $\gr\,\pi$ as seen on generators. Hence, we have proved that $\gr\,\pi:\gr\,\Yi^{\min}\to\gr\,\Yi$ is an isomorphism, and so is $\pi:\Yi^{\min}\to \Yi$.

\section{Coideal structures}\label{sec:coideal}
\subsection{Embedding into Yangians}
Our next main result identifies the twisted Yangian $\Yi$ in the Drinfeld presentation with the twisted Yangian $\YiJ$ in the $J$ presentation.
\begin{thm}\label{thm:embedmaintext}
Let $\g$ be a simple Lie algebra which is not of type $\mathsf G_2$. We have the following.
\begin{enumerate}
    \item The map $\varphi:\Yi\to \Y$ defined by the rule 
\begin{align}
    h_{i,1}&\mapsto 2\xi_{i,1}-\xi_{i,0}^2+\sum_{\alpha\in\Phi^+}(\alpha,\alpha_i)(x_\alpha^+)^2,\label{hi1-embedding}\\
    b_{i,0}&\mapsto x_{i}^+-x_{i}^-,\label{bi0-embedding}\\
    b_{i,1}&\mapsto x_{i,1}^++x_{i,1}^-+\tfrac12\sum_{\alpha\in\Phi^+}\big\{[x_{i}^+,x_{\alpha}^+],x_{\alpha}^+\big\}-\tfrac12\{x_i^+,\xi_i\},\label{bi1-embedding}
\end{align}
induces an injective algebra homomorphism from $\Yi$ to $\Y$.
\item Identifying $\Yi$ as a subalgebra of $\Y$ via $\varphi$,  we have
\beq\label{coprohi1}
\Delta(h_{i,1})=h_{i,1}\otimes 1+1\otimes h_{i,1}+2\sum_{\alpha\in\Phi^+}(\alpha,\alpha_i)(x_{\alpha}^+-x_{\alpha}^-)\otimes x_\alpha^+.
\eeq
In particular, $\Yi$ is a right coideal subalgebra of $\Y$, i.e. $\Delta(\Yi)\subset \Yi\otimes \Y$.
\item The twisted Yangian $\Yi$ in the Drinfeld presentation is isomorphic to the twisted Yangian $\YiJ$ in the $J$ presentation. 
\end{enumerate}
\end{thm}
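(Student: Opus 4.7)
The plan is to leverage Theorem~\ref{thm:min} to reduce part~(1) to verifying only finitely many relations on the images of $h_{i,1}, b_{i,0}, b_{i,1}$ under $\varphi$: namely relations \eqref{redhh}--\eqref{redbb}, the Serre-type relations \eqref{fSerre0}--\eqref{fSerre3}, and, in types $\mathsf A_1$ and $\mathsf B_2\cong\mathsf C_2$, the additional relation \eqref{eq:add-rel}. The latter small-rank cases are to be handled separately through the R-matrix presentation in Appendices~\ref{sec:app} and \ref{sec:app2}; the remaining relations are verified by a uniform calculation that works for all the other simple types (type $\mathsf G_2$ is excluded from the statement).

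The first step is to rewrite the three images in $J$-presentation form. Using \eqref{eq:J}, direct manipulation of \eqref{hi1-embedding}--\eqref{bi1-embedding} should express $\varphi(b_{i,1})$ as $B(x_i^+ + x_i^-)$ up to an element of $U(\mathfrak{k})$, and $\varphi(h_{i,1})$ as $2B(\xi_i)$ up to a correction of the form $\tfrac12\sum_{\alpha\in\Phi^+}(\alpha,\alpha_i)(x_\alpha^+-x_\alpha^-)^2 \in U(\mathfrak{k})$; of course $\varphi(b_{i,0}) = x_i^+-x_i^- \in \mathfrak{k}$ already. In particular all three images lie in $\YiJ \subset \Y$, and this observation will also supply the inclusion needed for part~(3).

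With this rewriting in hand, the Serre-type relations \eqref{fSerre0}--\eqref{fSerre3} (which only involve $\varphi(b_{i,0}) \in \mathfrak{k}$) reduce immediately to the classical Serre relations in $U(\mathfrak{k})$. Relations \eqref{redhh}--\eqref{redbb} are more substantial and are to be checked by direct computation of the relevant commutators in $\Y$ using the $J$-bracket rules \eqref{eq:another} together with Lemma~\ref{lem:omega}. A convenient organizing principle is the filtration $\deg\xi_{i,r}=\deg x_{i,r}^\pm=r$ on $\Y$: since the associated graded of $\varphi$ is by \eqref{ass} and \eqref{assi} the canonical embedding $U(\g[z]^{\check\omega}) \hookrightarrow U(\g[z])$, each relation in the minimalistic presentation automatically holds at top filtration degree, and the verification reduces to checking that the lower-order correction terms cancel correctly. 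I expect this bookkeeping, particularly for \eqref{redbb} in the diagonal case $i=j$, to be the principal technical obstacle. Injectivity of $\varphi$ is then automatic from the same filtration argument: $\varphi$ is filtered, and its associated graded is the injective inclusion $U(\g[z]^{\check\omega}) \hookrightarrow U(\g[z])$.

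For part~(2), a direct computation of $\Delta(h_{i,1})$ from \eqref{hi1-embedding} using \eqref{copro-efh} and \eqref{coproxii1} shows that the cross-terms reorganize into exactly $h_{i,1}\otimes 1 + 1\otimes h_{i,1} + 2\sum_{\alpha\in\Phi^+}(\alpha,\alpha_i)(x_\alpha^+-x_\alpha^-)\otimes x_\alpha^+$, and since each $x_\alpha^+-x_\alpha^- \in \mathfrak{k} \subset \Yi$ this lies in $\Yi\otimes\Y$; combined with $\Delta(b_{i,0}) = b_{i,0}\otimes 1 + 1\otimes b_{i,0}$ and the fact that $\Yi$ is generated by $\{h_{i,1},b_{i,0}\}_{i\in\I}$ (Lemma~\ref{lem:generate}), the right-coideal property $\Delta(\Yi) \subset \Yi\otimes\Y$ follows. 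Part~(3) combines the inclusion $\varphi(\Yi) \subseteq \YiJ$ established in the second paragraph with an associated-graded comparison: both subalgebras are filtered by the restricted filtration on $\Y$, and both associated graded algebras identify canonically with $U(\g[z]^{\check\omega})$ inside $\gr\Y \cong U(\g[z])$ (by \eqref{assi} for $\Yi$ and by the standard homogeneous quantization argument recalled in \S\ref{sec:goal} for $\YiJ$). The inclusion of associated graded algebras is the identity on this common copy of $U(\g[z]^{\check\omega})$, so the original inclusion $\varphi(\Yi) \subseteq \YiJ$ is forced to be an equality.
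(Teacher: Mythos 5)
Your skeleton matches the paper's proof: reduce part (1) to the finitely many relations of Theorem \ref{thm:min-text}, defer types $\mathsf A_1$ and $\mathsf B_2\cong\mathsf C_2$ to the R-matrix appendices, deduce injectivity from the associated graded map being the embedding $\mathrm{U}(\g[z]^{\check\omega})\hookrightarrow\mathrm{U}(\g[z])$, get part (2) from \eqref{copro-efh}, \eqref{coproxii1} and Lemma \ref{lem:generate}, and prove part (3) by combining $\varphi(\Yi)\subseteq\YiJ$ with a comparison of associated graded subalgebras of $\gr\Y\cong\mathrm{U}(\g[z])$. For the inclusion, the only thing actually needed (and what the paper computes) is $h_{i,1}=2B(\xi_i)+\tfrac12\sum_{\alpha\in\Phi^+}(\alpha,\alpha_i)b_\alpha^2\in\YiJ$, consistent with your claimed correction term and with \eqref{hi1-J}; your additional claim that $\varphi(b_{i,1})\equiv B(x_i^++x_i^-)$ modulo $U(\mathfrak k)$ is unverified but also unnecessary, since $\Yi$ is generated by $h_{i,1},b_{i,0}$.

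The genuine gap is in the one hard step of part (1): you never verify that $\varphi$ preserves \eqref{redhh}--\eqref{redbb}, and the organizing principle you offer is not a reduction. Knowing that each relation holds in top filtration degree is exactly the classical statement already encoded in Proposition \ref{prop:red}; the relations must hold exactly, and their entire content is the cancellation of the lower-order terms, so nothing is saved by this observation. You also misplace the difficulty: the delicate relation is \eqref{redhh}, $[\varphi(h_{i,1}),\varphi(h_{j,1})]=0$, not the diagonal case of \eqref{redbb}. The paper does not attack it through the raw quartic relations \eqref{eq:another} (which would be unwieldy); it writes $\varphi(h_{i,1})=2J(\xi_i)-2\tilde v_i+\sum_{\alpha\in\Phi^+}(\alpha,\alpha_i)(x_\alpha^+)^2$, invokes $[J(\xi_i)-\tilde v_i,J(\xi_j)-\tilde v_j]=0$ from \cite{GRW19} and the identity $[J(\xi_i),x_\beta^+]=(\beta,\alpha_i)J(x_\beta^+)$ from \cite{GNW18}, and then carries out a term-by-term root-vector cancellation resting on the normalization \eqref{eta}; the relations \eqref{redhb} and \eqref{redbb} are checked directly in the Drinfeld presentation using \eqref{tlxicom}, \eqref{eq:relXX}, \eqref{eq:relexXX}. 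Without supplying such a verification (or an equivalent one), part (1) is not established, and parts (2)--(3) fall with it. A minor further inaccuracy: \eqref{fSerre0}--\eqref{fSerre3} are not the classical Serre relations of $U(\mathfrak k)$ but the well-known Onsager-type relations satisfied by $x_i^+-x_i^-$ inside $U(\g)$; the paper likewise treats them as standard, so this does not affect the argument.
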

The theorem will be proved in Sections \ref{sec:pfthmB0}--\ref{sec:pfthmB} with the help of Theorem \ref{thm:min-text}. For the case of type $\mathsf A_1$, it will be treated differently in Appendix \ref{sec:app} via the isomorphism between twisted Yangians in their R-matrix and Drinfeld presentations, established in \cite{LWZ25GD}. Similarly in Appendix \ref{sec:app2}, the case of type $\mathsf B_2\cong \mathsf C_2$ requires extra help from twisted Yangians in their R-matrix presentation \cite{GR16} to deal with the extra relation \eqref{eq:add-rel}.

Note that the formulas in \eqref{hi1-embedding} and \eqref{bi1-embedding} are surprisingly close to the identification formulas between Yangians in their $J$  and Drinfeld presentations from \eqref{eq:J}. In terms of the $J$ presentation, the elements $h_{i,1}$ corresponds to
\beq\label{hi1-J}
\begin{split}
h_{i,1}\mapsto &2J(\xi_i)-\tfrac12[\xi_i,C_{\mathfrak k}]+\tfrac{1}2\sum_{\alpha\in\Phi^+}(\alpha,\alpha_i)(x_{\alpha}^+-x_{\alpha}^-)^2.
\end{split}
\eeq
In terms of $B(\xi_i)$ from Definition \ref{def:tY-J},  
this can also be written as 
\[
h_{i,1}\mapsto 2B(\xi_i)+\tfrac{1}2\sum_{\alpha\in\Phi^+}(\alpha,\alpha_i)(b_{\alpha})^2.
\]

The coproduct for $b_{i,1}$ can be explicitly computed using the relation
\[
\Delta(b_{i,1})=\frac{1}{2(\alpha_i,\alpha_i)}\big[\Delta(h_{i,1}),b_{i,0}\otimes 1+1\otimes b_{i,0}\big]
\]
together with similar calculations to those performed in \S\ref{sec:pfthmB0}. The end result is
\beq\label{helper98}
\begin{split}
\Delta(b_{i,1})=\,&b_{i,1}\otimes 1+1\otimes b_{i,1}-b_{i,0}\otimes \xi_i\\&-\sum_{\alpha\in\Phi^+}\Big(\big([x_\alpha^+,x_i^+]-[x_i^-,x_\alpha^-]\big)\otimes x_\alpha^++(x_\alpha^+-x_\alpha^-)\otimes [x_\alpha^+,x_i^+]\Big).
\end{split}
\eeq

\subsection{$\varphi$ induces an algebra homomorphism}\label{sec:pfthmB0}
In this subsection, we prove that $\varphi$ from Part (1) of Theorem \ref{thm:embedmaintext} induces an algebra homomorphism with the help of Theorem \ref{thm:min-text}. We shall verify below that $\varphi(h_{i,1}),\varphi(b_{i,0}),\varphi(b_{i,1})$ satisfy the relations \eqref{redhh}--\eqref{redbb} (the relations \eqref{fSerre0}--\eqref{fSerre3} are straightforward and well known) and hence $\varphi$ induces an algebra homomorphism if $\g$ is not of type $\mathsf A_1$, $\mathsf B_2\cong \mathsf C_2$, or $\mathsf G_2$, which we denote again by $\varphi$. 

For the case of type $\mathsf A_1$, it is already known in \cite{LWZ25GD} that $\Yi$ is a subalgebra of $\Y$ as $\Yi$ is isomorphic to the (special) twisted Yangian of type $\mathsf{AI}$ in the R-matrix presentation \cite{Ols92,MNO96} which is known to be a coideal subalgebra of the Yangian for $\fksl_2$. Hence we only need to carefully identify the elements $h_{i,1}$ and $b_{i,1}$ in terms of Drinfeld generators in $\Y$. This is accomplished in Appendix \ref{sec:app}. For the case of type $\mathsf B_2\cong \mathsf C_2$, we need to verify the extra relation \eqref{eq:add-rel}, this is done in Appendix \ref{sec:app2} via the Gauss decomposition of twisted Yangians in their R-matrix presentation from \cite{GR16}. 

In the next subsection, we show the algebra homomorphism $\varphi$ is injective by passing to the associated graded algebras. Finally, we prove Parts (2) and (3) with the help of Part (1).

\subsubsection{The relation \eqref{redhb}} 
We need to verify
\beq\label{HBrel}
[\varphi(h_{i,1}),\varphi(b_{j,0})]=2(\alpha_i,\alpha_j)\varphi(b_{j,1}).
\eeq
Applying \eqref{tlxidef}, \eqref{hi1-embedding} and \eqref{bi0-embedding}, the LHS of \eqref{HBrel} is given by
\begin{align*}
&\Big[2\tl\xi_{i,1}+\sum_{\alpha\in\Phi^+}(\alpha,\alpha_i)(x_{\alpha}^+)^2,x_j^+-x_j^-\Big ]\\
\stackrel{\eqref{tlxicom}}{=}\,\, &2(\alpha_i,\alpha_j)(x_{j,1}^++x_{j,1}^-)+\sum_{\alpha\in\Phi^+}(\alpha,\alpha_i)\Big(\big\{x_{\alpha}^+,[x_\alpha^+,x_j^+]\big\}-\big\{x_{\alpha}^+,[x_\alpha^+,x_j^-]\big\}\Big)\\
\stackrel{(*)}{=}\,\,~\,&2(\alpha_i,\alpha_j)(x_{j,1}^++x_{j,1}^-)+\sum_{\alpha\in\Phi^+\setminus\{\alpha_j\}}(\alpha,\alpha_i) \big\{x_{\alpha}^+,[x_\alpha^+,x_j^+]\big\}\\
& -(\alpha_i,\alpha_j)\{\xi_{j},x_j^+\}-\sum_{\alpha\in\Phi^+\setminus\{\alpha_j\}}(\alpha+\alpha_j,\alpha_i) \big\{x_{\alpha+\alpha_j}^+,[x_{\alpha+\alpha_j}^+,x_j^-]\big\}\\
\stackrel{\eqref{+++=+-+}}{=}\,\,& 2(\alpha_i,\alpha_j)(x_{j,1}^++x_{j,1}^-)-(\alpha_i,\alpha_j)\{\xi_{j},x_j^+\}-\sum_{\alpha\in\Phi^+\setminus\{\alpha_j\}}(\alpha_j,\alpha_i) \big\{x_{\alpha}^+,[x_\alpha^+,x_j^+]\big\}\\
\stackrel{\eqref{bi1-embedding}}{=}\, \,& 2(\alpha_i,\alpha_j)\varphi(b_{j,1}),
\end{align*}
where we have used \eqref{eq:relXX} and applied substitution $\alpha\mapsto\alpha+\alpha_j$ in $(*)$ and the equalities
\beq\label{+++=+-+}
x_{\alpha}^+[x_\alpha^+,x_j^+]\stackrel{\eqref{eta}}{=}[x_{\alpha+\alpha_j}^+,x_j^-]x_{\alpha+\alpha_j}^+,\qquad [x_\alpha^+,x_j^+]x_{\alpha}^+\stackrel{\eqref{eta}}{=}x_{\alpha+\alpha_j}^+[x_{\alpha+\alpha_j}^+,x_j^-].
\eeq
Note that throughout this section, we assume that $x_\alpha^\pm=0$ if $\alpha\notin \Phi^+$.

\subsubsection{The relation \eqref{redbb}}
We need to verify
\begin{align}
&\big[\varphi(b_{i,0}),\varphi(b_{i,1})\big]= \varphi(h_{i,1})-\tfrac12(\alpha_i,\alpha_i)\big(\varphi(b_{i,0})\big)^2,\label{bbi=j}\\
&\big[\varphi(b_{i,1}),\varphi(b_{j,0})\big]-\big[\varphi(b_{i,0}),\varphi(b_{j,1})\big]= \tfrac12(\alpha_i,\alpha_j)\big\{\varphi(b_{i,0}),\varphi(b_{j,0})\big\},	\quad (i\ne j).\label{bbinej}
\end{align}
We start with the following lemma.
\begin{lem}
For $i\ne j$, we have
\begin{align}
&\Big[x_i^+-x_i^-,\sum_{\alpha\in\Phi^+}\big\{[x_i^+,x_\alpha^+],x_{\alpha}^+\big\}\Big]=2\sum_{\alpha\in\Phi^+\setminus\{\alpha_i\}}(\alpha,\alpha_i)(x_\alpha^+)^2,\label{helper1}\\
&\Big[x_i^+-x_i^-,\sum_{\alpha\in\Phi^+}\big\{[x_j^+,x_\alpha^+],x_{\alpha}^+\big\}\Big]+\Big[x_j^+-x_j^-,\sum_{\alpha\in\Phi^+}\big\{[x_i^+,x_\alpha^+],x_{\alpha}^+\big\}\Big]\label{helper2}\\
&=\big\{[x_i^+,\xi_j],x_j^+\big\}+\big\{[x_j^+,\xi_i],x_i^+\big\}+\big\{[x_i^+,x_j^+],\xi_j\big\}+\big\{[x_j^+,x_i^+],\xi_i\big\}.\notag
\end{align}
\end{lem}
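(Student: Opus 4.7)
The plan is to establish both identities by direct computation in $U(\g)$, exploiting the derivation rule
\[
[x, \{A, B\}] = \{[x, A], B\} + \{A, [x, B]\}
\]
together with the structure constant symmetries \eqref{eta} and the $\mathfrak{sl}_2$-eigenvalue identity $[\xi_i, x_\alpha^+] = (\alpha, \alpha_i) x_\alpha^+$. I would adopt throughout the convention that $\eta_{\alpha,\beta}=0$ whenever $\alpha+\beta\notin\Phi^+$.

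For \eqref{helper1}, the $\alpha=\alpha_i$ summand of $S_i := \sum_{\alpha\in\Phi^+}\{[x_i^+, x_\alpha^+], x_\alpha^+\}$ is trivially zero, so I rewrite $S_i = \sum_{\alpha\ne\alpha_i}\eta_{\alpha_i,\alpha}\{x_{\alpha+\alpha_i}^+, x_\alpha^+\}$. Expanding $[x_i^+, S_i]$ via the derivation rule yields a shifted piece $\sum_\alpha \eta_{\alpha_i,\alpha}\eta_{\alpha_i,\alpha+\alpha_i}\{x_{\alpha+2\alpha_i}^+, x_\alpha^+\}$ plus a squared piece $2\sum_\alpha \eta_{\alpha_i,\alpha}^2(x_{\alpha+\alpha_i}^+)^2$. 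Expanding $[x_i^-, S_i]$ analogously, using the identity $\eta_{-\alpha_i,\beta}=\eta_{\alpha_i,\beta-\alpha_i}$ extracted from \eqref{eta}, produces a parallel expression. After the index substitution $\alpha\mapsto\alpha-\alpha_i$ in one of the shifted pieces, the cross terms $\{x_{\alpha+\alpha_i}^+, x_{\alpha-\alpha_i}^+\}$ appear with matching coefficients in $[x_i^+, S_i]$ and $[x_i^-, S_i]$ and cancel exactly in the difference $[x_i^+-x_i^-, S_i]$, leaving
\[
[x_i^+ - x_i^-, S_i] = 2\sum_{\alpha\in\Phi^+}\bigl(\eta_{\alpha_i,\alpha-\alpha_i}^2 - \eta_{\alpha_i,\alpha}^2\bigr)(x_\alpha^+)^2.
\]
The scalar identity $\eta_{\alpha_i,\alpha-\alpha_i}^2 - \eta_{\alpha_i,\alpha}^2 = (\alpha,\alpha_i)$ valid for $\alpha\ne\alpha_i$ then follows by applying Jacobi to the expansion $[\xi_i, x_\alpha^+] = [x_i^+, [x_i^-, x_\alpha^+]] - [x_i^-, [x_i^+, x_\alpha^+]]$ and invoking \eqref{eta}; the boundary case $\alpha=\alpha_i$ contributes zero on both sides since $\eta_{\alpha_i,0}=\eta_{\alpha_i,\alpha_i}=0$. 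This reproduces the RHS of \eqref{helper1}.

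For \eqref{helper2}, the approach is analogous but requires carrying two simple indices simultaneously. Writing $T_j := \sum_\alpha\{[x_j^+, x_\alpha^+], x_\alpha^+\}$, one expands each of $[x_i^\pm, T_j]$ and $[x_j^\pm, T_i]$ by the derivation rule. The Jacobi identity supplies the key pairing
\[
\eta_{\alpha_j,\alpha}\eta_{\alpha_i,\alpha+\alpha_j} - \eta_{\alpha_i,\alpha}\eta_{\alpha_j,\alpha+\alpha_i} = \eta_{\alpha_i,\alpha_j}\,\eta_{\alpha_i+\alpha_j,\alpha},
\]
along with its counterparts under \eqref{eta}. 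After parallel index shifts, the bulk terms $\{x_{\alpha+\alpha_i+\alpha_j}^+, x_\alpha^+\}$ and $\{x_{\alpha+\alpha_i}^+, x_{\alpha+\alpha_j}^+\}$ cancel among the four commutators modulo these Jacobi residues. What survives are the boundary contributions in which $[x_i^-, x_{\alpha_i}^+]=\xi_i$ arises inside $[x_i^-, T_j]$ (and symmetrically with $i\leftrightarrow j$), together with the Jacobi residues proportional to $[x_i^+, x_j^+] = \eta_{\alpha_i,\alpha_j}x_{\alpha_i+\alpha_j}^+$. Reorganizing these remnants via \eqref{eta} assembles exactly the four anti-commutators on the RHS of \eqref{helper2}. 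The main obstacle is the careful bookkeeping of shifted structure constants and verifying that the $\xi_i$/$\xi_j$-boundary terms and Jacobi residues fit together into the precise symmetric pattern required; no conceptual input beyond \eqref{eta} and the Jacobi identity is needed, but the computation for \eqref{helper2} is unavoidably lengthy, and one may wish to streamline it by organizing $\g$ under the rank-two subalgebra $\mathfrak{sl}_2(i)+\mathfrak{sl}_2(j)$ before carrying out the index-chasing.
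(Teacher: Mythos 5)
Your treatment of \eqref{helper1} is correct and is essentially the paper's computation: the paper extracts $\sum_{\alpha}\{[\xi_i,x_\alpha^+],x_\alpha^+\}$ by an operator-level Jacobi step and cancels the remaining four families after the shift $\alpha\mapsto\alpha+\alpha_i$ using \eqref{eta}, while you do the same cancellation at the level of structure constants and finish with the scalar identity $\eta_{\alpha_i,\alpha-\alpha_i}^2-\eta_{\alpha_i,\alpha}^2=(\alpha,\alpha_i)$, itself a consequence of the same Jacobi step; the two are interchangeable.

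For \eqref{helper2}, which you have not actually carried out, the plan has the right shape (boundary indices produce the right-hand side, bulk terms cancel after index shifts), but the mechanism as you describe it needs correcting. First, $[x_i^-,x_{\alpha_i}^+]=-\xi_i$, not $\xi_i$. Second, no ``Jacobi residues'' survive the bulk cancellation: since $i\ne j$ one has $[x_i^-,x_j^+]=0$, hence $[x_i^-,[x_j^+,x_\alpha^+]]=[x_j^+,[x_i^-,x_\alpha^+]]$, and after the substitutions $\alpha\mapsto\alpha+\alpha_i$, $\alpha\mapsto\alpha+\alpha_j$ every bulk term of $[x_i^+-x_i^-,T_j]+[x_j^+-x_j^-,T_i]$ cancels against a partner carrying the \emph{identical} product of structure constants (only \eqref{eta} is used); your displayed identity $\eta_{\alpha_j,\alpha}\eta_{\alpha_i,\alpha+\alpha_j}-\eta_{\alpha_i,\alpha}\eta_{\alpha_j,\alpha+\alpha_i}=\eta_{\alpha_i,\alpha_j}\eta_{\alpha_i+\alpha_j,\alpha}$ is true but not needed, and if you do insert it the resulting residues cancel between the $[x^\pm,\cdot\,]$ halves rather than feeding the right-hand side. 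The entire right-hand side comes from the boundary indices alone: the $\alpha=\alpha_i$ terms of $-[x_i^-,T_j]$ give $\{[x_j^+,\xi_i],x_i^+\}+\{[x_j^+,x_i^+],\xi_i\}$ (in a fully expanded version the first appears as $\eta_{\alpha_i,\alpha_j}^2\{x_i^+,x_j^+\}$, an anticommutator rather than a multiple of $[x_i^+,x_j^+]$, converted via $\eta_{\alpha_i,\alpha_j}^2=-(\alpha_i,\alpha_j)$), and symmetrically the $\alpha=\alpha_j$ terms of $-[x_j^-,T_i]$ give the other two terms. With these adjustments your computation closes and coincides with the proof in the paper.
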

\begin{proof}
The LHS of \eqref{helper1} is equal to
\begin{align*}
&\stackrel{\eqref{eq:relXX}}{=}\sum_{\alpha\in\Phi^+\setminus\{\alpha_i\}}\Big(\big\{\big[x_i^+,[x_i^+,x_\alpha^+]\big],x_\alpha^+\big\}+\big\{[x_i^+,x_\alpha^+],[x_i^+,x_\alpha^+]\big\}\\
&\hskip 2cm+\big\{ [\xi_i,x_\alpha^+] ,x_\alpha^+\big\}-\big\{\big[x_i^+,[x_i^-,x_\alpha^+]\big],x_\alpha^+\big\}-\big\{[x_i^+,x_\alpha^+],[x_i^-,x_\alpha^+]\big\}\Big)\\
&~=\sum_{\alpha\in\Phi^+\setminus\{\alpha_i\}}\Big(\big\{\big[x_i^+,[x_i^+,x_\alpha^+]\big],x_\alpha^+\big\}-\big\{[x_i^+,x_{\alpha+\alpha_i}^+],[x_i^-,x_{\alpha+\alpha_i}^+]\big\}\\
&\hskip 1cm+\big\{ [\xi_i,x_\alpha^+] ,x_\alpha^+\big\}+\big\{[x_i^+,x_\alpha^+],[x_i^+,x_\alpha^+]\big\}-\big\{\big[x_i^+,[x_i^-,x_{\alpha+\alpha_i}^+]\big],x_{\alpha+\alpha_i}^+\big\}\Big)\\
&\stackrel{\eqref{eta}}{=}\sum_{\alpha\in\Phi^+\setminus\{\alpha_i\}} \big\{ [\xi_i,x_\alpha^+] ,x_\alpha^+\big\}\stackrel{\eqref{eq:relHX}}{=} 2\sum_{\alpha\in\Phi^+\setminus\{\alpha_i\}}(\alpha,\alpha_i)(x_\alpha^+)^2,
\end{align*}
completing the proof of \eqref{helper1}.

The LHS of \eqref{helper2} is equal to
\begin{align*}
&\sum_{\alpha\in\Phi^+\setminus\{\alpha_j\}}\bigg(\Big(\big\{\big[x_i^+,[x_j^+,x_\alpha^+]\big],x_\alpha^+\big\}+\big\{[x_j^+,x_\alpha^+],[x_i^+,x_\alpha^+]\big\}\Big)\\
&\qquad\qquad\qquad-\Big(\big\{\big[x_j^+,[x_i^-,x_\alpha^+]\big],x_\alpha^+\big\}+\big\{[x_j^+,x_\alpha^+],[x_i^-,x_\alpha^+]\big\}\Big)\bigg)\\
+&\sum_{\alpha\in\Phi^+\setminus\{\alpha_i\}}\bigg(\Big(\big\{\big[x_j^+,[x_i^+,x_\alpha^+]\big],x_\alpha^+\big\}+\big\{[x_i^+,x_\alpha^+],[x_j^+,x_\alpha^+]\big\}\Big)\\
&\qquad\qquad\qquad-\Big(\big\{\big[x_i^+,[x_j^-,x_\alpha^+]\big],x_\alpha^+\big\}+\big\{[x_i^+,x_\alpha^+],[x_j^-,x_\alpha^+]\big\}\Big)\bigg)\\
\stackrel{(*)}{=}&\sum_{\alpha\in\Phi^+\setminus\{\alpha_i,\alpha_j\}}\bigg(\Big(\big\{\big[x_i^+,[x_j^+,x_\alpha^+]\big],x_\alpha^+\big\}-\big\{[x_i^+,x_{\alpha+\alpha_j}^+],[x_j^-,x_{\alpha+\alpha_j}^+]\big\}\Big)\\
&\qquad\qquad\qquad-\Big(\big\{\big[x_j^+,[x_i^-,x_{\alpha+\alpha_i}^+]\big],x_{\alpha+\alpha_i}^+\big\}-\big\{[x_i^+,x_\alpha^+],[x_j^+,x_\alpha^+]\big\}\Big)\bigg)\\
+&\sum_{\alpha\in\Phi^+\setminus\{\alpha_i,\alpha_j\}}\bigg(\Big(\big\{\big[x_j^+,[x_i^+,x_\alpha^+]\big],x_\alpha^+\big\}-\big\{[x_j^+,x_{\alpha+\alpha_i}^+],[x_i^-,x_{\alpha+\alpha_i}^+]\big\}\Big)\\
&\qquad\qquad\qquad-\Big(\big\{\big[x_i^+,[x_j^-,x_{\alpha+\alpha_j}^+]\big],x_{\alpha+\alpha_j}^+\big\}-\big\{[x_j^+,x_\alpha^+],[x_i^+,x_\alpha^+]\big\}\Big)\bigg)\\
+&\,\big\{[x_i^+,\xi_j],x_j^+\big\}+\big\{[x_j^+,\xi_i],x_i^+\big\}+\big\{[x_i^+,x_j^+],\xi_j\big\}+\big\{[x_j^+,x_i^+],\xi_i\big\}\\
\stackrel{\eqref{eta}}{=}&\,\{[x_i^+,\xi_j],x_j^+\big\}+\big\{[x_j^+,\xi_i],x_i^+\big\}+\big\{[x_i^+,x_j^+],\xi_j\big\}+\big\{[x_j^+,x_i^+],\xi_i\big\}.
\end{align*}
In $(*)$ we applied \eqref{eq:relXX} and suitable substitutions for $\alpha$ in certain terms, and then arranged the terms properly so that the terms in parenthesis eventually cancel.
\end{proof}

We are ready to check the relation \eqref{bbi=j}. By \eqref{bi0-embedding}--\eqref{bi1-embedding}, the LHS of \eqref{bbi=j}  is equal to 
\begin{align*}
&\Big[x_i^+-x_i^-,x_{i,1}^++x_{i,1}^-+\tfrac12\sum_{\alpha\in\Phi^+}\big\{\big[x_i^+,x_{\alpha}^+\big],x_{\alpha}^+\big\}-\tfrac12\big\{x_i^+,\xi_i\big\}\Big]\\
\stackrel{\eqref{eq:relXX}}{=}\,&[x_i^+,x_{i,1}^+]+2\xi_{i,1}-[x_i^-,x_{i,1}^-]-\tfrac12\big\{x_i^+,[x_i^+,\xi_i]\big\}-\tfrac12\{\xi_i,\xi_i\}\\
&\hskip1cm +\tfrac12\big\{x_i^+,[x_i^-,\xi_i]\big\}+\hf\Big[x_i^+-x_i^-,\sum_{\alpha\in\Phi^+}\big\{[x_i^+,x_\alpha^+],x_{\alpha}^+\big\}\Big]\\
\overset{\eqref{eq:relHX}}{\underset{\eqref{eq:relexXX}}{=}}\,&
-\tfrac12(\alpha_i,\alpha_i)(x_i^+)^2+2\xi_{i,1}-\tfrac12(\alpha_i,\alpha_i)(x_i^-)^2+(\alpha_i,\alpha_i)(x_i^+)^2-\xi_i^2\\
&\hskip1cm+\tfrac12(\alpha_i,\alpha_i)\{x_i^+,x_i^-\}+\hf\Big[x_i^+-x_i^-,\sum_{\alpha\in\Phi^+}\big\{[x_i^+,x_\alpha^+],x_{\alpha}^+\big\}\Big]\\
\overset{\eqref{bi0-embedding}}{\underset{\eqref{helper1}}{=}}\,&\,2\xi_{i,1}-\tfrac12(\alpha_i,\alpha_i)\varphi(b_{i,0})^2-\xi_i^2+\sum_{\alpha\in\Phi^+}(\alpha,\alpha_i)(x_\alpha^+)^2\\\stackrel{\eqref{hi1-embedding}}{=}~&\varphi(h_{i,1})-\tfrac12(\alpha_i,\alpha_i)\varphi(b_{i,0})^2,
\end{align*}
which is exactly the RHS of \eqref{bbi=j}.

The verification of \eqref{bbinej} is similar with the help of \eqref{eq:relexXX} and \eqref{helper2}. We omit the details.

\subsubsection{The relation \eqref{redhh}}
Let us verify the more complicated relation \eqref{redhh}, i.e.
\beq\label{helper3}
\big[\varphi(h_{i,1}),\varphi(h_{j,1})\big]=0.
\eeq
For that purpose, let
\[
\tl v_i:=\tfrac14\sum_{\alpha\in\Phi^+}(\alpha,\alpha_i)\big\{x_\alpha^+,x_{\alpha}^-\big\}.
\]
Then it follows from \eqref{tlxidef}, \eqref{eq:J}, and \eqref{hi1-embedding} that
\[
J(\xi_i)=\tl \xi_{i,1}+\tl v_i,\qquad \varphi(h_{i,1})=2J(\xi_i)-2\tl v_i+\sum_{\alpha\in\Phi^+}(\alpha,\alpha_i)(x_\alpha^+)^2
\]
It is known from \cite[Theorem~2.6]{GRW19} that
\[
[J(\xi_i)-\tl v_i,J(\xi_j)-\tl v_j]=0.
\]
Thus the LHS of \eqref{helper3} is reduced to
\beq\label{helper4}
\begin{split}
2\sum_{\beta\in\Phi^+}(\beta,\alpha_j)\big\{[J(\xi_i),x_\beta^+],x_\beta^+\big\}&-2\sum_{\beta\in\Phi^+}(\beta,\alpha_j)\big\{[\tl v_i,x_\beta^+],x_\beta^+\big\}\\
-2\sum_{\alpha\in\Phi^+}(\alpha,\alpha_i)\big\{[J(\xi_j),x_\alpha^+],x_\alpha^+\big\}&+2\sum_{\alpha\in\Phi^+}(\alpha,\alpha_i)\big\{[\tl v_j,x_\alpha^+],x_\alpha^+\big\}\\
&+\sum_{\alpha,\beta\in\Phi^+}(\alpha,\alpha_i)(\beta,\alpha_j)\big[(x_\alpha^+)^2,(x_\beta^+)^2\big].
\end{split}
\eeq
By \cite[Proposition~3.21]{GNW18}, we have
\[
[J(\xi_i),x_\beta^+]=(\beta,\alpha_i)J(x_\beta^+),\qquad [J(\xi_j),x_\alpha^+]=(\alpha,\alpha_j)J(x_\alpha^+).
\]
Thus the two summations in \eqref{helper4} involving $J(\xi_i)$ or $J(\xi_j)$  cancel. Further notice 
\begin{align*}
[\tl v_i,x_\beta^+]&=\Big[\tfrac18 c_{\g} \xi_i+\tfrac12\sum_{\alpha\in\Phi^+}(\alpha,\alpha_i)x_\alpha^-x_\alpha^+,x_\beta^+\Big]\\
&=\tfrac18 c_{\g}(\beta,\alpha_i)x_\beta^++\tfrac12\sum_{\alpha\in\Phi^+}(\alpha,\alpha_i)\big([x_\alpha^-,x_\beta^+]x_\alpha^+ + x_\alpha^-[x_\alpha^+,x_\beta^+]\big)
\end{align*}
and a similar equality for $[\tl v_j,x_\alpha^+]$ from \cite[\S3.2 and \S4.4]{GNW18}, where $c_{\g}$ is the eigenvalue of the Casimir element $C_{\g}\in \mathrm{U}(\g)$ acting on the adjoint representation. Therefore, the expression \eqref{helper4} can be rewritten as
\begin{align*}
    &\sum_{\alpha,\beta\in\Phi^+}(\alpha,\alpha_i)(\beta,\alpha_j)\big([x_\beta^-,x_\alpha^+]x_\beta^+x_\alpha^++x_\alpha^+[x_\beta^-,x_\alpha^+]x_\beta^++x_\beta^-[x_\beta^+,x_\alpha^+]x_\alpha^++x_\alpha^+x_\beta^-[x_\beta^+,x_\alpha^+]\big)
    \\-&\sum_{\alpha,\beta\in\Phi^+}(\alpha,\alpha_i)(\beta,\alpha_j)\big([x_\alpha^-,x_\beta^+]x_\alpha^+x_\beta^+ +x_\beta^+[x_\alpha^-,x_\beta^+]x_\alpha^+ + x_\alpha^-[x_\alpha^+,x_\beta^+]x_\beta^+ + x_\beta^+x_\alpha^-[x_\alpha^+,x_\beta^+]\big)\\
    +&\sum_{\alpha,\beta\in\Phi^+}(\alpha,\alpha_i)(\beta,\alpha_j)\big(x_\alpha^+[x_\alpha^+,x_\beta^+]x_\beta^++x_\alpha^+x_\beta^+[x_\alpha^+,x_\beta^+]+[x_\alpha^+,x_\beta^+]x_\beta^+x_\alpha^++x_\beta^+[x_\alpha^+,x_\beta^+]x_\alpha^+\big).
\end{align*}

We need to show that all terms cancel. 

Clearly, the terms for $\alpha=\beta$ cancel. 

Next, we show that if the terms involving exactly one negative root vector cancel. Here we treat the commutators as root vectors. There are two situations: $x_{\alpha_0}^-x_{\alpha_0+\beta_0}^+x_{\beta_0}^+$ or $x_{\beta_0}^+x_{\alpha_0}^-x_{\alpha_0+\beta_0}^+$ for any given $\alpha_0,\beta_0\in\Phi^+$ such that $\alpha_0+\beta_0\in\Phi^+$. We first consider the occurrence of $x_{\alpha_0}^-x_{\alpha_0+\beta_0}^+x_{\beta_0}^+$. Then it appears in the following 4 terms
\begin{align*}
(\beta_0,\alpha_i)(\alpha_0+\beta_0,\alpha_j)[x_{\alpha_0+\beta_0}^-,x_{\beta_0}^+]&x_{\alpha_0+\beta_0}^+x_{\beta_0}^+\\=(\beta_0,\alpha_i)(\alpha_0+\beta_0,\alpha_j)&\eta_{-\alpha_0-\beta_0,\beta_0}x_{\alpha_0}^-x_{\alpha_0+\beta_0}^+x_{\beta_0}^+,\\
-(\alpha_0+\beta_0,\alpha_i)(\beta_0,\alpha_j)[x_{\alpha_0+\beta_0}^-,x_{\beta_0}^+]&x_{\alpha_0+\beta_0}^+x_{\beta_0}^+\\
=-(\alpha_0+\beta_0,\alpha_i)(\beta_0,\alpha_j)&\eta_{-\alpha_0-\beta_0,\beta_0}x_{\alpha_0}^-x_{\alpha_0+\beta_0}^+x_{\beta_0}^+,\\
(\beta_0,\alpha_i)(\alpha_0,\alpha_j)x_{\alpha_0}^-[x_{\alpha_0}^+,x_{\beta_0}^+]x_{\beta_0}^+&=(\beta_0,\alpha_i)(\alpha_0,\alpha_j)\eta_{\alpha_0,\beta_0}x_{\alpha_0}^-x_{\alpha_0+\beta_0}^+x_{\beta_0}^+,
\\
-(\alpha_0,\alpha_i)(\beta_0,\alpha_j)x_{\alpha_0}^-[x_{\alpha_0}^+,x_{\beta_0}^+]x_{\beta_0}^+&=-(\alpha_0,\alpha_i)(\beta_0,\alpha_j)\eta_{\alpha_0,\beta_0}x_{\alpha_0}^-x_{\alpha_0+\beta_0}^+x_{\beta_0}^+.
\end{align*}
By \eqref{eta}, we have $\eta_{\alpha_0,\beta_0}+\eta_{-\alpha_0-\beta_0,\beta_0}=0$ and hence these 4 terms cancel. A similar calculation also works for the case $x_{\beta_0}^+x_{\alpha_0}^-x_{\alpha_0+\beta_0}^+$.

Finally, we consider the remaining case when all monomials are products of positive root vectors. It suffices to consider the products like $x_{\alpha_0}^+x_{\beta_0}^+x_{\alpha_0+\beta_0}^+$ for fixed $\alpha_0,\beta_0\in\Phi^+$ such that $\alpha_0+\beta_0\in\Phi^+$. Then by a similar calculation as above using \eqref{eta}, these products simplify as
\begin{align*}
&\big((\alpha_0,\alpha_i)(\beta_0,\alpha_j)-(\alpha_0,\alpha_j)(\beta_0,\alpha_i)\big)\eta_{\alpha_0,\beta_0}x_{\alpha_0}^+x_{\alpha_0+\beta_0}^+x_{\beta_0}^+\\
+\,&\big((\alpha_0,\alpha_i)(\beta_0,\alpha_j)-(\alpha_0,\alpha_j)(\beta_0,\alpha_i)\big)\eta_{\alpha_0,\beta_0}x_{\beta_0}^+x_{\alpha_0+\beta_0}^+x_{\alpha_0}^+\\
+\,&(\alpha_0,\alpha_j)(\beta_0,\alpha_i) \eta_{\alpha_0,\beta_0}\big(x_{\alpha_0}^+x_{\beta_0}^+x_{\alpha_0+\beta_0}^+ + x_{\alpha_0+\beta_0}^+x_{\beta_0}^+x_{\alpha_0}^+\big)\\
-\,&(\alpha_0,\alpha_i)(\beta_0,\alpha_j) \eta_{\alpha_0,\beta_0}\big(x_{\beta_0}^+x_{\alpha_0}^+x_{\alpha_0+\beta_0}^+ + x_{\alpha_0+\beta_0}^+x_{\alpha_0}^+x_{\beta_0}^+\big)\\
=\,&(\alpha_0,\alpha_j)(\beta_0,\alpha_i)\eta_{\alpha_0,\beta_0}\big[x_{\alpha_0}^+,[x_{\beta_0}^+,x_{\alpha_0+\beta_0}^+]\big]-(\alpha_0,\alpha_i)(\beta_0,\alpha_j)\eta_{\alpha_0,\beta_0}\big[x_{\beta_0}^+,x_{\alpha_0}^+[,x_{\alpha_0+\beta_0}^+]\big],
\end{align*}
which is clearly zero as $2(\alpha_0+\beta_0)\notin \Phi$.

Now the verification of \eqref{helper4} is complete.

\subsection{Proof of Theorem \ref{thm:embedmaintext}}\label{sec:pfthmB}
For Part (1), we have verified that $\varphi$ induces an algebra homomorphism in \S\ref{sec:pfthmB0} (we also need Appendix \ref{sec:app+} for types $\mathsf A_1$ and $\mathsf B_2\cong \mathsf C_2$). Therefore, it remains to show the homomorphism $\varphi$ is injective.

Recall the filtration on $\Y$ given by setting $\mathrm{deg}\,\xi_{i,r}=\mathrm{deg}\,x_{i,r}^\pm=r$ and the algebra isomorphism $\rho$ from \eqref{ass}. Also recall the filtration on $\Yi$ given by setting $\mathrm{deg}\,h_{i,2r+1}=2r+1$ and $\mathrm{deg}\,b_{i,r}=r$ and the algebra isomorphism $\varrho$ from \eqref{assi}. Clearly, the homomorphism $\varphi$ is a filtered algebra homomorphism and hence induces the associated graded homomorphism
\begin{align*}
\mathrm{\gr}\,\varphi:~&\mathrm{\gr}\,\Yi\to \mathrm{\gr}\,\Y,\\
&\bar{b}_{i,r}\mapsto (\bar x_{i,r}^+-(-1)^r\bar x_{i,r}^-),\quad \bar h_{i,2r+1}\mapsto 2\bar\xi_{i,2r+1}.
\end{align*}
Identifying $\mathrm{\gr}\,\Yi$ with $\mathrm{U}(\g[z]^{\check\omega})$ via \eqref{assi} and $\mathrm{\gr}\,\Y$ with $\mathrm{U}(\g[z])$ via \eqref{ass}, then $\mathrm{gr}\,\varphi$ is the natural embedding of $\mathrm{U}(\g[z]^{\check\omega})$ into $\mathrm{U}(\g[z])$. Therefore, $\mathrm{gr}\,\varphi$ is injective, implying further that $\varphi$ is injective.

Now we prove Part (2). We identify $\Yi$ as a subalgebra of $\Y$ via $\varphi$. The equation \eqref{coprohi1} follows from a straightforward calculation using \eqref{copro-efh} and \eqref{coproxii1}. Thus we have $\Delta(b_{i,0})\in \Yi\otimes \Y$ and $\Delta(h_{i,1})\in \Yi\otimes \Y$. Since $\Yi$ is generated by $h_{i,1}$ and $b_{i,0}$, we conclude that $\Delta(\Yi)\subset \Yi\otimes \Y$. Hence $\Yi$ is a right coideal subalgebra of $\Y$.

Finally, we prove Part (3). We identify $\mathrm{gr}\,\Y$ with   $\mathrm{U}(\g[z])$ via the isomorphism $\rho$ from \eqref{ass}. In the $J$ presentation, by \eqref{eq:J} the images of $x\in \g$ and $J(x)$ are given by $x$ and $xz$, respectively. Note that $\YiJ$ is generated by $b_{\alpha}=x_{\alpha}^+-x_{\alpha}^-$, and
\[
B(\xi_i)=J(\xi_i)-\tfrac14[\xi_i,C_{\mathfrak k}],\qquad B(y_\alpha)=J(x_{\alpha}^++x_{\alpha}^-)-\tfrac14[x_{\alpha}^++x_{\alpha}^-,C_{\mathfrak k}].
\]
By \eqref{ass}, the images of $b_{\alpha}$, $B(\xi_i)$ and $B(y_\alpha)$ in the associated graded of $\Y$ (considered as $\mathrm{U}(\g[z])$) are $x_{\alpha}^+-x_{\alpha}^-$, $\xi_iz$, and $(x_{\alpha}^++x_{\alpha}^-)z$, respectively. Thus the image of $\YiJ$ in the associated graded is the subalgebra $\mathrm{U}(\g[z]^{\check \omega})$. Note that it known from above that the image of $\Yi$ in the associated graded of $\Y$ is also the subalgebra $\mathrm{U}(\g[z]^{\check \omega})$. Thus, to prove that $\Yi=\YiJ$, it suffices to show that $\Yi\subset \YiJ$. Since $b_{i,0}\in \YiJ$ and $\Yi$ is generated by $b_{i,0}$ and $h_{i,1}$, this reduces to showing that $h_{i,1}\in \YiJ$. To see this, note that
\begin{align*}
&2B(\xi_i)-\tfrac{1}2\sum_{\alpha\in\Phi^+}(\alpha_i,\alpha)(b_\alpha)^2\\
=~~\,& 2J(\xi_i)- \tfrac12[\xi_i,C_{\mathfrak k}]+\tfrac{1}2\sum_{\alpha\in\Phi^+}(\alpha_i,\alpha)(x_{\alpha}^+-x_{\alpha}^-)^2\\
\overset{\eqref{Ck}}{\underset{\eqref{eq:J}}{=}}\,\,&2\xi_{i,1}-\xi_i^2+\tfrac12\sum_{\alpha\in\Phi^+}\Big((\alpha,\alpha_i)\{x_\alpha^+,x_\alpha^-\}+\tfrac12\Big[\xi_i,(x_\alpha^+-x_\alpha^-)^2\Big]+(\alpha,\alpha_i)(x_{\alpha}^+-x_{\alpha}^-)^2\Big)\\
\stackrel{\eqref{eq:relHX}}{=}\,\,&2\xi_{i,1}-\xi_i^2+\tfrac12\sum_{\alpha\in\Phi^+}(\alpha,\alpha_i)\Big(\{x_\alpha^+,x_\alpha^-\}+\tfrac12\big\{x_\alpha^++x_\alpha^-,x_\alpha^+-x_\alpha^-\big\}+(x_{\alpha}^+-x_{\alpha}^-)^2\Big)\\
=~~\,&2\xi_{i,1}-\xi_i^2+\sum_{\alpha\in\Phi^+}(\alpha,\alpha_i)(x_\alpha^+)^2=h_{i,1}.
\end{align*}
Now the proof of Theorem \ref{thm:embedmaintext} is complete.

\section{Coproduct of Drinfeld generators}\label{sec:estimate}
\subsection{Estimates of coproduct}
From now on, we regard the twisted Yangian $\Yi$ as a right coideal subalgebra of the Yangian $\Y$ via the homomorphism $\varphi$ defined by \eqref{hi1-embedding}--\eqref{bi1-embedding}. 

Our next main result is an estimate of the generators $h_{i,2r+1}$, $b_{i,r}$ in terms of the elements $\xi_{j,s}$ and $x_{j,s}^\pm$.

Set 
\begin{align*}
h_i(u)=1+\sum_{r\gge 0}h_{i,2r+1}u^{-2r-2},\qquad b_i(u)=\sum_{r\gge 0}b_{i,r}u^{-r-1}.
\end{align*}
\begin{thm}\label{thm:hb}
Let $\g$ be a simple Lie algebra which is not of type $\mathsf G_2$. We have
\begin{align}
&h_i(u)\equiv \xi_i(u)\xi_i(-u) &\pmod{\Y_{Q_+}[\![u^{-1}]\!]},\label{eq:h-est}\\
&b_i(u)\equiv \tfrac12 \{x_i^+(u),\xi_i(-u)\}+x_i^-(-u)   &\pmod{\Y_{\alpha_i+Q_+}^{\gge 0}[\![u^{-1}]\!]},\label{eq:b-est}\\
&\Delta(h_i(u))\equiv h_i(u)\otimes\xi_i(u)\xi_i(-u)  &\pmod{\Yi\otimes \Y_{Q_+}[\![u^{-1}]\!]},\label{eq:h-co-est}\\
&\Delta(b_i(u))\equiv b_i(u)\otimes\xi_i(-u)+1\otimes b_i(u) &\pmod{\Yi\otimes \Y_{Q_+}[\![u^{-1}]\!]}.\label{eq:b-co-est}
\end{align}
\end{thm}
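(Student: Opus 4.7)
The plan is to establish \eqref{eq:h-est} and \eqref{eq:b-est} by induction on the degree in $u^{-1}$ carried out in generating-series form, and then deduce \eqref{eq:h-co-est}--\eqref{eq:b-co-est} by combining these with Lemma \ref{lem:copro} and the coideal property $\Delta(\Yi)\subset \Yi\otimes\Y$ from Theorem \ref{thm:embedmaintext}(2). Throughout, I identify $\Yi$ with its image in $\Y$ under $\varphi$ and write $H_i(u):=\xi_i(u)\xi_i(-u)$, $B_i(u):=\tfrac12\{x_i^+(u),\xi_i(-u)\}+x_i^-(-u)$ for the proposed right-hand sides. The base cases are immediate from substituting the explicit formulas \eqref{hi1-embedding}--\eqref{bi1-embedding}: $\varphi(h_{i,1})$ differs from the $u^{-2}$ coefficient of $H_i(u)$ by $\sum_\alpha(\alpha,\alpha_i)(x_\alpha^+)^2\in\Y_{Q_+}$, and $\varphi(b_{i,0})$, $\varphi(b_{i,1})$ agree with the $u^{-1},u^{-2}$ coefficients of $B_i(u)$ up to $\tfrac12\sum_\alpha\{[x_i^+,x_\alpha^+],x_\alpha^+\}\in\Y^{\gge 0}_{\alpha_i+Q_+}$.

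For the inductive step, the generating-series form of \eqref{eq:hi1bjr-new} reads $[h_{i,1},b_j(u)]=2(\alpha_i,\alpha_j)(u\,b_j(u)-b_{j,0})$, which recursively determines all $b_j(u)$ from $b_{j,0}$ and $h_{i,1}$. A direct computation using \eqref{tlxicom} yields
\[
[2\xi_{i,1}-\xi_i^2,B_i(u)] = 2(\alpha_i,\alpha_i)u\,B_i(u) - 2(\alpha_i,\alpha_i)b_{i,0} -(\alpha_i,\alpha_i)\{x_i^+,\xi_i(-u)-1\},
\]
so the Cartan part of $\varphi(h_{i,1})$ reproduces the recurrence up to the weight-$\alpha_i$ residue in the last term. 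This residue is killed exactly by the $\alpha=\alpha_i$ contribution of the positive-root correction in $\varphi(h_{i,1})$, namely $(\alpha_i,\alpha_i)[(x_i^+)^2,x_i^-(-u)]=(\alpha_i,\alpha_i)\{x_i^+,\xi_i(-u)-1\}$, while the remaining $\alpha\neq\alpha_i$ commutators all have weight in $\alpha_i+Q_+$ and lie in $\Y^{\gge 0}$. Since $\Y^{\gge 0}_{\alpha_i+Q_+}$ is stable under $\mathrm{ad}(h_{i,1})$, an induction then propagates \eqref{eq:b-est} to all orders. The estimate \eqref{eq:h-est} follows from \eqref{ty2} by extracting $h_{i,2r+1}=(-1)^{r+1}[b_{i,r+1},b_{i,r}]+\tfrac{(-1)^r}{2}(\alpha_i,\alpha_i)b_{i,r}^2$ and substituting \eqref{eq:b-est} to compare with the corresponding coefficient of $H_i(u)$ modulo $\Y_{Q_+}$.

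For the coproduct estimates, applying Lemma \ref{lem:copro} to $\Delta(\xi_i(\pm u))$ and multiplying yields
\[
\Delta(H_i(u))\equiv H_i(u)\otimes H_i(u) \pmod{\Y\otimes\Y_{Q_+}[\![u^{-1}]\!]},
\]
since every error term in Lemma \ref{lem:copro} has second tensor factor in $\Y^{\gge 0}_{Q_+}$, a property preserved under right multiplication. Combined with \eqref{eq:h-est} and the coideal property $\Delta(h_i(u))\in\Yi\otimes\Y[\![u^{-1}]\!]$, this pins down the $(Q_-\cup\{0\})$-weight components of $\Delta(h_i(u))$ to agree with those of $h_i(u)\otimes H_i(u)$, while the $Q_+$-weight components automatically lie in $\Yi\otimes\Y_{Q_+}$ by the coideal property; this gives \eqref{eq:h-co-est}. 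The argument for \eqref{eq:b-co-est} is parallel: Lemma \ref{lem:copro} yields $\Delta(B_i(u))\equiv B_i(u)\otimes\xi_i(-u)+1\otimes x_i^-(-u)\pmod{\Y\otimes\Y_{Q_+}}$, which combined with \eqref{eq:b-est} and the coideal property yields the claim, following the idea of \cite{Prz23}.

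The principal technical obstacle is the cancellation of the weight-$\alpha_i$ Cartan residue against the $(x_i^+)^2$-term in the inductive step for \eqref{eq:b-est}: the coefficients in the embedding formula \eqref{hi1-embedding} are chosen precisely to make this cancellation occur, which is what makes the explicit identification in Theorem \ref{thm:embedmaintext} indispensable here rather than a mere ``filtered isomorphism''. A secondary subtlety is promoting the coproduct congruences from ``$\pmod{\Y\otimes\Y_{Q_+}}$'' to ``$\pmod{\Yi\otimes\Y_{Q_+}}$'' via the weight decomposition of the second tensor factor together with the coideal property.
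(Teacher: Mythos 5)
Your estimates \eqref{eq:b-est} and \eqref{eq:h-est} are handled essentially as in the paper: the induction for \eqref{eq:b-est} via $[h_{i,1},b_i(u)]=2(\alpha_i,\alpha_i)(ub_i(u)-b_{i,0})$, with exactly the cancellation you display between the Cartan residue and the $(x_i^+)^2$-term, is the paper's argument; and extracting $h_{i,2r+1}$ from \eqref{ty2} and inserting \eqref{eq:b-est} is the same idea as the paper's use of \eqref{bi0biu}, although you leave the resulting comparison with $\xi_i(u)\xi_i(-u)$ unperformed — in the paper this is a genuine computation using \eqref{xi+xi-}, \eqref{xixi-}, \eqref{xiuxi0}, made tractable precisely by working with the series identity \eqref{bi0biu} rather than a fixed coefficient.

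The genuine gap is in the coproduct estimates \eqref{eq:h-co-est}--\eqref{eq:b-co-est}. Knowing $\Delta(\xi_i(u)\xi_i(-u))\equiv \xi_i(u)\xi_i(-u)\otimes\xi_i(u)\xi_i(-u)$ modulo $\Y\otimes\Y_{Q_+}[\![u^{-1}]\!]$, the estimate \eqref{eq:h-est}, and the coideal property do \emph{not} ``pin down'' the remaining components. Writing $h_i(u)=\xi_i(u)\xi_i(-u)+\theta(u)$ with $\theta(u)\in\Y_{Q_+}[\![u^{-1}]\!]$, the error $\Theta(u)=\Delta(h_i(u))-h_i(u)\otimes\xi_i(u)\xi_i(-u)$ contains the uncontrolled piece $\Delta(\theta(u))-\theta(u)\otimes\xi_i(u)\xi_i(-u)$, whose second tensor factor a priori has components of weight $0$ or in $Q_-$ (for instance the counit-type term $\theta(u)\otimes 1$ sits there); the coideal property only places the first tensor factor of $\Theta(u)$ in $\Yi$ and by itself rules out nothing. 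What kills these bad components in the paper is a structural fact about $\Yi$, namely Lemma \ref{lem:cap=0}, $\Yi\cap\Y_{Q_{j,+}}=\{0\}$, proved from the associated-graded description \eqref{assi} (weights occurring in the leading term of an element of $\Yi$ come in $\pm$ pairs), combined with the decomposition $\Y=\Y_{Q_{j,+}}\oplus\Y_{Q_{j,-}}$ for each $j$ and the observation that the offending components have first factor with strictly positive $\alpha_j$-coefficient. You never formulate or invoke any such vanishing statement, and your closing remark that the upgrade to $\Yi\otimes\Y_{Q_+}$ is a ``secondary subtlety'' handled by weight decomposition plus the coideal property indicates the missing idea has not been identified: those two ingredients alone are insufficient. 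The same gap affects your derivation of \eqref{eq:b-co-est}; note the paper proves \eqref{eq:b-co-est} differently and more robustly, by a direct induction using the explicit formula \eqref{coprohi1} for $\Delta(h_{i,1})$ and the recursion $[\Delta(h_{i,1}),\Delta(b_i(u))]=2(\alpha_i,\alpha_i)\big(u\Delta(b_i(u))-\Delta(b_{i,0})\big)$, which sidesteps the issue entirely and would be the simplest repair for that item; for \eqref{eq:h-co-est} you need Lemma \ref{lem:cap=0} (or an equivalent) in addition to the ingredients you list.
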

The theorem will be proved in Section \ref{sec:proof-thmC}. For the case $\g=\mathfrak{sl}_2$, we prove a stronger result (Conjecture \ref{conj}) in Appendix \ref{sec:app} (see Proposition \ref{prop-sl2}) via the isomorphism between twisted Yangians in their R-matrix and Drinfeld presentations \cite{LWZ25GD}. Theorem~\ref{thm:hb} is the twisted Yangian counterpart of \cite[Corollary~9.16]{Prz23} for affine $\imath$quantum groups of split type $\mathsf A$, and of \cite[Theorem~8.1]{LP25} for affine $\imath$quantum groups of split types $\mathsf{BCD}$.

A similar estimate of the form \eqref{eq:h-est} was conjectured in \cite{WZ23} for affine $\imath$quantum groups of split type $\sf A$ in a stronger version. 

\begin{conj}[{\cite{WZ23}}]\label{conj}
We have
\begin{align*}
h_i(u)&\equiv \xi_i(u)\xi_i(-u)  &\pmod{\Y_{Q_+}^{\gge 0}[\![u^{-1}]\!]},\\
\Delta(h_i(u))&\equiv h_i(u)\otimes\xi_i(u)\xi_i(-u) &\pmod{\Yi\otimes \Y^{\gge 0}_{Q_+}[\![u^{-1}]\!]}.
\end{align*}
\end{conj}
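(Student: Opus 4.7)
Plan. My strategy is an induction on the series coefficient, carried out throughout in generating-series form. Abbreviate $H_i(u):=\xi_i(u)\xi_i(-u)$ and $B_i(u):=\tfrac12\{x_i^+(u),\xi_i(-u)\}+x_i^-(-u)$. The base cases are $h_{i,1}$, $b_{i,0}$, $b_{i,1}$, handled at once by the explicit embedding formulas \eqref{hi1-embedding}--\eqref{bi1-embedding}: direct inspection shows that the weight-zero and weight-$\pm\alpha_i$ parts of $\varphi(h_{i,1})$ and $\varphi(b_{i,r})$ (for $r=0,1$) reproduce the corresponding coefficients of $H_i(u)$ and $B_i(u)$, while the residual terms $\sum_\alpha(\alpha,\alpha_i)(x_\alpha^+)^2$ and $\tfrac12\sum_\alpha\{[x_i^+,x_\alpha^+],x_\alpha^+\}$ visibly lie in $\Y_{Q_+}$ and $\Y^{\gge 0}_{\alpha_i+Q_+}$ respectively.

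For the inductive step on $b_{i,r}$, I extract $b_{i,r+1}=\tfrac{1}{2(\alpha_i,\alpha_i)}[h_{i,1},b_{i,r}]$ from \eqref{eq:hi1bjr-new} and split $h_{i,1}=2\tl\xi_{i,1}+W$, where $W=\sum_\alpha(\alpha,\alpha_i)(x_\alpha^+)^2\in\Y^{\gge 0}_{Q_+}$. From \eqref{tlxicom} in series form, namely $[\tl\xi_{i,1},x_i^\pm(v)]=\pm(\alpha_i,\alpha_i)(vx_i^\pm(v)-x_i^\pm)$ together with $[\tl\xi_{i,1},\xi_i(-v)]=0$, a direct expansion gives
\begin{equation*}
[2\tl\xi_{i,1},B_i(v)]=2(\alpha_i,\alpha_i)\bigl(vB_i(v)-B_{i,0}\bigr)-(\alpha_i,\alpha_i)\{x_i^+,\xi_i(-v)-1\}.
\end{equation*}
The weight-$\alpha_i$ residual sits outside the target error space, but is precisely cancelled by the $\alpha=\alpha_i$ summand of $[W,B_i(v)]$ using $[(x_i^+)^2,x_i^-(-v)]=\{x_i^+,\xi_i(-v)-1\}$, which follows from $[x_i^+,x_i^-(-v)]=\xi_i(-v)-1$. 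All remaining pieces of $[h_{i,1},B_i(v)]$, together with the error commutators $[\tl\xi_{i,1},E_r]$ and $[W,E_r]$, belong to $\Y^{\gge 0}_{\alpha_i+Q_+}[\![v^{-1}]\!]$ by a weight count. The higher $h_{i,2r+1}$ are then handled via the generating-series form of \eqref{ty2} at $i=j$,
\begin{equation*}
(u-v)[b_i(u),b_i(v)]-[b_{i,0},b_i(v)]-[b_{i,0},b_i(u)]=\tfrac12(\alpha_i,\alpha_i)\{b_i(u),b_i(v)\}-\frac{2u(h_i(u)-1)+2v(h_i(v)-1)}{u+v},
\end{equation*}
obtained by resumming $\sum_{r,s\gge 0}(-1)^sh_{i,r+s+1}u^{-r-1}v^{-s-1}=\frac{u(h_i(u)-1)+v(h_i(v)-1)}{u+v}$; substituting the established $b_i(\pm u)\equiv B_i(\pm u)$ forces $h_i(u)\equiv H_i(u)$ modulo $\Y_{Q_+}[\![u^{-1}]\!]$.

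For the coproduct estimates I follow the strategy of \cite{Prz23}. Compute $\Delta(H_i(u))=\Delta(\xi_i(u))\Delta(\xi_i(-u))$ via Lemma~\ref{lem:copro}: writing $\Delta(\xi_i(\pm u))=\xi_i(\pm u)\otimes\xi_i(\pm u)+X_i(\pm u)$ with $X_i(\pm u)\in\Y^{\lle 0}_{Q_-}\otimes\Y^{\gge 0}_{Q_+}[\![u^{-1}]\!]$, all cross-terms collect a second tensor factor in $\Y^{\gge 0}_{Q_+}$, whence $\Delta(H_i(u))\equiv H_i(u)\otimes H_i(u)$ modulo $\Y\otimes\Y^{\gge 0}_{Q_+}[\![u^{-1}]\!]$. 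Combined with $h_i(u)\equiv H_i(u)\pmod{\Y_{Q_+}[\![u^{-1}]\!]}$, this yields $\Delta(h_i(u))\equiv h_i(u)\otimes H_i(u)$ modulo $\Y\otimes\Y_{Q_+}[\![u^{-1}]\!]$. The upgrade from $\Y$ to $\Yi$ on the first tensor factor uses the coideal property from Theorem~\ref{thm:embedmaintext}(2): both $\Delta(h_i(u))$ and $h_i(u)\otimes H_i(u)$ already lie in $\Yi\otimes\Y[\![u^{-1}]\!]$, so their difference does as well, and intersecting with $\Y\otimes\Y_{Q_+}[\![u^{-1}]\!]$ gives exactly $\Yi\otimes\Y_{Q_+}[\![u^{-1}]\!]$. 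The same template, applied to $\Delta(B_i(u))$ using the coproduct formulas for $\Delta(x_i^\pm(u))$ and $\Delta(\xi_i(-u))$ from Lemma~\ref{lem:copro}, yields \eqref{eq:b-co-est}; the leading term $B_i(u)\otimes\xi_i(-u)+1\otimes B_i(u)$ is read off after rewriting $\tfrac12 H_i(u)\otimes\{x_i^+(u),\xi_i(-u)\}$ as $1\otimes\tfrac12\{x_i^+(u),\xi_i(-u)\}$ modulo the error space, and the coideal upgrade proceeds as before.

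The main obstacle is the exact cancellation in the inductive step for $b_{i,r}$: the naive use of $B_i(v)$ produces a weight-$\alpha_i$ residual at each iteration that sits outside the target error space $\Y^{\gge 0}_{\alpha_i+Q_+}$, and the proof hinges on the specific shape of $h_{i,1}$ in \eqref{hi1-embedding}---the $(x_i^+)^2$ summand of $W$ is precisely what supplies the compensating term via $[x_i^+,x_i^-(-v)]=\xi_i(-v)-1$. Tracking this cancellation through the series form of \eqref{ty2} and then propagating the resulting estimates to the coproduct through the coideal property occupies the bulk of the argument; once those are in place the remaining verifications are comparatively formal.
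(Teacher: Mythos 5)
There is a genuine gap, and it is the modulus itself: the statement you are asked to prove is the \emph{strong} form of the estimates, with the error confined to the Borel part, i.e.\ modulo $\Y^{\gge 0}_{Q_+}[\![u^{-1}]\!]$ and $\Yi\otimes\Y^{\gge 0}_{Q_+}[\![u^{-1}]\!]$, whereas your argument --- by your own wording --- only produces errors in $\Y_{Q_+}[\![u^{-1}]\!]$ and $\Yi\otimes\Y_{Q_+}[\![u^{-1}]\!]$. That is exactly the weaker Theorem \ref{thm:hb} (\eqref{eq:h-est}, \eqref{eq:h-co-est}), not Conjecture \ref{conj}. The loss of the superscript $\gge 0$ is not cosmetic and is intrinsic to your route: when you extract $h_i(u)$ from the series form of \eqref{ty2} (or from \eqref{bi0biu}), the series $b_i(u)$ and the element $b_{i,0}$ contain the lower-triangular pieces $x_i^-(-u)$ and $x_i^-$, so products of the $\Y^{\gge 0}_{\alpha_i+Q_+}$-error of \eqref{eq:b-est} with these pieces have weight in $Q_+$ but do \emph{not} lie in $\Y^{\gge 0}$. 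This is precisely why the paper states only the $\Y_{Q_+}$-versions in Theorem \ref{thm:hb} for general $\g$, and proves the statement at hand only for $\g=\fksl_2$, by a completely different mechanism: the R-matrix realization, where the identifications \eqref{eq:Y2-R}, \eqref{eq:Y2i-R}, the embedding \eqref{eq:embed-R}, the coproduct \eqref{s-copro}, and the quantum/Sklyanin determinant identities \eqref{eq:qdet}, \eqref{eq:sdet}, \eqref{eq:s=qdet} give control modulo the ideal generated by the coefficients of $F(u)$ inside the Borel (Proposition \ref{prop-sl2}). For general $\g$ the strong form remains open in the paper, so a Drinfeld-presentation induction of the kind you propose would need a new idea to keep all error terms in $\Y^{\gge 0}$; as written it cannot.

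A secondary gap concerns your coproduct step even at the weaker level. From $h_i(u)=\xi_i(u)\xi_i(-u)+\theta(u)$ with $\theta(u)\in\Y_{Q_+}[\![u^{-1}]\!]$ it does \emph{not} follow formally that $\Delta(h_i(u))\equiv h_i(u)\otimes\xi_i(u)\xi_i(-u)$ modulo $\Y\otimes\Y_{Q_+}[\![u^{-1}]\!]$: the coproduct only preserves total weight, so $\Delta(\theta(u))$ can have components whose second tensor factor has weight outside $Q_+$. Your final ``intersect $\Yi\otimes\Y$ with $\Y\otimes\Y_{Q_+}$'' step is fine as linear algebra, but you never establish that the discrepancy lies in $\Y\otimes\Y_{Q_+}[\![u^{-1}]\!]$ to begin with. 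The paper closes exactly this hole by splitting the discrepancy according to the decomposition $\Y=\Y_{Q_{j,+}}\oplus\Y_{Q_{j,-}}$ in the second factor and invoking $\Yi\cap\Y_{Q_{j,+}}=\{0\}$ (Lemma \ref{lem:cap=0}), an ingredient absent from your proposal; and again, even with that repair one only reaches the $\Y_{Q_+}$-modulus, not the $\Y^{\gge 0}_{Q_+}$-modulus demanded by the conjecture.
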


Proposition \ref{prop-sl2} gives a positive answer to Conjecture \ref{conj} for the case $\g=\mathfrak{sl}_2$. Note that our approach can be generalized to the affine $\imath$quantum group of split type $\mathsf A_1$ in a straightforward way using the isomorphism between affine $\imath$quantum groups of split type $\mathsf A$ in the R-matrix and Drinfeld presentations \cite{Lu24}. Thus it gives a simpler proof of \cite[Theorem~7.5]{Prz23}.

\subsection{Restriction modules}
Let $\mathscr P_\I:=(1+u^{-2}\bC[\![u^{-2}]\!])^{\I}$ denote the subset of $\I$-tuple of power series in $u^{-1}$ with constant term 1. We call an element in $\mathscr P_{\I}$ an $^\imath\bm\ell$-weight. We write an $^\imath\bm\ell$-weight in the form $\bla=(\la_i(u))_{i\in \I}$, 
where
\[
\la_i(u)=1+\sum_{r>0}\la_{i,2r-1}u^{-2r}.
\]
For a finite-dimensional $\Yi$-module $\mc M$ and $\bla\in\mathscr P_{\I}$, the \textit{$^\imath\bm\ell$-weight space} of $^\imath\bm\ell$-weight $\bla$ is a subspace of $\mc M$ defined by
\[
\mc M_{\bla}:=\big\{v\in \mc M\mid \forall~ i\in\I \text{ and }r>0, \exists~ p>0 \text{ such that } (h_{i,2r-1}-\la_{i,2r-1})^pv=0\big\}.
\]
If $\mc M_{\bla}\ne 0$, then we say $\bla$ is an $^\imath\bm\ell$-weight of $\mc M$.

The estimate \eqref{eq:h-est} can be used to obtain the spectrum of $h_i(u)$ acting on a finite-dimensional $\Y$-module regarded as a $\Yi$-module via restriction. 

For a monic polynomial $P(u)$ in $u$, denote
\[
P^-(u)=(-1)^{\deg P(u)}P(-u).
\]
Note that $P^-(u)$ is the monic polynomial whose roots are opposite to that of $P(u)$. 

The following is an immediate corollary of \cite[Theorem~1]{Kn95} and \eqref{eq:h-est} as $h_i(u)$ and $\xi_i(u)\xi_i(-u)$ share the same eigenvalues.

\begin{cor}\label{cor:cha}
Let $\g$ be a simple Lie algebra which is not of type $\mathsf G_2$. Let $\mathscr V$ be a finite-dimensional $\Y$-module regarded as a $\Yi$-module via restriction. Then an $^\imath\bm\ell$-weight $\bm\la$ of $\mathscr V$ has the form
\[
\la_i(u)=\frac{\Xi_i(u+\hf d_i)\Xi_i^-(u-\hf d_i)}{\Xi_i(u-\hf d_i)\Xi_i^-(u+\hf d_i)},\qquad \text{ for }i\in \I,
\]
where $\Xi_i(u)$ is a monic polynomial in $u$.
\end{cor}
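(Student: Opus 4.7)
The plan is short: the corollary follows by combining Knight's description of the spectrum of $\xi_i(u)$ on finite-dimensional Yangian modules with the estimate \eqref{eq:h-est}, as the hint in the paper suggests. Let me spell out the steps.

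First I would apply \cite[Theorem~1]{Kn95} to the finite-dimensional $\Y$-module $\mathscr V$ to obtain the standard $\ell$-weight decomposition: on each $\Y$-$\ell$-weight space the generating series $\xi_i(u)$ has generalized eigenvalue of the Drinfeld-polynomial form $\Xi_i(u+\hf d_i)/\Xi_i(u-\hf d_i)$ for some monic polynomial $\Xi_i(u)\in \bC[u]$. Substituting $u\mapsto -u$ and rewriting using $P^-(u)=(-1)^{\deg P}P(-u)$ for monic $P$, the generalized eigenvalue of $\xi_i(-u)$ on the same space is $\Xi_i^-(u-\hf d_i)/\Xi_i^-(u+\hf d_i)$. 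Multiplying, the product $\xi_i(u)\xi_i(-u)$ has generalized eigenvalue exactly
\[
\frac{\Xi_i(u+\hf d_i)\,\Xi_i^-(u-\hf d_i)}{\Xi_i(u-\hf d_i)\,\Xi_i^-(u+\hf d_i)},
\]
i.e.\ the expression in the statement.

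Next I would transfer this to $h_i(u)$ via \eqref{eq:h-est}, which reads $h_i(u)\equiv \xi_i(u)\xi_i(-u)\pmod{\Y_{Q_+}[\![u^{-1}]\!]}$. The $Q$-grading on $\Y$ induces a $Q$-grading on $\mathscr V$ via the Cartan $\mathfrak h\subset \g\subset \Y$-action, and elements of $\Y_{Q_+}$ strictly raise the $Q$-degree. Since $\mathscr V$ is finite-dimensional, filtering it by an upper set in the $Q_+$-ordering makes the difference $h_i(u)-\xi_i(u)\xi_i(-u)$ strictly upper-triangular; in particular this difference contributes only nilpotent parts to any generalized eigenspace of the commuting family $\{h_{i,2r+1}\}$. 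Thus the generalized eigenvalue of $h_i(u)$ on an $^\imath\bm\ell$-weight space of $\mathscr V$ coincides with the generalized eigenvalue of $\xi_i(u)\xi_i(-u)$ on some $Q$-homogeneous component inside it, which is of the form obtained in the previous step.

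The only real content beyond citing \eqref{eq:h-est} and Knight's theorem is the upper-triangularity argument in the second paragraph; this is routine linear algebra once one notices that $[h_{i,2r+1},\mathfrak h]=0$ and that the $Q$-grading restricts nicely to $\mathscr V$, so I do not expect any genuine obstacle.
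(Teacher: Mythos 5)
Your argument is essentially the paper's own proof: the paper disposes of this corollary in one sentence, as an immediate consequence of \cite[Theorem~1]{Kn95} and the estimate \eqref{eq:h-est}, precisely because $h_i(u)$ and $\xi_i(u)\xi_i(-u)$ share the same generalized eigenvalues on a finite-dimensional module; your two paragraphs merely make the Knight computation (including the correct bookkeeping with $\Xi_i^-$) and the triangularity explicit. One caveat: your parenthetical claim that $[h_{i,2r+1},\h]=0$ is false --- already $h_{i,1}=2\xi_{i,1}-\xi_{i,0}^2+\sum_{\alpha\in\Phi^+}(\alpha,\alpha_i)(x_\alpha^+)^2$ does not commute with the $\xi_{j,0}$ because of the $(x_\alpha^+)^2$ terms --- so the $h_{i,2r+1}$ do not preserve the weight spaces of $\mathscr V$, and an $^\imath\bm\ell$-weight space need not be $Q$-homogeneous. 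This does not damage the proof: all the triangularity argument needs is that $h_i(u)-\xi_i(u)\xi_i(-u)\in\Y_{Q_+}[\![u^{-1}]\!]$ while $\xi_i(u)\xi_i(-u)$ has weight zero, so for any total order on the finitely many weights of $\mathscr V$ refining the $Q_+$-partial order the commuting operators $h_{i,2r+1}$ act block-triangularly with diagonal blocks given by $\xi_i(u)\xi_i(-u)$ on the weight spaces; hence their joint generalized spectrum is contained in that of $\xi_i(u)\xi_i(-u)$, which Knight's theorem puts in the stated form. With that one sentence repaired (phrase the conclusion via the filtration/associated graded rather than ``some $Q$-homogeneous component inside'' the $^\imath\bm\ell$-weight space), your proposal matches the intended proof.
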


\subsection{Proof of Theorem \ref{thm:hb}}\label{sec:proof-thmC}
We record some equalities that will be used in the proof:
\begin{align}
&[x_i^+,x_i^-(u)]=\xi_i(u)-1,\label{xi+xi-}\\
&[x_i^-,x_i^-(u)]=\tfrac12(\alpha_i,\alpha_i)(x_i^-(u))^2,\label{xixi-}\\
&[\tl\xi_{i,1},x_{j}^\pm(u)]=\pm(\alpha_i,\alpha_j)(ux_j^\pm(u)-x_{j}^\pm),\label{xixj1}\\
&[\xi_i(u),x_{i,0}^-]=\tfrac12(\alpha_i,\alpha_i)\{\xi_i(u),x_i^-(u)\},\label{xiuxi0}\\
&[h_{i,1},b_j(u)]=2(\alpha_i,\alpha_j)(ub_j(u)-b_{j,0}),\label{hbcom1}\\
&[b_{i,0},b_i(u)]=h_i(u)-1-\tfrac12(\alpha_i,\alpha_i)(b_i(u))^2.\label{bi0biu}
\end{align}
The equalities \eqref{xi+xi-}, \eqref{xixj1}, and \eqref{hbcom1} follow from \eqref{eq:relXX},  \eqref{tlxicom}, and \eqref{eq:hi1bjr-new}, respectively. The equalities \eqref{xixi-} and \eqref{xiuxi0} can be found in \cite[\S2.4]{GTL16} while the equality \eqref{bi0biu} is obtained from \cite[Lemma~4.4]{LWZ25affine}.

\subsubsection{} We first establish \eqref{eq:b-est} whose component-wise formula is given by
\beq
b_{i,r}\equiv x_{i,r}^+-(-1)^rx_{i,r}^--\tfrac12\sum_{0\lle s<r}(-1)^s \{x_{i,r-s-1}^+,\xi_{i,s} \}.
\eeq
Here $\equiv$ stands for equality modulo $\Y_{\alpha_i+Q_+}^{\gge 0}[\![u^{-1}]\!]$. We prove it by induction on $r$. The base cases $r=0,1$ are clear from \eqref{bi0-embedding} and \eqref{bi1-embedding}. 

Note that by \eqref{hi1-embedding} we have $[h_{i,1},\Y_{\alpha_i+Q_+}^{\gge 0}]\subset \Y_{\alpha_i+Q_+}^{\gge 0}$. Together with \eqref{hbcom1}, it is not hard to see that the induction step is reduced to showing
\beq\label{helper5}
\begin{split}
\big[h_{i,1},&\,\tfrac12 \{x_i^+(u),\xi_i(-u)\}+x_i^-(-u)\big]\\
&\equiv 2(\alpha_i,\alpha_i)u\big(\tfrac12\{x_i^+(u),\xi_i(-u)\}+x_i^-(-u)\big)-2(\alpha_i,\alpha_i)(x_i^+-x_i^-).
\end{split}
\eeq
By \eqref{tlxidef} and \eqref{hi1-embedding}, the LHS of \eqref{helper5} is equal to
\begin{align*}
&\Big[2\tl\xi_{i,1}+\sum_{\alpha\in\Phi^+}(\alpha,\alpha_i)(x_\alpha^+)^2,\tfrac12 \{x_i^+(u),\xi_i(-u)\}+x_i^-(-u)\Big]\\
\stackrel{\eqref{xixj1}}{\equiv}\,&(\alpha_i,\alpha_i)\Big(\big\{ux_i^{+}(u)-x_i^+,\xi_i(-u)\}+ 2\big(ux_i^-(-u)+x_i^-\big)+\big\{[x_i^+,x_i^-(-u)],x_i^+\big\}\Big)\\
\stackrel{\eqref{xi+xi-}}{=}\,&(\alpha_i,\alpha_i)\Big(\big\{ux_i^{+}(u)-x_i^+,\xi_i(-u)\}+ 2\big(ux_i^-(-u)+x_i^-\big)+\big\{\xi_i(-u)-1,x_i^+\big\}\Big)\\
=\hskip0.22cm&2(\alpha_i,\alpha_i)u\big(\tfrac12\{x_i^+(u),\xi_i(-u)\}+x_i^-(-u)\big)-2(\alpha_i,\alpha_i)(x_i^+-x_i^-)
\end{align*}
as required.

\subsubsection{} 
For \eqref{eq:b-co-est}, the corresponding component-wise formula is given by 
\[
\Delta(b_{i,r})\equiv \sum_{0\lle s<r}(-1)^{s+1} b_{i,r-s-1}\otimes \xi_{i,s}+b_{i,r}\otimes 1+1\otimes b_{i,r}.
\]
Here $\equiv$ stands for equality modulo $\Yi\otimes \Y_{Q_+}[\![u^{-1}]\!]$. We prove it again by induction on $r$. The base case $r=0$ is clear while the case $r=1$ follows from \eqref{helper98}. 

Note that by \eqref{coprohi1} we have $[\Delta(h_{i,1}),\Yi\otimes \Y_{Q_+}]\subset \Yi\otimes \Y_{Q_+}$. Together with the relation 
\begin{align*}
\big[\Delta(h_{i,1}),\Delta(b_i(u))\big]=2(\alpha_i,\alpha_i)\big(u\Delta(b_i(u))-\Delta(b_{i,0})\big)
\end{align*}
that follows from \eqref{hbcom1}, 
it is not hard to see that the induction step is reduced to proving
\beq\label{helper21}
\begin{split}
\Big[h_{i,1}\otimes 1&+ 1\otimes h_{i,1}+2\sum_{\alpha\in\Phi^+}(\alpha,\alpha_i)\,b_\alpha\otimes x_\alpha^+,b_i(u)\otimes \xi_i(-u)+1\otimes b_i(u)\Big]\\
\equiv &~2(\alpha_i,\alpha_i)\Big(u\big(b_i(u)\otimes \xi_i(-u)+1\otimes b_i(u)\big)-(b_{i,0}\otimes 1+1\otimes b_{i,0})\Big).
\end{split}
\eeq
We list all the terms from the LHS of \eqref{helper21} as follows:
\begin{align*}
&[h_{i,1}\otimes 1, b_i(u)\otimes \xi_i(-u)]\stackrel{\eqref{hbcom1}}{=}2(\alpha_i,\alpha_i)(ub_i(u)-b_{i,0})\otimes \xi_i(-u),\\
&[h_{i,1}\otimes 1, 1\otimes b_i(u)]=0,\qquad [1\otimes h_{i,1}, b_i(u)\otimes \xi_i(-u)]\stackrel{\eqref{hi1-embedding}}{\equiv}0,\\
&[1\otimes h_{i,1}, 1\otimes b_i(u)]\stackrel{\eqref{hbcom1}}{=}2(\alpha_i,\alpha_i) 1\otimes (ub_i(u)-b_{i,0}),\\
&[b_\alpha\otimes x_\alpha^+,b_i(u)\otimes \xi_i(-u)]\equiv 0,\\
&[b_\alpha\otimes x_\alpha^+,1\otimes b_i(u)]\stackrel{\eqref{eq:b-est}}{\equiv} 0,\qquad \text{ if }\alpha\ne \alpha_i,\\
&[b_{i,0}\otimes x_i^+,1\otimes b_i(u)]\overset{\eqref{eq:b-est}}{\underset{\eqref{xi+xi-}}{\equiv}}b_{i,0}\otimes (\xi_i(-u)-1).
\end{align*}
Summing them up (with suitable multiples), we easily deduce \eqref{helper21}.

\subsubsection{} Next we prove \eqref{eq:h-est}. Recall that $d_i=\tfrac12(\alpha_i,\alpha_i)$ and let $\equiv$ stand for equality modulo $\Y_{Q_+}[\![u^{-1}]\!]$. By \eqref{bi0biu}, we have
\begin{align*}
h_i(u) =\,\,\, & [b_{i,0},b_i(u)]+d_i(b_i(u))^2+1\\
\stackrel{\eqref{eq:b-est}}{\equiv}\, & \big[x_i^+-x_i^-,\tfrac12 \{x_i^+(u),\xi_i(-u)\}+x_i^-(-u)\big]+d_i\big(\tfrac12 \{x_i^+(u),\xi_i(-u)\}+x_i^-(-u)\big)^2+1\\
\equiv\,\,\, & [x_i^+,x_i^-(-u)]+\tfrac12\{[x_i^+(u),x_i^-],\xi_i(-u)\}+\tfrac12\{x_i^+(u),[\xi_i(-u),x_i^-]\}-[x_i^-,x_i^-(-u)]\\
&+d_i\big(\tfrac12\{x_i^+(u)\xi_i(-u),x_i^-(-u)\}+\tfrac12\{\xi_i(-u)x_i^+(u),x_i^-(-u)\}+(x_i^-(-u))^2\big)+1\\
\overset{\eqref{xi+xi-}}{\underset{\eqref{xixi-}}{=}}&\,\xi_i(-u)+\tfrac12\{\xi_i(u)-1,\xi_i(-u)\}+\tfrac12\{x_i^+(u),[\xi_i(-u),x_i^-]\}\\
&+d_i\big(\tfrac12\{x_i^+(u)\xi_i(-u),x_i^-(-u)\}+\tfrac12\{\xi_i(-u)x_i^+(u),x_i^-(-u)\}\big)\\
\stackrel{\eqref{xiuxi0}}{=}&\,\xi_i(u)\xi_i(-u)-\tfrac12d_i\{x_i^+(u),\xi_i(-u)x_i^-(-u)+x_i^-(-u)\xi_i(-u)\}\\
&+d_i\big(\tfrac12\{x_i^+(u)\xi_i(-u),x_i^-(-u)\}+\tfrac12\{\xi_i(-u)x_i^+(u),x_i^-(-u)\}\big)\\
=\hskip 0.18cm&\, \xi_i(u)\xi_i(-u)-\tfrac12d_i\big[[x_i^+(u),x_i^-(-u)],\xi_i(-u)\big]\overset{\eqref{eq:relHH}}{\underset{\eqref{eq:relXX}}{=}}\xi_i(u)\xi_i(-u),
\end{align*}
completing the proof of \eqref{eq:h-est}. 

\subsubsection{} 
Finally, we prove \eqref{eq:h-co-est}. We start with the following lemma. For $i\in \I$, set
\beq\label{Qi+}
Q_{i,+}:=\Big\{\sum_{j\in\I}k_j\alpha_j~\Big|~k_i>0,~k_j\in\bZ \text{ for } j\in \I \Big\},
\eeq
\be 
Q_{i,-}:=\Big\{\sum_{j\in\I}k_j\alpha_j~\Big|~k_i\lle 0,~k_j\in\bZ \text{ for } j\in \I \Big\}.
\ee
Then
\beq\label{dec-Q}
\Y=\Y_{Q_{i,+}}\oplus \Y_{Q_{i,-}}.
\eeq

\begin{lem}\label{lem:cap=0}
We have $\Yi\cap \Y_{Q_{i,+}}=\{0\}$ for all $i\in \I$.
\end{lem}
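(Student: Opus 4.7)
The plan is to pass to the associated graded algebras and exploit the Chevalley involution $\check\omega$. Suppose, toward a contradiction, that there exists a nonzero element $X \in \Yi \cap \Y_{Q_{i,+}}$. Let $r = \deg X$ with respect to the filtration on $\Y$ from Section~\ref{sec:pre}, which restricts to the corresponding filtration on $\Yi$. Then $\bar X \in \gr_r \Y$ is nonzero. Since this filtration is compatible with the $Q$-grading on $\Y$ (multiplication preserves both gradings), $\bar X$ is $Q$-weight homogeneous of weight $\mu \in Q_{i,+}$, and in particular $\mu\neq 0$.

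By the proof of Theorem~\ref{thm:embedmaintext}(1), the associated graded homomorphism $\gr \varphi$ is identified, via the isomorphisms $\varrho$ of \eqref{assi} and $\rho$ of \eqref{ass}, with the natural inclusion $\mathrm{U}(\g[z]^{\check\omega}) \hookrightarrow \mathrm{U}(\g[z])$. Thus $\bar X$ lies in the subalgebra $\mathrm{U}(\g[z]^{\check\omega})$ of $\mathrm{U}(\g[z])$ while simultaneously being $Q$-homogeneous of nonzero weight $\mu$. The task therefore reduces to showing $\mathrm{U}(\g[z]^{\check\omega})\cap \mathrm{U}(\g[z])_{\mu}=0$ for every $\mu\neq 0$.

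For this, I extend $\check\omega$ from $\g[z]$ to an algebra involution of $\mathrm{U}(\g[z])$. Since it fixes each element of $\g[z]^{\check\omega}$, it fixes every element of the subalgebra $\mathrm{U}(\g[z]^{\check\omega})$ pointwise; in particular $\check\omega(\bar X) = \bar X$. On the other hand, because $\omega$ sends each root space $\g_\alpha$ to $\g_{-\alpha}$ and negates the Cartan, its extension $\check\omega$ negates the $Q$-weight of any weight-homogeneous element of $\mathrm{U}(\g[z])$. Hence $\check\omega(\bar X)\in \mathrm{U}(\g[z])_{-\mu}$, so $\bar X$ lies in both weight spaces $\mathrm{U}(\g[z])_{\mu}$ and $\mathrm{U}(\g[z])_{-\mu}$. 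As $\mu \neq 0$ these are distinct $Q$-homogeneous components and intersect trivially, forcing $\bar X = 0$, a contradiction. Therefore $X=0$.

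There is no serious obstacle: the key conceptual ingredient is simply that the involution $\check\omega$ negates weights while pointwise fixing the subalgebra generated by its fixed points, which is a general fact. The only technical input is the explicit identification of $\gr\varphi$ with the inclusion of twisted current enveloping algebras, and this has already been established in the proof of Theorem~\ref{thm:embedmaintext}(1) by comparing the top-degree parts of the formulas \eqref{hi1-embedding}--\eqref{bi1-embedding} against \eqref{ass} and \eqref{assi}.
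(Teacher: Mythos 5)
Your argument is correct and takes essentially the same route as the paper: pass to the associated graded, identify $\gr\,\varphi$ with the inclusion $\mathrm{U}(\g[z]^{\check\omega})\hookrightarrow\mathrm{U}(\g[z])$, and observe that fixedness under (the enveloping-algebra extension of) $\check\omega$ forces the $Q$-weight support of the top symbol to be stable under negation, which is incompatible with lying in $Q_{i,+}$ — this is precisely the paper's remark that whenever a weight $\alpha$ occurs in the top degree component of an element of $\Yi$, so does $-\alpha$. One minor correction: an element of $\Y_{Q_{i,+}}$ need not be $Q$-homogeneous, so $\bar X$ may be a sum of components of several weights in $Q_{i,+}$ rather than homogeneous of a single weight $\mu$; the argument goes through unchanged, since the nonempty weight support of $\bar X$ would have to be negation-stable while contained in $Q_{i,+}$, which is disjoint from $-Q_{i,+}$.
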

\begin{proof}
Recall from \S\ref{sec:pfthmB} that the filtration on $\Yi$ coincides with the filtration induced from that on $\Y$ if we regard $\Yi$ as a subalgebra of $\Y$. Also note that the algebra $\mathrm{U}(\g[z])$ is $Q$-graded in the obvious way and the associated graded map preserves the $Q$-grading, see \eqref{ass}. By passing to the associated graded, it suffices to show that $\mathrm{U}(\g[z]^{\check{\omega}})\cap \mathrm{U}(\g[z])_{Q_{i,+}}=\{0\}$, where $\mathrm{U}(\g[z])_{Q_{i,+}}$ is the associated graded image of $\Y_{Q_{i,+}}$. Note that $\mathrm{U}(\g[z])_{Q_{i,+}}$ can also be described explicitly as the definition of $\Y_{Q_{i,+}}$ from \S\ref{sec:Yang} by setting $\deg \xi_iz^r=0$ and $\deg x_{i}^\pm z^r=\pm \alpha_i$. We shall show that if $\mathcal E\in \mathrm{U}(\g[z]^{\check{\omega}})$ is nonzero, then $\mathcal E$ does not belong to $\mathrm{U}(\g[z])_{Q_{i,+}}$. 

Fix an arbitrary total order $\prec$ on $\I\times\bN$ and also an arbitrary total order $\prec$ on the set of positive roots $\Phi^+$. Lexicographically extend the total order $\prec$ on $\Phi^+$ and the natural order on $\bN$ to the set $\Phi^+\times\bN$, which we denote by $\prec$ again. 

Let $\bar h_{i,2r+1}:=2\xi_{i}z^{2r+1}$ and $\bar b_{\alpha,r}:=(x_\alpha^+-(-1)^rx_\alpha^-)z^r$ for $i\in \I$, $r\in \bN$, and $\alpha\in \Phi^+$. By the PBW theorem, the nonzero element $\mathcal E\in \mathrm{U}(\g[z]^{\check{\omega}})$ can be written uniquely as a finite sum of the following form:
\beq\label{helper97}
\sum_{\bm m,\bm n}k_{\bm m,\bm n}\prod_{(i,r)\in \I\times\bN}\bar h_{i,2r+1}^{m_{i,r}}\prod_{(\alpha,s)\in \Phi^+\times\bN}\bar b_{\alpha,s}^{n_{\alpha,s}},
\eeq
summed over $\bm m =(m_{i,r})_{(i,r)\in \I\times\bN}$ and $\bm n=(n_{\alpha,s})_{(\alpha,s)\in \Phi^+\times\bN}$ that are tuples of natural numbers with only finitely many nonzero numbers. Here $k_{\bm m,\bm n}$ are complex numbers and the orders of the products in \eqref{helper97} are taken with respect to the total order $\prec$ we choose on the index sets.

Extend the total order $\prec$ on $\Phi^+\times \bN$ to the set of root vectors $\bar x_{\alpha,r}^\pm:=x_\alpha^\pm z^r $ of $\g[z]$ (or equivalently the set $\Phi\times \bN$) by the rule: 
\[
\text{for } \alpha,\beta\in \Phi^+ \text{ and } r,s\in \bN,\text{ if }(\alpha,r)\prec(\beta,s), \text{ then }\bar x_{\alpha,r}^-\prec\bar x_{\alpha,r}^+\prec\bar x_{\beta,s}^-\prec\bar x_{\beta,s}^+.
\]

Expand all $\bar b_{\alpha,s}^{n_{\alpha,s}}$ from a monomial in \eqref{helper97} in terms of  $\bar x_{\alpha,r}^\pm$ and rewrite the result in terms of ordered monomials in $\bar\xi_{i,r}:=\xi_iz^r$ and $\bar x_{\alpha,r}^\pm$, where we always put $\bar\xi_{i,r}$ to the left of $\bar x_{\alpha,r}^\pm$. Then such a monomial in \eqref{helper97} can be written as
\begin{align*}
&k_{\bm m,\bm n}\prod_{(i,r)\in \I\times\bN}\bar h_{i,2r+1}^{m_{i,r}}\prod_{(\alpha,s)\in \Phi^+\times\bN}(\bar x_{\alpha,s}^+)^{n_{\alpha,s}}\\+\,&(-1)^*k_{\bm m,\bm n}\prod_{(i,r)\in \I\times\bN}\bar h_{i,2r+1}^{m_{i,r}}\prod_{(\alpha,s)\in \Phi^+\times\bN}(\bar x_{\alpha,s}^-)^{n_{\alpha,s}}+\cdots,
\end{align*}
where $*$ is a certain integer and $\cdots$ stands for a certain sum of ordered monomials whose weights are strictly less than $\alpha$ and greater than $-\alpha$ (by that we mean the order $>$ defined on $Q$ by the rule: $\alpha>\beta$ if $\alpha-\beta\in Q_+$).

Pick a pair $(\bm m',\bm n')$ such that $\sum n'_{\alpha,s}\alpha$ summed over $(\alpha,s)\in \Phi^+\times\bN$ is maximal with respect to the order $>$ among all $(\bm m,\bm n)$ such that  $k_{\bm m,\bm n}\ne 0$ in \eqref{helper97}. Then the ordered monomial 
\[
(-1)^*k_{\bm m',\bm n'}\prod_{(i,r)\in \I\times\bN}\bar h_{i,2r+1}^{m'_{i,r}}\prod_{(\alpha,s)\in \Phi^+\times\bN}(\bar x_{\alpha,s}^-)^{n'_{\alpha,s}}
\]
cannot be canceled by other ordered monomials after rewriting \eqref{helper97}. Therefore, $\mathcal E$ does not belong to $\mathrm{U}(\g[z])_{Q_{i,+}}$, completing the proof.
\end{proof}

Now we are ready to prove \eqref{eq:h-co-est}. It follows from \eqref{eq:h-est} that $h_i(u)=\xi_i(u)\xi_i(-u)+\theta(u)$, where $\theta(u)\in \Y_{Q_+}[\![u^{-1}]\!]$. It follows from Lemma \ref{lem:copro} that $\Delta(\xi_i(u))=\xi_i(u)\otimes \xi_i(u)+Z(u)$ where $Z(u)\in \Y_{Q_-}\otimes\Y_{Q_+}[\![u^{-1}]\!]$. Therefore, we have
\begin{align*}
\Delta(h_i(u))&=\big(\xi_i(u)\otimes \xi_i(u)+Z(u)\big)\big(\xi_i(-u)\otimes \xi_i(-u)+Z(-u)\big)+\Delta(\theta(u))\\
&=\xi_i(u)\xi_i(-u)\otimes \xi_i(u)\xi_i(-u)+\big(\xi_i(u)\otimes \xi_i(u)\big)Z(-u)\\&~~~\,+Z(u)\big(\xi_i(-u)\otimes \xi_i(-u)\big)+Z(u)Z(-u)+\Delta(\theta(u))\\
&=h_i(u)\otimes \xi_i(u)\xi_i(-u)-\theta(u)\otimes \xi_i(u)\xi_i(-u)+\big(\xi_i(u)\otimes \xi_i(u)\big)Z(-u)\\&~~~\,+Z(u)\big(\xi_i(-u)\otimes \xi_i(-u)\big)+Z(u)Z(-u)+\Delta(\theta(u)).
\end{align*}
Denote $\Theta(u)=\Delta(h_i(u))-h_i(u)\otimes \xi_i(u)\xi_i(-u)$. Then 
\begin{align*}
\Theta(u)&=\Delta(\theta(u))-\theta(u)\otimes \xi_i(u)\xi_i(-u)\\&~~~\,+\big(\xi_i(u)\otimes \xi_i(u)\big)Z(-u)+Z(u)\big(\xi_i(-u)\otimes \xi_i(-u)\big)+Z(u)Z(-u).
\end{align*}
Since $\Yi$ is a right coideal subalgebra of $\Y$, we have $\Theta(u)\in \Yi\otimes \Y[\![u^{-1}]\!]$.
Given any $j\in\I$, by \eqref{dec-Q} we can write 
\[
\Theta(u)=\Theta_{j,+}(u)+\Theta_{j,-}(u),\qquad \Theta_{j,\pm}(u)\in \Yi\otimes\Y_{Q_{j,\pm}}[\![u^{-1}]\!].
\]
Clearly, only $\Delta(\theta(u))-\theta(u)\otimes \xi_i(u)\xi_i(-u)$ contributes to $\Theta_{j,-}(u)$. Since $\theta(u)\in \Y_{Q_+}[\![u^{-1}]\!]$, we conclude that $\Theta_{j,-}(u)\in \Y_{Q_{j,+}}\otimes\Y_{Q_{j,-}}[\![u^{-1}]\!]$. Thus $\Theta_{j,-}(u)\in \big(\Yi\cap\Y_{Q_{j,+}}\big)\otimes\Y_{Q_{j,-}}[\![u^{-1}]\!]$. It follows from Lemma \ref{lem:cap=0} that $\Theta_{j,-}(u)=0$. Therefore $\Theta(u)=\Theta_{j,+}(u)\in \Yi\otimes\Y_{Q_{j,+}}[\![u^{-1}]\!]$. Since $j\in \I$ is arbitrary, we further have  $\Theta(u)\in \Yi\otimes\Y_{Q_{+}}[\![u^{-1}]\!]$, completing the proof of \eqref{eq:h-co-est}.

\appendix
\section{R-matrix presentation}\label{sec:app+}
\subsection{Type $\mathsf A_1$}\label{sec:app}
We prove Part (1) of Theorem \ref{thm:embedmaintext} (Proposition \ref{prop-sl2-2}) and Theorem \ref{thm:hb} (Proposition \ref{prop-sl2}) for the rank 1 case $\g=\mathfrak{sl}_2$. Our approach also works for the affine $\imath$quantum group case with the help of the isomorphism between affine $\imath$quantum group of split type $\mathsf A$ in R-matrix and Drinfeld presentations established in \cite{Lu24}. This approach would greatly simplify the proof of \cite[Theorem~7.5]{Prz23} and gives a stronger form.

We first recall Yangians and twisted Yangians in R-matrix presentation (we shall only use relations in generating series form) from \cite{MNO96,Mo07}.

The \textit{Yangian} $\Y_{\mathscr R}$ corresponding to the Lie algebra $\gl_2$ is a unital associative algebra with generators $t_{ij}^{(r)}$, where $1\lle i,j\lle 2$ and $r\in\bZ_{>0}$, and the defining relations 
\beq\label{RTT}
(u-v)[t_{ij}(u),t_{kl}(v)]=t_{kj}(u)t_{il}(v)-t_{kj}(v)t_{il}(u).
\eeq
Here we have used the generating series in an indeterminate $u$
\[
t_{ij}(u)=  \delta_{ij}+t_{ij}^{(1)}u^{-1}+t_{ij}^{(2)}u^{-2}+\cdots.
\]
Introduce new generating series via Gauss decomposition, 
\begin{align*}
t_{11}(u)&=D_1(u), &t_{22}(u)=D_2(u)+F(u)D_1(u)E(u),\\
t_{12}(u)&=D_1(u)E(u),& t_{21}(u)=F(u)D_1(u).
\end{align*}

It has been established in \cite{BK05} that the Yangian $\Y(\mathfrak{sl}_2)$ in Drinfeld presentation can be identified as a subalgebra of $\Y_{\mathscr R}$ via the following correspondence:
\beq\label{eq:Y2-R}
x_1^+(u)=F(u-\tfrac{1}{2}),\quad x_1^-(u)=E(u-\tfrac{1}{2}),\quad
\xi_1(u)= D_{1}(u-\tfrac{1}{2})^{-1}D_{2}(u-\tfrac{1}{2}). 
\eeq

The \textit{quantum} determinant $\mathrm{qdet}\,T(u)$ of the Yangian $\Y_{\mathscr R}$ is defined by the first equality which also satisfies the second equality,
\beq\label{eq:qdet}
\mathrm{qdet}\,T(u)=t_{11}(u)t_{22}(u-1)-t_{21}(u)t_{12}(u-1)=D_1(u)D_2(u-1).
\eeq
Moreover, the coefficients of $\mathrm{qdet}\,T(u)$ as a series in $u^{-1}$ are free generators of the center of $\Y_{\mathscr R}$.

The coproduct of $\Y_{\mathscr R}$ is given by 
\[
\Delta(t_{ij}(u))=t_{i1}(u)\otimes t_{1j}(u)+t_{i2}(u)\otimes t_{2j}(u),\qquad i,j=1,2.
\]
This is compatible with the coproduct \eqref{copro-efh}--\eqref{copro-xi-} in Drinfeld presentation under the identification \eqref{eq:Y2-R}. In addition, the quantum determinant is a group-like element,
\beq\label{eq:copro-R1}
\Delta(\mathrm{qdet}\,T(u))=\mathrm{qdet}\,T(u)\otimes \mathrm{qdet}\,T(u).
\eeq

For $1\lle i,j\lle 2$, we introduce the generating series 
\beq\label{siju}
s_{ij}(u)=  \delta_{ij}+s_{ij}^{(1)}u^{-1}+s_{ij}^{(2)}u^{-2}+\cdots.
\eeq
The \textit{twisted Yangian} $\Yi_{\mathscr R}$ corresponding to the Lie algebra $\mathfrak o_2$ is the unital associative algebra  with generators $s_{ij}^{(r)}$, where $1\lle i,j\lle 2$ and $r\in\bZ_{>0}$, whose generating series \eqref{siju} satisfy the following quaternary relation:
\beq\label{quater u}
\begin{split}
(u^2-v^2)[s_{ij}(u),s_{kl}(v)]=&\, (u+v)(s_{kj}(u)s_{il}(v)-s_{kj}(v)s_{il}(u))\\
- &\, (u-v)(s_{ik}(u)s_{jl}(v)-s_{ki}(v)s_{lj}(u))\\
&\, \qquad \ \  +  s_{ki}(u)s_{jl}(v)-s_{ki}(v)s_{jl}(u).
\end{split}
\eeq
and the symmetry relations
\beq\label{sym u}
s_{ji}(-u)=  s_{ij}(u)+\frac{s_{ij}(u)-s_{ij}(-u)}{2u}.
\eeq

Introduce new generating series via Gauss decomposition, 
\begin{align*}
s_{11}(u)&=d_1(u), &s_{22}(u)=d_2(u)+f(u)d_1(u)e(u),\\
s_{12}(u)&=d_1(u)e(u),&
s_{21}(u)=f(u)d_1(u).
\end{align*}
It is shown in \cite[Lemma~4.1]{LWZ25GD} that $f(u-\tfrac{1}{2})=e(-u-\tfrac{1}{2})$. It has been established in \cite{LWZ25GD} that the twisted Yangian $\Yi(\mathfrak{sl}_2)$ in Drinfeld presentation can be identified as a subalgebra of $\Yi_{\mathscr R}$ via the following correspondence:
\beq\label{eq:Y2i-R}
b_1(u)=f(u-\tfrac{1}{2}),\qquad 
h_1(u)= d_{1}(u-\tfrac{1}{2})^{-1}d_{2}(u-\tfrac{1}{2}). 
\eeq

The twisted Yangian $\Yi_{\mathscr R}$ can be identified as a subalgebra of $\Y_{\mathscr R}$ via
\beq\label{eq:embed-R}
s_{ij}(u)\mapsto t_{1i}(-u)t_{1j}(u)+t_{2i}(-u)t_{2j}(u).
\eeq
This is the version used in \cite[\S2.2]{LPTTW25}, which is different from the one in \cite{Mo07,LWZ25GD}, in order to make it a \textit{right} coideal subalgebra. Specifically, as a subalgebra of $\Y_{\mathscr R}$, we have
\beq\label{s-copro}
\Delta(s_{ij}(u))=\sum_{a,b=1}^2 s_{ab}(u)\otimes t_{ai}(-u)t_{bj}(u).
\eeq

The \textit{Sklyanin} determinant $\mathrm{sdet}\,S(u)$ of the twisted Yangian $\Yi_{\mathscr R}$ is defined by the first equality which also satisfies the second equality,
\beq\label{eq:sdet}
\mathrm{sdet}\,S(u)=s_{11}(-u)s_{22}(u-1)-s_{12}(-u)s_{12}(u-1)=d_1(u)d_2(u-1).
\eeq
Moreover, $\mathrm{sdet}\,S(u+\hf)$ is an even series in $u^{-1}$ and its coefficients are free generators of the center of $\Yi_{\mathscr R}$. Considering $\Yi_{\mathscr R}$ as a subalgebra of $\Y_{\mathscr R}$ via \eqref{eq:embed-R}, the Sklyanin determinant satisfies
\beq\label{eq:s=qdet}
\mathrm{sdet}\,S(u)=\mathrm{qdet}\,T(u)\mathrm{qdet}\,T(-u+1)
\eeq
and hence by \eqref{eq:copro-R1} is a group-like element as well,
\beq\label{eq:copro-R2}
\Delta(\mathrm{sdet}\,S(u))=\mathrm{sdet}\,S(u)\otimes \mathrm{sdet}\,S(u).
\eeq

\begin{prop}\label{prop-sl2}
Conjecture \ref{conj} holds for the case $\g=\mathfrak{sl}_2$.
\end{prop}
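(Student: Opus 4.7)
The plan is to work in the R-matrix presentation of $\Y_{\mathscr R}\supset \Yi_{\mathscr R}$ and exploit the identifications \eqref{eq:Y2-R}, \eqref{eq:Y2i-R}, together with the Sklyanin--quantum determinant identity \eqref{eq:s=qdet} and the group-like property \eqref{eq:copro-R2} of $\mathrm{sdet}\,S(u)$. The crucial observation is that under \eqref{eq:Y2-R} the off-diagonal Gauss series $F(u)$ corresponds to $x_1^+$, so each coefficient of $F(u)$ is a $\bC$-linear combination of the generators $x_{1,s}^+$ and hence lies in $\Y^{\gge 0}_{\alpha_1}$, while the diagonal Gauss series $D_1(u),D_2(u)$ all mutually commute.

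First I would establish the key congruence
\[
d_1(u) \equiv D_1(-u)D_1(u) \pmod{\Y^{\gge 0}_{Q_+}[\![u^{-1}]\!]}.
\]
This is immediate from $d_1(u)=s_{11}(u)=t_{11}(-u)t_{11}(u)+t_{21}(-u)t_{21}(u)$ via \eqref{eq:embed-R} and the Gauss decomposition $t_{11}=D_1$, $t_{21}=FD_1$, since the second summand carries two $F$-factors and therefore lies in $\Y^{\gge 0}_{2\alpha_1}\subset \Y^{\gge 0}_{Q_+}$. Inversion preserves the congruence because the error has positive weight. Shifting $u\to u+\tfrac12$ in \eqref{eq:sdet}--\eqref{eq:s=qdet} gives $d_1(u+\tfrac12)d_2(u-\tfrac12)=\mathrm{qdet}\,T(u+\tfrac12)\mathrm{qdet}\,T(-u+\tfrac12)$, whence
\[
h_1(u)=d_1(u-\tfrac12)^{-1}d_1(u+\tfrac12)^{-1}\mathrm{qdet}\,T(u+\tfrac12)\mathrm{qdet}\,T(-u+\tfrac12).
\]
Substituting the congruence for $d_1$ in both factors and using $\mathrm{qdet}\,T(v)=D_1(v)D_2(v-1)$ together with the mutual commutativity of the $D$'s, the two factors $D_1(u+\tfrac12)$ and $D_1(-u+\tfrac12)$ cancel, leaving $D_1(u-\tfrac12)^{-1}D_2(u-\tfrac12)\cdot D_1(-u-\tfrac12)^{-1}D_2(-u-\tfrac12)=\xi_1(u)\xi_1(-u)$, which is exactly the first congruence of Conjecture~\ref{conj}.

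For the coproduct, applying \eqref{s-copro} to $s_{11}$ produces four terms; the three involving $s_{12}$, $s_{21}$, or $s_{22}$ all contain at least one factor $t_{21}$ in the second tensor component, which has coefficients in $\Y^{\gge 0}_{\alpha_1}\subset \Y^{\gge 0}_{Q_+}$. Hence
\[
\Delta(d_1(u))\equiv d_1(u)\otimes D_1(-u)D_1(u)\pmod{\Yi\otimes \Y^{\gge 0}_{Q_+}[\![u^{-1}]\!]},
\]
and the same holds for $\Delta(d_1(u))^{-1}$. Applying $\Delta$ to $h_1(u)=d_1(u-\tfrac12)^{-1}d_1(u+\tfrac12)^{-1}\mathrm{sdet}\,S(u+\tfrac12)$ and combining these estimates with the group-like identity \eqref{eq:copro-R2}, the second tensor factor collapses by the exact same cancellation as in the previous paragraph to $\xi_1(u)\xi_1(-u)$, yielding $\Delta(h_1(u))\equiv h_1(u)\otimes \xi_1(u)\xi_1(-u)$ modulo $\Yi\otimes\Y^{\gge 0}_{Q_+}[\![u^{-1}]\!]$.

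The main technical care required throughout is the bookkeeping of congruences under inversion and multiplication within an infinite-dimensional completion, compounded by the fact that the individual $D_i(u)$ and the off-diagonal Gauss factors live in $\Y_{\mathscr R}\cong\Y(\gl_2)$ rather than in $\Y=\Y(\mathfrak{sl}_2)$. However, since both sides of every final congruence already lie in $\Y$, and the error ideal $\Y^{\gge 0}_{Q_+}$ is defined purely by weight, the intermediate detours through $\Y(\gl_2)$ are harmless and all calculations close up inside the desired quotient.
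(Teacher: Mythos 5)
Your proposal is correct and follows essentially the same route as the paper's proof: both work in the R-matrix presentation modulo the (positive-weight) ideal of the Borel $\Y_{\mathscr R}^{\gge 0}$ generated by the coefficients of $F(u)$, establish $d_1(u)\equiv D_1(-u)D_1(u)$ from \eqref{eq:embed-R}, eliminate $d_2$ via $\mathrm{sdet}\,S(u)=\mathrm{qdet}\,T(u)\,\mathrm{qdet}\,T(-u+1)$ together with $\mathrm{qdet}\,T(u)=D_1(u)D_2(u-1)$, and treat the coproduct using \eqref{s-copro} and the group-likeness of the quantum/Sklyanin determinants. Your only deviation --- substituting $\mathrm{sdet}\,S(u+\hf)$ directly into $h_1(u)=d_1(u-\hf)^{-1}d_2(u-\hf)$ rather than first recording the congruence \eqref{d2equiv} for $d_2(u)$ --- is cosmetic, and your closing caveat about passing between the $\gl_2$- and $\mathfrak{sl}_2$-pictures is handled at the same implicit level of care as in the paper.
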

\begin{proof}
Let $\Y_{\mathscr R}^{\gge 0}$ be the subalgebra generated by the coefficients of $D_1(u),D_2(u),F(u)$ and $\mathscr I_{\mathscr R}^{\gge 0}$ be the two-sided ideal of $\Y_{\mathscr R}^{\gge 0}$ generated by coefficients of $F(u)$. The notion $\equiv$ below stands for equality modulo the ideal $\mathscr I_{\mathscr R}^{\gge 0}[\![u^{-1}]\!]$ in $\Y_{\mathscr R}^{\gge 0}[\![u^{-1}]\!]$.

Note that by \eqref{eq:embed-R}, we have
\begin{align*}
d_1(u)&=s_{11}(u)=t_{11}(-u)t_{11}(u)+t_{21}(-u)t_{21}(u)\\
&=D_1(-u)D_1(u)+F(-u)D_1(-u)F(u)D_1(u)\equiv D_1(-u)D_1(u).
\end{align*}
It follows that
\beq\label{d1equiv}
d_1(u)^{-1} \equiv D_1(-u)^{-1} D_1(u)^{-1}.
\eeq
By \eqref{eq:qdet}, \eqref{eq:sdet}, and \eqref{eq:s=qdet}, we have
\[
d_1(u)d_2(u-1)=D_1(u)D_2(u-1)D_1(-u+1)D_2(-u).
\]
Combining it with \eqref{d1equiv}, we find that
\beq\label{d2equiv}
d_2(u)\equiv D_1(-u-1)^{-1}D_1(-u)D_2(-u-1)D_2(u).
\eeq
Thus by \eqref{d1equiv}--\eqref{d2equiv}, we have
\begin{align*}
h_1(u)& \stackrel{\eqref{eq:Y2i-R}}{=}d_1(u-\hf)^{-1}d_2(u-\hf)\\
&\overset{\eqref{d1equiv}}{\underset{\eqref{d2equiv}}{\equiv}} D_1(-u-\hf)^{-1}D_1(u-\hf)^{-1}D_2(-u-\hf)D_2(u-\hf)\stackrel{\eqref{eq:Y2-R}}{=}\xi_1(u)\xi_1(-u),
\end{align*}
completing the proof of the first equality of Conjecture \ref{conj}.

Then we proceed to verify the coproduct formula. The operation $\equiv$ below stands for equality modulo the ideal $\Yi_{\mathscr R}\otimes \mathscr I_{\mathscr R}^{\gge 0}[\![u^{-1}]\!]$ in $\Yi_{\mathscr R}\otimes \Y_{\mathscr R}^{\gge 0}[\![u^{-1}]\!]$. By \eqref{s-copro}, we have
\begin{align*}
\Delta(d_1(u))&=\Delta(s_{11}(u))=\sum_{a,b=1}^2 s_{ab}(u)\otimes t_{a1}(-u)t_{b1}(u)\\
&\equiv s_{11}(u)\otimes t_{11}(-u)t_{11}(u) = d_1(u)\otimes D_1(-u)D_1(u).
\end{align*}
It follows that
\beq\label{d1equivco}
\Delta(d_1(u)^{-1}) \equiv d_1(u)^{-1}\otimes D_1(-u)^{-1} D_1(u)^{-1}.
\eeq
By \eqref{eq:qdet}--\eqref{eq:copro-R1} and \eqref{eq:sdet}--\eqref{eq:copro-R2}, we have
\begin{align*}
\Delta(d_1(u)d_2(u-1))=d_1(u)d_2(u-1)\otimes D_1(u)D_2(u-1)D_1(-u+1)D_2(-u).
\end{align*}
Combining it with \eqref{d1equivco}, we obtain
\beq
\Delta(d_2(u))\equiv d_2(u)\otimes D_1(-u-1)^{-1}D_1(-u)D_2(-u-1)D_2(u).
\eeq
Then a similar calculation as in the first part shows that $\Delta(h_1(u))\equiv h_1(u)\otimes \xi_1(u)\xi_1(-u)$, completing the proof of the second equality of Conjecture \ref{conj}.
\end{proof}

Finally, we establish Part (1) of Theorem \ref{thm:embedmaintext} for type $\mathsf A_1$. For that purpose, we need the coefficients of the generating series. For any series $\mathsf X(u)$ in $u^{-1}$ from $\Y_{\mathscr R}$ and $\Yi_{\mathscr R}$ in R-matrix presentation and their Gauss decomposition, denote by $\mathsf X^{(r)}$ the coefficients of $u^{-r}$ for $r\in \bN$.

Since it is already known in \cite{LWZ25GD} that $\Yi\subset  \Yi_{\mathscr R}$ is an coideal subalgebra of $\Y$, it suffices to match the generators as in \eqref{hi1-embedding}--\eqref{bi0-embedding}.

\begin{prop}\label{prop-sl2-2}
Under the identification described above, we have 
\[
b_{1,0}=x_{1,0}^+-x_{1,0}^-,\qquad h_{1,1}=2\xi_{1,1}-\xi_{1,0}^2+2(x_{1,0}^+)^2.
\]
\end{prop}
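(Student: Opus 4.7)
The plan is to compute $b_{1,0}$ and $h_{1,1}$ directly from the Gauss decomposition of $\Yi_{\mathscr R}$ under the embedding \eqref{eq:embed-R}, and then match the results with Drinfeld generators of $\Y$ via \eqref{eq:Y2-R}. As a preparatory step, I would use the symmetry relation \eqref{sym u} with $i=j$ to deduce that $s_{11}(u)$ and $s_{22}(u)$ are even series in $u^{-1}$; this forces $d_1^{(1)} = 0$, and since $s_{22}(u) = d_2(u) + f(u) d_1(u) e(u)$ has its correction term starting at order $u^{-2}$, it also yields $d_2^{(1)} = 0$.

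For the claim on $b_{1,0}$: the Gauss decomposition gives $f(u) = s_{21}(u) d_1(u)^{-1}$, and $d_1^{(0)} = 1$ implies $f^{(1)} = s_{21}^{(1)}$. Under \eqref{eq:embed-R}, $s_{21}(u) = t_{12}(-u) t_{11}(u) + t_{22}(-u) t_{21}(u)$; reading off the $u^{-1}$ coefficient via the Gauss decompositions of the $t_{ij}(u)$ yields $s_{21}^{(1)} = F^{(1)} - E^{(1)}$. Since $b_1(u) = f(u - \tfrac12)$, the leading coefficient is $b_{1,0} = f^{(1)}$, and \eqref{eq:Y2-R} identifies $F^{(1)} = x_{1,0}^+$, $E^{(1)} = x_{1,0}^-$, giving the desired formula.

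For $h_{1,1}$: since $d_1^{(1)} = d_2^{(1)} = 0$, the $u^{-2}$ coefficient of $h_1(u) = d_1(u-\tfrac12)^{-1} d_2(u-\tfrac12)$ collapses cleanly to $h_{1,1} = d_2^{(2)} - d_1^{(2)}$. A direct (noncommutative) expansion of the embedding together with the Gauss decomposition yields
\begin{align*}
d_1^{(2)} &= 2 D_1^{(2)} - (D_1^{(1)})^2 - (F^{(1)})^2,\\
d_2^{(2)} &= 2 D_2^{(2)} - (D_2^{(1)})^2 + [F^{(1)}, E^{(1)}] + (F^{(1)})^2,
\end{align*}
where the second formula uses $d_2(u) = s_{22}(u) - f(u) d_1(u) e(u)$, the symmetric Gauss computation $e^{(1)} = E^{(1)} - F^{(1)}$, and $f^{(1)} e^{(1)} = -(F^{(1)} - E^{(1)})^2$.

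To complete the match, I would expand $\xi_1(u) = D_1(u-\tfrac12)^{-1} D_2(u-\tfrac12)$ to obtain $\xi_{1,0} = D_2^{(1)} - D_1^{(1)}$ and $\xi_{1,1} = (D_2^{(2)} - D_1^{(2)}) + \tfrac12 \xi_{1,0} - D_1^{(1)} \xi_{1,0}$, and then use $[F^{(1)}, E^{(1)}] = \xi_{1,0}$ from \eqref{eq:relXX}. The identity $h_{1,1} = 2 \xi_{1,1} - \xi_{1,0}^2 + 2(x_{1,0}^+)^2$ then reduces to the single commutation $[D_1^{(1)}, D_2^{(1)}] = 0$, which I would obtain by extracting the $u^0 v^{-1}$ coefficient of the RTT relation $(u-v)[t_{11}(u), t_{22}(v)] = t_{21}(u) t_{12}(v) - t_{21}(v) t_{12}(u)$, whose right-hand side has no $u^0$ contribution since $t_{21}(u), t_{12}(u)$ have no constant term. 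The main obstacle is purely bookkeeping: carefully tracking noncommutativity and signs across each Gauss coefficient, and keeping the shifts $u \mapsto u - \tfrac12$ consistent on both sides.
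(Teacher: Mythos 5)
Your proposal is correct and follows essentially the same route as the paper: identify $b_{1,0}=f^{(1)}=s_{21}^{(1)}$ and $h_{1,1}=d_2^{(2)}-d_1^{(2)}$ via the Gauss decompositions, expand the embedding \eqref{eq:embed-R} to express $d_1^{(2)},d_2^{(2)}$ through $D_1,D_2,E,F$, and match with $\xi_{1,0},\xi_{1,1}$ using $[F^{(1)},E^{(1)}]=\xi_{1,0}$ and $[D_1^{(1)},D_2^{(1)}]=0$. The only (harmless) deviations are cosmetic: you obtain $d_1^{(1)}=d_2^{(1)}=0$ from the symmetry relation \eqref{sym u} and $e^{(1)}=E^{(1)}-F^{(1)}$ directly from the embedding, whereas the paper reads these off from the embedding computation and from $e^{(1)}=-f^{(1)}$ via \eqref{sym u}, respectively.
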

\begin{proof}
We have
\beq\label{b10app}
b_{1,0}\stackrel{\eqref{eq:Y2i-R}}{=} f^{(1)}=s_{21}^{(1)}\stackrel{\eqref{eq:embed-R}}{=}t_{21}^{(1)}-t_{12}^{(1)}=F^{(1)}-E^{(1)}\stackrel{\eqref{eq:Y2-R}}{=}x_{1,0}^+-x_{1,0}^-,
\eeq
proving the first equality.

It follows from \eqref{eq:Y2-R} that
\beq\label{app1}
\begin{split}
&\xi_{1,0}=D_2^{(1)}-D_1^{(1)},\\
&\xi_{1,1}=D_2^{(2)}-D_1^{(2)}+(D_1^{(1)})^2+\hf D_2^{(1)}-\hf D_1^{(1)}-D_1^{(1)}D_2^{(1)}.
\end{split}
\eeq
Using \eqref{eq:embed-R} and expressing $t_{ij}(u)$ and $s_{ij}(u)$ in terms of Gaussian generators, we have
\beq\label{d12app}
s_{11}^{(1)}=d_1^{(1)}=0,\quad s_{11}^{(2)}=d_1^{(2)}=2D_1^{(2)}-(D_1^{(1)})^2-(F^{(1)})^2.
\eeq
Similarly, we have
\be
\begin{split}
&s_{22}^{(1)}=d_2^{(1)}=0,\\
&s_{22}^{(2)}=d_2^{(2)}+f^{(1)}e^{(1)}=-(E^{(1)})^2+2D_2^{(2)}-(D_2^{(1)})^2+2F^{(1)}E^{(1)}.
\end{split}
\ee
Note that by \eqref{sym u}, we have $e^{(1)}=s_{12}^{(1)}=-s_{21}^{(1)}=-f^{(1)}$. It follows from \eqref{b10app} and the above equation that
\beq\label{d22app}
d_2^{(2)}=2D_2^{(2)}-(D_2^{(1)})^2-[E^{(1)},F^{(1)}]+(F^{(1)})^2.
\eeq
Because $d_1^{(1)}=d_2^{(1)}=0$, we find from \eqref{eq:Y2i-R} that
\beq\label{app3}
\begin{split}
h_{1,1}&=d_2^{(2)}-d_1^{(2)}\\
&\overset{\eqref{d12app}}{\underset{\eqref{d22app}}{=}}2D_2^{(2)}-(D_2^{(1)})^2-2D_1^{(2)}+(D_1^{(1)})^2+2(F^{(1)})^2-[E^{(1)},F^{(1)}].    
\end{split}
\eeq
Recall the following obvious equalities $[E^{(1)},F^{(1)}]=D_1^{(1)}-D_2^{(1)}$ and $[D_1^{(1)},D_2^{(1)}]=0$ from \eqref{RTT} (or \cite{BK05}). Combining \eqref{app1} and \eqref{app3}, we obtain the second equality.
\end{proof}

\subsection{Type $\mathsf C_2$}\label{sec:app2}
In this section, we establish Part (1) of Theorem \ref{thm:embedmaintext} for type $\mathsf C_2$ using the minimalistic presentation from Theorem \ref{thm:min-text} (with the extra relation \eqref{eq:add-rel}). Since all other relations have been verified in \S \ref{sec:pfthmB0}, it suffices to verify the extra relation \eqref{eq:add-rel} for $i=1$.

Our strategy is as follows. We first construct elements $h_{1,2r+1},b_{1,r}$ for $r\in\bN$ from the twisted Yangian in R-matrix presentation \cite{GR16} and then prove that these elements satisfy the relations \eqref{ty0}--\eqref{ty2} (with $i=j=1$). Finally, we show by a similar calculation as in \S \ref{sec:app} that as elements in $\Y$, $h_{1,1}$ and $b_{1,0}$ are given by  the RHS of \eqref{hi1-embedding}--\eqref{bi0-embedding}, respectively. Then the extra relation \eqref{eq:add-rel} is satisfied due to the same computation in Lemma \ref{lem:hbhb}. 

We only sketch the proof for the first step. Note that we only treat with the first node and hence we do not need to do a rank reduction or a full study of the Gauss decomposition.

Recall the R-matrix presentation of the twisted Yangian of type $\mathsf{CI}_2$ (split type $\mathsf C_2$) from \cite{GR16}, and we adopt the same notation as therein. As in \cite{LWZ25GD,LZ24}, we need to carefully pick a matrix $\mathcal G$. The choice in \cite{GR16} is given by
\[
\mathcal G=\begin{bmatrix}
-1 & 0 & 0 & 0\\
0 & -1 & 0 & 0\\
0 & 0 & 1 & 0\\
0 & 0 & 0 & 1
\end{bmatrix}.
\]
Our choice of the matrix $\mathcal G'$ for Gauss decomposition is 
\[
\mathcal G'=A^t\mathcal G A=\begin{bmatrix}
0 & 0 & 0 & 1\\
0 & 0 & 1 & 0\\
0 & 1 & 0 & 0\\
1 & 0 & 0 & 0
\end{bmatrix},\qquad A=\frac{\sqrt 2}{2}\begin{bmatrix}
1 & 0 & 0 & 1\\
0 & 1 & 1 & 0\\
0 & -1 & 1 & 0\\
-1 & 0 & 0 & 1
\end{bmatrix}.
\]
Note that $t$ stands for the modified transpose defined in \cite[\S2]{GR16}. By \cite[Remark 3.2]{GR16}, these two choices of the matrix $\mathcal G$ give rise to isomorphic twisted Yangians.

Let $t_{ij}(u)$ (resp. $\mathsf s_{ij}(u)$) for $i,j=-2,-1,1,2$ be the generating series of the R-matrix presentation of Yangian (resp. twisted Yangians) of type $\mathsf C_2$ (resp. $\mathsf{CI}_2$), see e.g. \cite[\S2--\S4]{GR16}. Again as in type $\mathsf A$, we use $S(u)=T^t(-u)\mathcal G'T(u)$. Then with the choice of matrix $\mathcal G'$, we have
\[
\mathsf s_{ij}(u+\hf\kappa)=\sum_{a=-2}^{2}\mathrm{sign}(a)\mathrm{sign}(i)t_{-a,-i}(-u)t_{-a,j}(u).
\]
Then we introduce a new matrix $\mathscr{L}(u)=(l_{ij}(u))$, where $l_{ij}(u)=\sfs_{-i,j}(u+\hf\kappa)$ for $i,j=-2,-1,1,2$ (following a similar strategy to \cite{LZ24} for quasi-split type $\mathsf A$). Then it follows from \cite[(4.4)--(4.5)]{GR16} that for $i,j<0$, $l_{ij}(u)$ satisfy the relations \eqref{quater u}--\eqref{sym u} (as all these $\delta$'s in \cite[(4.4)]{GR16} vanish). Hence the submatrix $\mathscr{L}^-(u)=(l_{ij}(u))_{i,j=-2,-1}$ is the $S$-matrix for twisted Yangian of type $\mathsf{AI}$ (rank 1).  Define the generating series $h_1(u),b_1(u)$ via the Gauss decomposition for the submatrix $\mathscr{L}^-(u)$ as in \cite{LWZ25GD} (see also Appendix \ref{sec:app}). Since the generating series $l_{ij}(u)$ for $i,j=-2,-1$ satisfy the same relations as $s_{ij}(u)$ for $i,j=1,2$, we find that the coefficients $h_{1,2r+1},b_{1,r}$ for $r\in\bN$ also satisfy the relations \eqref{ty0}--\eqref{ty2} (with $i=j=1$), see \cite[Proposition 4.3]{LWZ25GD}.

The precise relations between $t_{ab}(u)$ and $\xi_i(u),x_i^\pm(u)$ are described in \cite{JLM18} which are very similar to type $\mathsf A$ in Appendix \ref{sec:app}. The rest of the calculation is very similar to that in Appendix \ref{sec:app}, hence we do not repeat it.

\bibliographystyle{amsalpha}
\bibliography{reference}

\end{document}